\documentclass{amsart}
\usepackage[margin=1in]{geometry}

\usepackage{color,amssymb, comment, mathrsfs,tikz,upgreek,contour, soul, colonequals, mathdots,fancyvrb, amsmath}
\usepackage[colorlinks, linkcolor={blue}]{hyperref}
\usepackage{tikz-cd}
\usepackage{cleveref}
\usetikzlibrary{cd}
\usetikzlibrary{fit,shapes.geometric}
\usetikzlibrary{matrix}
\usetikzlibrary{arrows}
\usepackage{enumitem}
\usepackage{mathtools}
\usepackage{thmtools}
\usepackage[all]{xy}
\usepackage[mathlines,pagewise]{lineno}
\allowdisplaybreaks

\newtheorem{theorem}{Theorem}[section]
\newtheorem*{maintheorem1}{Main Theorem 1}
\newtheorem*{maintheorem2}{Main Theorem 2}
\newtheorem*{maintheorem3}{Main Theorem 3}
\newtheorem*{corollary2}{Corollary}
\newtheorem{lemma}[theorem]{Lemma}
\newtheorem{proposition}[theorem]{Proposition}
\newtheorem{corollary}[theorem]{Corollary}
\theoremstyle{definition}
\newtheorem*{ack}{Acknowledgements}
\newtheorem*{AI}{AI Disclosure}

\usepackage[only,llbracket,rrbracket,llparenthesis,rrparenthesis]{stmaryrd} 
\usepackage{accsupp}

\theoremstyle{definition}
\newtheorem{definition}[theorem]{Definition}

\theoremstyle{remark}
\newtheorem{remark}[theorem]{Remark}

\numberwithin{equation}{section}

\newcommand{\D}{\textsf{\upshape D}}

\newcommand{\kk}{\Bbbk}

\newcommand{\Ext}{\operatorname{Ext}}

\newcommand{\Hom}{\operatorname{Hom}}

\newcommand{\RHom}{\operatorname{\mathbf{R}Hom}}

\newcommand{\HH}[2]{\operatorname{H}_{#1}(#2)}
\newcommand{\Tor}{\operatorname{Tor}}

\newcommand{\m}{\mathfrak{m}}

\newcommand{\x}{\mathbf{x}}
\newcommand{\depth}{\operatorname{depth}}
\newcommand{\width}{\operatorname{width}}

\newcommand{\ld}{\operatorname{lc.dim}}
\newcommand{\wh}{\widehat}

\newcommand{\thick}{\operatorname{thick}}

\newcommand{\fm}{\mathfrak{m}}

\makeatletter
\DeclareFontEncoding{LS2}{}{\@noaccents}
\makeatother
\DeclareFontSubstitution{LS2}{stix}{m}{n}

\DeclareSymbolFont{largesymbolsstix}{LS2}{stixex}{m}{n}

\DeclareMathDelimiter{\lbrbrak}{\mathopen}{largesymbolsstix}{"EE}{largesymbolsstix}{"14}
\DeclareMathDelimiter{\rbrbrak}{\mathclose}{largesymbolsstix}{"EF}{largesymbolsstix}{"15}

\crefname{diagram}{diagram}{diagrams}
\crefname{diagram}{Diagram}{Diagrams}
\creflabelformat{diagram}{(#1) #2 #3}

\renewcommand{\H}{\mathrm{H}}

\newcommand{\lotimes}{\otimes^{\mathbf{L}}}

\newcommand{\catb}{\sqsubset\mspace{-13mu}\sqsupset}
\newcommand{\catbb}{\sqsupset}
\newcommand{\catba}{\sqsubset}
\newcommand{\Catsub}[3]{{\mathsf{#2}}_{#3}(#1)}
\newcommand{\Catsupsub}[4]{{\mathsf{#2}}^{\text{\upshape #3}}_{#4}(#1)}
\newcommand{\Catsup}[3]{{\mathsf{#2}}^{\text{\upshape #3}}(#1)}
\newcommand{\Db}[1][R]{\Catsub{#1}{D}{\catb}}
\newcommand{\Dfb}[1][R]{\Catsupsub{#1}{D}{f}{\catb}}
\newcommand{\Dbb}[1][R]{\Catsub{#1}{D}{\catbb}}
\newcommand{\Dba}[1][R]{\Catsub{#1}{D}{\catba}}
\newcommand{\Dfba}[1][R]{\Catsupsub{#1}{D}{f}{\catba}}
\newcommand{\Dfbb}[1][R]{\Catsupsub{#1}{D}{f}{\catbb}}
\newcommand{\Df}[1][R]{\Catsup{#1}{D}{f}}

\newcommand{\LLam}[2][\mathfrak{m}]{\nobreak{\mathbf{L}\Lambda^{#1}#2}}
\newcommand{\LLamhat}[2][\mathfrak{m}]{\nobreak{\mathbf{L}\widehat{\Lambda}^{#1}#2}}
\newcommand{\LL}[3][\mathfrak{m}]{\nobreak{\mathbf{L}\Lambda^{#1}_{#2}#3}}
\newcommand{\RGam}[2][\mathfrak{m}]{\nobreak{\mathbf{R}\Gamma_{#1}#2}}
\newcommand{\Tel}{\operatorname{Tel}}
\newcommand{\RG}[3][\mathfrak{m}]{\nobreak{\mathbf{R}\Gamma_{#1}^{#2}#3}}
\newcommand{\amp}{\operatorname{amp}}
\newcommand{\projdim}{\operatorname{projdim}}
\newcommand{\flatdim}{\operatorname{flatdim}}
\newcommand{\injdim}{\operatorname{injdim}}

\newcommand{\HR}{\HH{0}{R}}

\keywords{Differential graded rings, width, derived completion,
local homology, amplitude inequalities, Iversen's inequality, regular sequences,
Cohen--Macaulay DG-rings, Bass' conjecture}

\subjclass[2020]{13D22, 16E45, 13D09, 13D45, 13H10}

\author[Ferraro]{Luigi Ferraro}
\address[Luigi Ferraro]{School of Mathematical and Statistical Sciences \\ University of Texas Rio Grande Valley \\ Edinburg, TX 78539, U.S.A}
\email[]{luigi.ferraro@utrgv.edu}

\author[Nason]{Zach Nason}
\address[Zach Nason]{Department of Mathematics\\
University of Nebraska-Lincoln\\
Lincoln, NE 68503}
\email[]{znason2@huskers.unl.edu}

\begin{document}
\title{Width and amplitude inequalities over DG-rings}

\begin{abstract}
We study width over commutative noetherian local differential graded
(DG) rings with bounded homology. We establish a derived width formula
and prove amplitude inequalities for the derived completion of
derived Hom and tensor products. As applications, we obtain Iversen's
amplitude inequality and a criterion ensuring that regular sequences
on a DG-module are regular on the DG-ring. These applications recover
results for DG-modules with finitely generated homology and also hold
for derived complete DG-modules. Finally, we characterize
Cohen--Macaulay DG-rings by the existence of a bounded derived complete
DG-module of finite injective dimension whose amplitude equals that
of the DG-ring. This provides a version of Bass' conjecture for
derived complete DG-modules.
\end{abstract}

\maketitle
\section{Introduction}

For a bounded below complex over a noetherian local ring, the width of the complex is
the lowest nonvanishing degree of its derived tensor product with the
residue field. It is also the lowest nonvanishing degree of its derived
completion. Christensen, Holm and the first author \cite{amp} used width to establish amplitude inequalities for local (co)homology. In this paper, we study width over differential graded
(DG) rings and prove DG counterparts of these inequalities, with
applications to regular sequences and Cohen--Macaulayness.

Throughout this paper, $R$ is a commutative nonnegative noetherian local DG-ring
with bounded homology, and $(\HR,\m,\kk)$ is its degree-zero homology
ring. We use homological grading and write $\Df$ for the DG-modules
with degreewise finitely generated homology; our remaining conventions
are given in Section~2. The width of a bounded below DG-module $M$ is
$\width_RM=\inf(\kk\lotimes_RM)$. We write $\LLam[\m]{-}$ and
$\RGam[\m]{-}$ for derived completion and torsion,
respectively.

In earlier work \cite[Theorem~4.6]{FN}, the authors proved a DG intersection theorem involving the derived torsion of a derived tensor product.
Our first main result gives a dual bound for the derived completion
of a derived Hom complex.

\begin{maintheorem1}[\Cref{thm:4.6dual}]
Let $0\not\simeq X\in\Dfb$ and $Y\in\Dba$ with
$\width_RY<\infty$ and $\injdim_RY<\infty$. Then
\[
\sup\LLam[\m]{(\RHom_R(X,Y))}
\geq\ld_RX+\width_RY-\depth R.
\]
\end{maintheorem1}

Our next main result consists of two amplitude inequalities for
derived completion. They give DG counterparts of the right-hand
inequalities in \cite[Theorems~2.3 and~2.8]{amp}.

\begin{maintheorem2}[\Cref{thm:4.7dual,cor:ampLLam}]
Let $Y\in\Db$.
\begin{enumerate}
\item If $0\not\simeq X\in\Dfba$, $\width_RY<\infty$, and
$\injdim_RY<\infty$, then
\[
\amp\LLam[\m]{(\RHom_R(X,Y))}\geq\amp\RGam[\m]{(X)}.
\]
\item If $X\in\Df$, $\depth_RY<\infty$, and
$\flatdim_RY<\infty$, then
\[
\amp\LLam[\m]{(X\lotimes_RY)}\geq\amp X.
\]
\end{enumerate}
\end{maintheorem2}

The first inequality recovers \cite[Proposition~4.5]{yang} when both
arguments have bounded, finitely generated homology. Derived completion is
essential to this formulation: it allows us to obtain consequences
under either finite generation or derived completeness hypotheses.

In particular, \Cref{cor:iversen} gives a DG version of Iversen's
amplitude inequality.
\begin{corollary2}[\Cref{cor:iversen}]
Let $Y$ be a nonzero bounded DG $R$-module
with finite flat dimension. If $Y$ has degreewise finitely generated
homology or is derived $\m$-complete, then
\[
\amp(X\lotimes_RY)\geq\amp X\qquad\text{for every }X\in\Df.
\]
\end{corollary2}
For bounded DG-modules with finitely generated homology, this was
proved by Yang \cite[Proposition~4.1]{yang}. Our result also treats
derived complete DG-modules, paralleling
\cite[Corollary~2.9]{amp} over local rings. Under the same hypotheses
on $Y$, the additional equality $\amp Y=\amp R$ implies that every
$Y$-regular sequence is $R$-regular; see \Cref{cor:auslander}.
We call such a $Y$ an Auslander DG-module. This gives a DG analogue
of \cite[Example~2.10]{amp} and includes the finite-homology case
of \cite[Remark~4.2(2)]{yang}.

Our final application concerns the DG version of Bass' conjecture.
Following \cite{ShaulCM}, $R$ is Cohen--Macaulay if
$\amp R=\amp\RGam[\m]{(R)}$. The next result characterizes this
property using either finitely generated homology or derived
completeness.

\begin{samepage}
\begin{maintheorem3}[\Cref{cor:bass}]
The following conditions are equivalent:
\begin{enumerate}
\item $R$ is Cohen--Macaulay.
\item There is a $Y\in\Dfb$ with $\injdim_RY<\infty$ and
$\amp Y=\amp R$.
\item There is a derived $\m$-complete $Y\in\Db$ with
$\injdim_RY<\infty$ and $\amp Y=\amp R$.
\end{enumerate}
\end{maintheorem3}
\end{samepage}

The equivalence of the first two conditions follows from
\cite[Theorem~5.22]{ShaulCM} and \cite[Proposition~4.5]{yang}.
The third condition provides a characterization without finite
generation of homology and is a DG counterpart of
\cite[Corollary~2.2]{amp}. For ordinary local rings, amplitude zero
means that $Y$ is a shift of a module, so this specializes to the
classical Bass characterization and its derived complete version.

The paper is organized as follows. Section~2 collects the background
on DG-modules, Koszul complexes, and local (co)homology.
Section~3 develops the properties of width needed in Section~4,
where we prove the inequalities and their applications.

\begin{ack}
Luigi Ferraro was partly supported by the Simons Foundation grant MPS-TSM-00007849.  
\end{ack}

\begin{AI}
ChatGPT 6 Astra identified a few issues in an earlier version of this paper and suggested fixes. All ideas and proofs in this paper were generated by the authors, and AI was used only to check early drafts.
\end{AI}

\section{Background}\label{sec:background}

We collect the conventions and background used throughout the paper.
As in \cite[Section~2]{FN}, we use homological grading.

\subsection*{DG-rings and DG-modules}
A DG-ring is a graded ring $R=\bigoplus_{i\in\mathbb Z}R_i$ with a
differential $\partial$ of degree $-1$ satisfying $\partial^2=0$ and
\[
\partial(rs)=\partial(r)s+(-1)^{|r|}r\partial(s)
\]
for homogeneous elements $r,s\in R$. We use commutativity in the
graded sense: $rs=(-1)^{|r||s|}sr$, and $r^2=0$ when $|r|$ is odd.
Throughout, $R$ is a commutative nonnegative DG-ring with bounded
homology such that $\HR$ is a noetherian local ring with maximal ideal
$\m$ and residue field $\kk$, and each $\HH{i}{R}$ is finitely
generated over $\HR$. We refer to such an $R$ as a DG-ring.

A DG $R$-module is a graded $R$-module $M$ with a differential of
degree $-1$ satisfying $\partial^2=0$ and
$\partial(rm)=\partial(r)m+(-1)^{|r|}r\partial(m)$.
We regard $\HR$-modules as DG $R$-modules concentrated in degree zero
via the canonical map $R\to\HR$.

\subsection*{The derived category}
We write $\D(R)$ for the derived category obtained from the category
$\operatorname{DG}(R)$ of DG $R$-modules and $R$-linear chain maps by
inverting quasi-isomorphisms, and use $\simeq$
for isomorphisms in $\D(R)$. For $M\in\D(R)$, set
\[
\inf M=\inf\{i\mid\HH{i}{M}\neq0\},\qquad
\sup M=\sup\{i\mid\HH{i}{M}\neq0\},\qquad
\amp M=\sup M-\inf M.
\]
For an acyclic DG-module, these values are $\infty$, $-\infty$, and
$-\infty$, respectively. Our suspension convention is
$\HH{i}{\Sigma^sM}=\HH{i-s}{M}$. We use the full subcategories
\[
\Dba=\{M\in\D(R)\mid\sup M<\infty\},\qquad
\Dbb=\{M\in\D(R)\mid\inf M>-\infty\},
\]
and set $\Db=\Dba\cap\Dbb$. The subcategory $\Df$ consists of the
DG-modules whose homology modules are finitely generated over
$\HR$ in every degree. Intersecting $\Df$ with $\Dba$, $\Dbb$,
and $\Db$ gives $\Dfba$, $\Dfbb$, and $\Dfb$, respectively.
In particular, membership in $\Df$ does not impose boundedness.

The derived tensor product and derived Hom are denoted by
$-\lotimes_R-$ and $\RHom_R(-,-)$. Thus
\[
\Tor_i^R(M,N)=\HH{i}{M\lotimes_RN},\qquad
\Ext_R^i(M,N)=\HH{-i}{\RHom_R(M,N)}.
\]
For $M\in\D(R)$, the notation $\thick_RM$ denotes the smallest
full triangulated subcategory of $\D(R)$ containing $M$ and closed
under direct summands. An object $P \in \D(R)$ is compact if the functor $\RHom_R(P, -)$ commutes with arbitrary coproducts. The compact objects of $\D(R)$ are precisely the objects of $\thick_RR$, also called perfect DG
modules; see \cite[Theorem~5.20 and Corollary~5.21]{tilting}. When $R$ is a noetherian local ring, the perfect objects are the complexes with bounded and degreewise finitely generated homology with finite projective dimension.
\subsection*{Homological dimensions}
For $M\in\D(R)$, we use the following definitions:
\begin{align*}
\projdim_RM
&=\inf\{n\in\mathbb Z\mid\Ext_R^i(M,N)=0
  \text{ for all }N\in\Db\text{ and }i>n-\inf N\},\\
\injdim_RM
&=\inf\{n\in\mathbb Z\mid\Ext_R^i(N,M)=0
  \text{ for all }N\in\Db\text{ and }i>n+\sup N\},\\
\flatdim_RM
&=\inf\{n\in\mathbb Z\mid\Tor_i^R(N,M)=0
  \text{ for all }N\in\Db\text{ and }i>n+\sup N\}.
\end{align*}
These agree with the usual dimensions over an ordinary ring; see
\cite[Definition~2.1]{ShaulFinDim}, with the grading reversed.

\subsection*{Depth and local cohomology Krull dimension}
For $M\in\Dba$ and an ideal $I\subseteq\HR$, set
\[
\depth_R(I,M)=-\sup\RHom_R(\HR/I,M),\qquad
\depth_RM=\depth_R(\m,M).
\]
For $M\in\Dbb$, its local cohomology Krull dimension is
\[
\ld_RM=\sup_{i\in\mathbb Z}
\{\dim_{\HR}\HH{i}{M}-i\},
\]
where $\dim_{\HR}0=-\infty$; see
\cite[Definitions~2.1 and~3.1]{ShaulCM} and
\cite[Definition~3.1]{ShaulKoszul}.

\subsection*{Koszul DG-modules}
Let $\x=x_1,\ldots,x_n$ be a sequence in $\HR$, and choose lifts
$\widetilde\x=\widetilde x_1,\ldots,\widetilde x_n$ in $R_0$.
We write
\[
K^R(\x)=R\otimes_{R_0}K^{R_0}(\widetilde\x),
\]
where the Koszul complex on the right is in degrees $0,\ldots,n$.
Its isomorphism class in $\D(R)$ is independent of the lifts; see
\cite[Definition~2.4 and Proposition~2.6]{ShaulKoszul}.
We also write $K(\x)$ when the DG-ring is understood.

When $\x$ generates $\m$, write $K=K(\x)$. The DG-ring $K$ is
noetherian, has bounded homology, and satisfies $\HH{0}{K}=\kk$. Applying
\cite[Corollary~4.3]{ShaulLifting} to $K$ gives
$K\in\thick_K\kk$. Restriction of scalars along $R\to K$
preserves thick constructions by \cite[Lemma~4.1]{ShaulLifting},
so
\[
K\in\thick_R\kk.
\]
We will also use the following consequence of Nakayama's lemma:
for every $M\in\Df$ and every $i\in\mathbb Z$,
\[
\HH{i}{K\lotimes_RM}=0\quad\Longrightarrow\quad\HH{i}{M}=0.
\]
Indeed, for a single element $x\in\m$, the triangle
$M\xrightarrow{x}M\to K(x)\lotimes_RM\to\Sigma M$ gives an injection
\[
\HH{i}{M}/x\HH{i}{M}\longrightarrow\HH{i}{K(x)\lotimes_RM},
\]
and Nakayama's lemma applies. Iterating this argument proves the
claim, since tensoring with a finite Koszul DG-module preserves
degreewise finiteness of homology. Moreover,
$K\lotimes_RM\in\thick_R(\kk\lotimes_RM)$. It follows that
\[
M\not\simeq0\ \Longrightarrow\ \kk\lotimes_RM\not\simeq0,
\qquad
\kk\lotimes_RM\in\Dbb\ \Longrightarrow\ M\in\Dbb.
\]
No boundedness assumption on $M\in\Df$ is needed for these assertions.

\subsection*{Local cohomology and derived completion}
Let $I\subseteq\HR$ be an ideal, and choose a finite sequence
$\widetilde\x$ in $R_0$ whose image generates $I$.
For a ring $S$ and $x\in S$, let
$\check C_S(x)=[S\to S_x]$, in degrees $0$ and $-1$.
The augmented \v Cech complex on a sequence is the tensor product
of these complexes; we denote the one on $\widetilde\x$ over
$R_0$ by $\check C(\widetilde\x)$.

For completeness, the one-element telescope complex is
$\Tel(S;x)=[F\xrightarrow{d}F]$, also in degrees $0$ and $-1$,
where $F=\bigoplus_{j\geq0}Se_j$ and
\[
d(e_0)=e_0,\qquad d(e_j)=e_{j-1}-xe_j\quad(j\geq1).
\]
For a sequence, take the tensor product of the corresponding
one-element telescope complexes. Thus $\Tel(R_0;\widetilde\x)$
is a bounded complex of free $R_0$-modules, quasi-isomorphic to
$\check C(\widetilde\x)$; see \cite[Section~7.1.3]{injective}.

The derived $I$-torsion and derived $I$-completion functors are
computed by
\begin{align*}
\RGam[I]{(M)}
&\simeq\check C(\widetilde\x)\lotimes_{R_0}M
 \simeq\Tel(R_0;\widetilde\x)\otimes_{R_0}M,\\
\LLam[I]{(M)}
&\simeq\RHom_{R_0}(\check C(\widetilde\x),M)
 \simeq\Hom_{R_0}(\Tel(R_0;\widetilde\x),M)
\end{align*}
for $M\in\D(R)$; see
\cite[Proposition~2.4 and Corollary~2.13]{ShaulCompletion}.
These constructions are independent of the chosen lifts and
generators. We include the ring in the notation when needed, writing
$\RG[I]{R}{(M)}$ and $\LL[I]{R}{(M)}$.

The DG-module $M$ is derived $I$-torsion if the canonical morphism
$\RGam[I]{(M)}\to M$ is an isomorphism, and derived $I$-complete if
the canonical morphism $M\to\LLam[I]{(M)}$ is an isomorphism.
The first condition is equivalent to each $\HH{i}{M}$ being
$I$-torsion; see \cite[Proposition~2.9]{ShaulCompletion}.

\subsection*{The completed DG-ring}
We write $\wh R=\LLam[\m]{(R)}$ for the derived $\m$-adic completion
of $R$, equipped with its DG-ring structure as in
\cite[Section~4]{ShaulCompletion}. It is again a noetherian local
DG-ring with bounded homology, and
\[
\HH{0}{\wh R}\cong\wh{\HR},\qquad
\wh\m=\m\HH{0}{\wh R},\qquad
\flatdim_R\wh R=0;
\]
see \cite[Proposition~1.7]{ShaulCM} and
\cite[Corollary~4.6]{ShaulHomDim}.
Here $\wh{\HR}$ is the ordinary $\m$-adic completion of $\HR$,
and $\wh\m$ is its maximal ideal. In particular, the induced map
$\HR\to\HH{0}{\wh R}$ is faithfully flat.

We distinguish the $R$-valued functor $\LLam[\m]{}$ from its
$\wh R$-valued version
$\LLamhat[\m]{}\colon\D(R)\to\D(\wh R)$.
By \cite[Propositions~5.4 and~5.5]{ShaulCompletion},
\[
\LLamhat[\m]{(M)}
\simeq\RHom_R(\wh R,\LLam[\m]{(M)})
\quad\text{in }\D(\wh R),
\]
and restriction of scalars sends $\LLamhat[\m]{(M)}$ to
$\LLam[\m]{(M)}$. Completion of a DG $\wh R$-module at
$\wh\m$ is denoted by $\LL[\wh\m]{\wh R}{}$.

\subsection*{Injective and dualizing DG-modules}
Let $E_{\HR}$ be the injective hull of $\kk$ over $\HR$.
Throughout, $E$ denotes the indecomposable injective DG $R$-module
corresponding to $\m$ under the classification in
\cite[Section~7.2]{injective}. Thus $\HH{0}{E}\cong E_{\HR}$ and
\[
\RHom_R(\HR,E)\simeq E_{\HR}\quad\text{in }\D(\HR).
\]
For every $M\in\D(R)$ and $i\in\mathbb Z$,
\[
\HH{i}{\RHom_R(M,E)}
\cong\Hom_{\HR}(\HH{-i}{M},E_{\HR});
\]
see \cite[Theorem~4.10]{injective}.
Since $E_{\HR}$ is an injective cogenerator, $\RHom_R(-,E)$ detects
nonzero objects and reverses the degrees in which homology is
nonzero. We use $E_{\wh R}$ for the analogous injective DG-module
over $\wh R$, corresponding to $\wh\m$.

A dualizing DG-module for $R$ is an object $D\in\Dfb$ with
$\injdim_RD<\infty$ for which the homothety morphism
$R\to\RHom_R(D,D)$ is an isomorphism; see
\cite[Definition~7.1 and Corollary~7.3]{tilting}.
We call $D$ normalized if $\RHom_R(\kk,D)\simeq\kk$.
Every dualizing DG-module admits a unique normalizing shift;
see \cite[Section~7.2]{injective}.
The completed DG-ring $\wh R$ always has a dualizing DG-module
by \cite[Proposition~7.21]{injective}.

\section{Width}

In this section, we study width for DG-modules and show that its familiar properties over local rings extend to the DG setting. These properties will be used in the next section to prove the main results of this paper.

We recall that by \cite[Definition 5.1]{width}, if $I$ is an ideal of $\HR$ and $M\in\Dbb$, then the $I$-width of $M$ is
\[
\width_R(I,M)=\inf (\HR/I\lotimes_R M).
\]
We set
\[
\width_RM=\width_R(\m,M).
\]

\begin{remark}\label{rmk:yang}
It is proved in \cite[Theorem 5.2]{width} that if $M\in\Dbb$, then
\begin{equation}\label{eq:widthBaseChange}
\width_R(I,M)=\width_{\HR}(I,\HR\lotimes_RM),
\end{equation}
indeed this easily follows from the following equalities
\begin{align*}
\width_R(I,M)&=\inf (\HR/I\lotimes_RM)\\
&=\inf (\HR/I\lotimes_{\HR}(\HR\lotimes_R M))\\
&=\width_{\HR}(I,\HR\lotimes_RM).
\end{align*}
\end{remark}

We first remove a boundedness hypothesis from \cite[Lemma 4.2]{ShaulHomDim}.
\begin{lemma}\label{lem:4.2}
Let $I$ be an ideal of $\HR$ and let $M\in\Dbb$. Then $\LLam[I]{M}\in\Dbb$, and there is a natural isomorphism
\[
\HR\lotimes_R\LL[I]{R}{(M)}\simeq\LL[I]{\HR}{(\HR\lotimes_RM)}
\]
in $\D(\HR)$.
\end{lemma}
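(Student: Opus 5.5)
We work throughout with the telescope model of derived completion. Choose $\widetilde\x=\widetilde x_1,\ldots,\widetilde x_n$ in $R_0$ whose image generates $I$, and write $T=\Tel(R_0;\widetilde\x)$. This is a bounded complex of free $R_0$-modules concentrated in degrees $-n,\ldots,0$, and $\LLam[I]{(M)}\simeq\Hom_{R_0}(T,M)$. The plan has two parts: first show $\LLam[I]{M}\in\Dbb$ directly, then build the comparison map and prove it is an isomorphism.

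\emph{Boundedness below.} Replace $M$ by a quasi-isomorphic DG-module that is zero in degrees below $\inf M$; a truncation does this, since $R$ is nonnegative. Each $T_{-j}$ is a free $R_0$-module, possibly of infinite rank, so $\Hom_{R_0}(T_{-j},M)$ is a product of copies of $M$. A product of copies of $M$ has the same degreewise vanishing as $M$, and taking homology commutes with products. Therefore $\Hom_{R_0}(T,M)$ vanishes in degrees below $\inf M$, and
\[
\inf\LLam[I]{M}\geq\inf M>-\infty .
\]

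\emph{The comparison map.} There is a natural map
\[
\HR\lotimes_R\Hom_{R_0}(T,M)\to\Hom_{R_0}(T,\HR\lotimes_RM).
\]
Its target computes $\LL[I]{\HR}{(\HR\lotimes_RM)}$, because $T$ also serves as a telescope over $\HR$ after base change along $R_0\to\HR$, and $\Hom_{R_0}(T,N)\cong\Hom_{\HR}(\HR\otimes_{R_0}T,N)$ for $N\in\D(\HR)$. To see that the map is an isomorphism, use the standard "way-out" argument on the first variable $\HR$. Choose a semifree resolution $P\to\HR$ over $R$ whose semibasis lies in degrees $\geq0$. For a single free summand $\Sigma^sR$ the map is the identity, and it is compatible with finite direct sums and extensions. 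By \cite[Lemma~4.2]{ShaulHomDim}, the map is an isomorphism when $M$ has bounded homology.

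\emph{Reduction to bounded $M$ (main obstacle).} The difficulty is that $\Hom_{R_0}(T,-)$ commutes with products but not in general with the colimits $P=\varinjlim P^{(k)}$, and $M$ may be unbounded above. We handle this degreewise using the lower bound already established. Fix a degree $i$ and consider the soft truncation triangle $\tau_{\leq N}M\to M\to\tau_{>N}M\to$ for $N\gg i$. Since $T$ lives in degrees $-n,\ldots,0$, we have $\inf\LLam[I]{(\tau_{>N}M)}>N-1$, and both sides applied to $\tau_{>N}M$ stay in degrees $\geq N-n$: the source because $\HR\lotimes_R-$ preserves $\inf$, and the target by the same product argument. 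Hence for $N>i+n+1$, both sides applied to $\tau_{>N}M$ vanish in degrees $i$ and $i+1$. The five lemma then identifies $\HH{i}{}$ of the map for $M$ with $\HH{i}{}$ of the map for the bounded DG-module $\tau_{\leq N}M$, and that map is an isomorphism by \cite[Lemma~4.2]{ShaulHomDim}. Since $i$ was arbitrary, the comparison map is a quasi-isomorphism.

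Naturality in $M$ is clear from the construction. Independence of the choice of $\widetilde\x$ follows from the independence statements recalled in Section~\ref{sec:background}.
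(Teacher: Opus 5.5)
Your argument is correct after one small repair, but it takes a different route from the paper's.

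\textbf{The paper's route.} The paper never passes through the bounded case. It notes that $R\otimes_{R_0}T$ has finite projective dimension \cite[Corollary~2.3(i)]{ShaulFinDim} and writes $\LLam[I]{(M)}\simeq\RHom_R(R\otimes_{R_0}T,M)$. It then applies the tensor-evaluation isomorphism \cite[Lemma~2.7(iv)]{ShaulFinDim} with arguments $R\otimes_{R_0}T$, $M$, $\HR$. Its hypotheses ($\projdim<\infty$, $M\in\Dbb$, $\HR\in\Dfbb$) already allow $M$ to be unbounded above, and adjunction plus base change of the telescope finish.

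The hypothesis $\HR\in\Dfbb$ is what removes the obstacle you identified. Choose the semifree resolution $P$ of $\HR$ with finitely many semibasis elements in each degree. Then, for bounded-below $M$, each homological degree of both sides only sees a finite semifree piece of $P$. So the way-out paragraph you abandoned could actually have been completed.

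\textbf{Your route.} You run the way-out argument in the variable $M$ instead. You bootstrap from \cite[Lemma~4.2]{ShaulHomDim} using truncation and the a priori bound $\inf\LLam[I]{(L)}\geq\inf L$. This shows the lemma is a formal consequence of the bounded case plus that lower bound. The paper's route is a single citation that does not depend on the bounded case.

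\textbf{The repair.} Over a nonnegative DG-ring, the smart truncation keeping the high degrees is the DG-submodule. So the triangle is $\tau_{>N}M\to M\to\tau_{\leq N}M\to\Sigma\tau_{>N}M$, not the one you wrote; in general no triangle with $\tau_{\leq N}M\to M$ exists. Your estimate still works: you now need vanishing in degrees $i$ and $i-1$, and naturality along $M\to\tau_{\leq N}M$ alone suffices.

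\textbf{One more point to state.} Your degreewise comparison needs the isomorphism of \cite[Lemma~4.2]{ShaulHomDim} to be your comparison map, not just some natural isomorphism. Say this explicitly.
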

\begin{proof}
Choose a finite sequence $\x$ in $R_0$ whose image
$\overline{\x}$ in $\HR$ generates $I$, and set
$T=R\otimes_{R_0}\Tel(R_0;\x)$.
By \cite[Corollary 2.3(i)]{ShaulFinDim} $\projdim_RT<\infty$.
The telescope description of completion gives
\[
\LL[I]{R}{(M)}\simeq\Hom_{R_0}(\Tel(R_0;\x),M)
\simeq\RHom_R(T,M);
\]
see \cite[(2.2)]{ShaulHomDim}.
Taking a semifree resolution of $M$ that is bounded below,
as in \cite[Proposition 1.19(1)]{tilting}, shows from the first
description that $\LL[I]{R}{(M)}$ is bounded below.

We now apply the tensor-evaluation isomorphism
\cite[Lemma 2.7(iv)]{ShaulFinDim} with its three arguments equal to
$T$, $M$, and $\HR$, respectively. Its hypotheses hold because
$\projdim_RT<\infty$, $M\in\Dbb$, and $\HR\in\Dfbb$.
Consequently,
\begin{align*}
\HR\lotimes_R\LL[I]{R}{(M)}
&\simeq\RHom_R(T,M)\lotimes_R\HR\\
&\simeq\RHom_R(T,M\lotimes_R\HR)\\
&\simeq\RHom_{\HR}(\HR\lotimes_RT,\HR\lotimes_RM)\\
&\simeq\LL[I]{\HR}{(\HR\lotimes_RM)}.
\end{align*}
The base change identity
$\HR\lotimes_RT\simeq\Tel(\HR;\overline{\x})$
and the telescope description over $\HR$ give the last isomorphism.
\end{proof}

The following result is a DG version of \cite[Theorem 14.4.8]{LarsBook} and provides several alternative ways of computing the width of a DG-module.

\begin{theorem}\label{thm:width}
Let $I$ be an ideal of $\HR$ generated by $\x=x_1,\ldots,x_n$ and $M\in\Dbb$. Then
\begin{align*}
\width_R(I,M)&=\inf\LLam[I]{(M)}\\
&=\inf (K(\x)\lotimes_RM)\\
&=n+\inf\RHom_R(K(\x),M).
\end{align*}
\end{theorem}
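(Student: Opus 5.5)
The plan is to reduce everything to the ordinary local ring $\HR$, where \cite[Theorem 14.4.8]{LarsBook} gives the corresponding equalities for bounded below complexes. Set $N=\HR\lotimes_RM$. Since $R$ is nonnegative and $M\in\Dbb$, choosing a bounded below semifree resolution of $M$ shows $N\in\D_{\catbb}(\HR)$; in fact $\inf N=\inf M$. By \Cref{rmk:yang}, $\width_R(I,M)=\width_{\HR}(I,N)$, so the classical theorem yields
\[
\width_R(I,M)=\inf\LL[I]{\HR}{(N)}=\inf(K^{\HR}(\overline{\x})\lotimes_{\HR}N)=n+\inf\RHom_{\HR}(K^{\HR}(\overline{\x}),N).
\]
It remains to match each of these three quantities with the corresponding DG quantity over $R$.

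\emph{The Koszul terms.} Since $K^R(\x)=R\otimes_{R_0}K^{R_0}(\widetilde\x)$ is semifree and $\HR\otimes_R K^R(\x)\cong K^{\HR}(\overline{\x})$, associativity gives
\[
\HR\lotimes_R(K(\x)\lotimes_RM)\simeq K^{\HR}(\overline{\x})\lotimes_{\HR}N.
\]
For the Hom term, the Koszul complex is self-dual, $\RHom_R(K(\x),M)\simeq\Sigma^{-n}K(\x)\lotimes_RM$, which one sees by dualizing the bounded complex $K^{R_0}(\widetilde\x)$ of finite free modules. Hence the third expression equals the second, and it suffices to treat $\inf\LLam[I]{(M)}$ and $\inf(K(\x)\lotimes_RM)$.

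\emph{The key fact.} For any $L\in\Dbb$ one has $\inf(\HR\lotimes_RL)=\inf L$. This holds because a semifree resolution of $L$ can be chosen with generators in degrees at least $\inf L$, so $\HR\lotimes_R L$ vanishes below $\inf L$, and in degree $\inf L$ the right exactness of tensor products gives
\[
\HH{\inf L}{\HR\lotimes_RL}\cong\HR\otimes_{\HR}\HH{\inf L}{L}=\HH{\inf L}{L}\neq0.
\]
The same computation also covers acyclic $L$.

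\emph{Conclusion.} Apply the key fact to $L=K(\x)\lotimes_RM$, which lies in $\Dbb$ because $K(\x)$ is bounded and semifree. This gives $\inf(K(\x)\lotimes_RM)=\inf(K^{\HR}(\overline{\x})\lotimes_{\HR}N)$. Next apply it to $L=\LLam[I]{(M)}$, which lies in $\Dbb$ by \Cref{lem:4.2}. Combined with the isomorphism $\HR\lotimes_R\LLam[I]{(M)}\simeq\LL[I]{\HR}{(N)}$ from \Cref{lem:4.2}, this gives $\inf\LLam[I]{(M)}=\inf\LL[I]{\HR}{(N)}$. Together with the classical equalities, all four quantities agree.

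The main obstacle is making sure the classical result \cite[Theorem 14.4.8]{LarsBook} applies to complexes that are only bounded below, with no finiteness assumptions. If that version is not available, the fallback is to prove the local-ring statement directly. First, $K^{\HR}(\overline{\x})\in\thick(\HR/I)$ and $\HR/I$ is built from $K^{\HR}(\overline{\x})$, which gives the equality $\width(I,N)=\inf(K\lotimes N)$ by a Nakayama-type lowest-degree argument. Second, $\LL[I]{\HR}{}$ is computed by the telescope as a homotopy limit of the complexes $K(\overline{\x}^t)\lotimes N$; here the Mittag-Leffler property ensures no $\lim^1$ term appears in the lowest degree.
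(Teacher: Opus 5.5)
Your proof is correct. For the first two equalities it is the paper's argument. You reduce to $\HR$ via \Cref{rmk:yang} and apply \cite[Theorem 14.4.8]{LarsBook} to $N=\HR\lotimes_RM$. You then transfer back using \Cref{lem:4.2}, the base change $\HR\otimes_RK^R(\x)\cong K^{\HR}(\x)$, and the equality $\inf(\HR\lotimes_RL)=\inf L$ for $L\in\Dbb$. The paper cites this last equality as \cite[(1.3)]{ShaulCM}, whereas you reprove it from a bounded below semifree resolution.

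The one real divergence is the third equality. The paper transfers the classical characterization $\width_{\HR}(I,N)=n+\inf\RHom_{\HR}(K^{\HR}(\x),N)$ back to $R$ by adjunction, Hom cancellation, and the tensor-evaluation isomorphism of \cite[Lemma 2.7]{ShaulFinDim}. In this way all three expressions are matched with their classical counterparts in parallel. You instead derive the third equality from the second via Koszul self-duality, $\RHom_R(K(\x),M)\simeq\Sigma^{-n}(K(\x)\lotimes_RM)$. This holds for every $M$ and only needs evaluation for the bounded finite free complex $K^{R_0}(\widetilde\x)$. Your route is shorter, and the paper itself uses exactly this self-duality (via \cite[Proposition 2.12]{ShaulKoszul}) in the proof of \Cref{prp:14.4.1}.

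Your fallback sketch is not needed. The paper applies \cite[Theorem 14.4.8]{LarsBook} to bounded below complexes over $\HR$, just as your main argument does.
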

\begin{proof} 
We first recall that by \Cref{rmk:yang} one has
\[
\width_R(I,M)=\width_{\HR}(I,\HR\lotimes_RM).
\]
We start with the first equality
\begin{align*}
\width_{\HR}(I,\HR\lotimes_RM)&=\inf\LL[I]{\HR}{(\HR\lotimes_RM)}\\
&=\inf (\HR\lotimes_R\LL[I]{R}{M})\\
&=\inf\LL[I]{R}{(M)},
\end{align*}
where the first equality follows from \cite[Theorem 14.4.8]{LarsBook}, the second from \Cref{lem:4.2}, and the last from \cite[(1.3)]{ShaulCM}.

We now prove that $\width_R(I,M)=\inf (K(\x)\lotimes_RM)$:
\begin{align*}
\width_{\HR}(I,\HR\lotimes_RM)&=\inf (K^{\HR}(\x)\lotimes_{\HR}\HR\lotimes_RM)\\
&=\inf (K^{\HR}(\x)\lotimes_RM)\\
&=\inf (K^R(\x)\lotimes_R\HR\lotimes_RM)\\
&=\inf (K^R(\x)\lotimes_RM),
\end{align*}
where the first equality follows from \cite[Theorem 14.4.8]{LarsBook}, the second is tensor cancellation, the third equality comes from \cite[Proposition 2.9]{ShaulKoszul} and the fourth from \cite[(1.3)]{ShaulCM}.

Now the final equality
\begin{align*}
\width_{\HR}(I,\HR\lotimes_RM)&=n+\inf\RHom_{\HR}(K^{\HR}(\x),\HR\lotimes_RM)\\
&=n+\inf\RHom_{\HR}(K^{R}(\x)\lotimes_R\HR,\HR\lotimes_RM)\\
&=n+\inf\RHom_{R}(K^{R}(\x),\RHom_{\HR}(\HR,\HR\lotimes_RM))\\
&=n+\inf\RHom_{R}(K^{R}(\x),\HR\lotimes_RM)\\
&=n+\inf\RHom_{R}(K^{R}(\x),M)\lotimes_R\HR\\
&=n+\inf\RHom_{R}(K^{R}(\x),M),
\end{align*}
where the first equality follows from \cite[Theorem 14.4.8]{LarsBook}, the second is \cite[Proposition 2.9]{ShaulKoszul}, the third is tensor-hom adjunction, the fourth is hom-cancellation, the fifth follows from the tensor-evaluation part~(i) of \cite[Lemma~2.7]{ShaulFinDim}, and the final equality from \cite[(1.3)]{ShaulCM}.
\end{proof}

The next result is a DG version of \cite[Proposition 14.3.27]{LarsBook}. Its main purpose is to show that depth and width are simultaneously finite. 

\begin{proposition}\label{prp:finiteDepth} Let $M\in\Db$ and let $I$ be an ideal of $\HR$ generated by $\x=x_1,\ldots,x_n$. Then the following are equivalent
\begin{enumerate}
\item $\depth_R(I,M)<\infty$.
\item $K(\x)\lotimes_RM$ is not acyclic.
\item $\width_R(I,M)<\infty$. 
\end{enumerate}
\end{proposition}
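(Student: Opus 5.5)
The plan is to show (2)$\Leftrightarrow$(3) using \Cref{thm:width}, and (1)$\Leftrightarrow$(2) using the self-duality of the Koszul DG-module.

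\textbf{(2)$\Leftrightarrow$(3).} Since $M\in\Db\subseteq\Dbb$, \Cref{thm:width} gives
\[
\width_R(I,M)=\inf(K(\x)\lotimes_RM).
\]
The DG-module $K(\x)\lotimes_RM$ is bounded below. Its infimum is therefore finite exactly when it is not acyclic, which is the equivalence of (2) and (3).

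\textbf{(1)$\Leftrightarrow$(2).} First I would check the standard Koszul self-duality
\[
\RHom_R(K(\x),M)\simeq\Sigma^{-n}K(\x)\lotimes_RM.
\]
It holds because $K(\x)=R\otimes_{R_0}K^{R_0}(\widetilde\x)$ is a finite semifree DG-module and $\Hom_{R_0}(K^{R_0}(\widetilde\x),R_0)\cong\Sigma^{-n}K^{R_0}(\widetilde\x)$. (Alternatively, compare the last two expressions in \Cref{thm:width}; but the isomorphism itself is what we need.) Consequently, (2) is equivalent to $\RHom_R(K(\x),M)\not\simeq0$.

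It remains to show that $\RHom_R(\HR/I,M)\not\simeq0$ if and only if $\RHom_R(K(\x),M)\not\simeq0$. This uses the finiteness of depth: since $M\in\Dba$, the complex $\RHom_R(\HR/I,M)$ is bounded above. Hence its supremum is finite exactly when it is nonzero, so (1) says precisely that $\RHom_R(\HR/I,M)\not\simeq0$.

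\textbf{Comparing $\HR/I$ and $K(\x)$.} I would use thick subcategories. Each homology module $\HH{i}{K(\x)}$ is a finitely generated $\HR$-module annihilated by $I$. It therefore has a finite filtration with subquotients that are quotients of $\HR/I$. That is not quite enough, so instead I would use the Postnikov/truncation argument. The object $K(\x)$ has bounded homology, so it lies in the thick subcategory generated by its homology modules, regarded as DG $R$-modules via $R\to\HR$.

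Next, a finitely generated $\HR/I$-module $N$ has a resolution by finite free $\HR/I$-modules. Over the noetherian ring $\HR/I$, if $\RHom_R(\HR/I,M)\simeq0$, then
\[
\RHom_R(N,M)\simeq\RHom_{\HR/I}\bigl(N,\RHom_R(\HR/I,M)\bigr)\simeq0.
\]
Here adjunction along $R\to\HR/I$ is used, and no thick-generation is needed at all. Combining this with the truncation triangles gives the implication
\[
\RHom_R(\HR/I,M)\simeq0\ \Longrightarrow\ \RHom_R(K(\x),M)\simeq0.
\]

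For the converse, I would use that $\HR/I$ lies in the localizing (indeed thick) subcategory generated by $K(\x)$ in the $I$-torsion sense. Concretely, $\HR/I$ is derived $I$-torsion, and derived $I$-torsion objects are built from $K(\x)$, as in \cite{ShaulCompletion}. An alternative route is to reduce to $\HR$ by base change. Since
\[
\RHom_R(\HR/I,M)\simeq\RHom_{\HR}\bigl(\HR/I,\RHom_R(\HR,M)\bigr)
\]
and
\[
\RHom_R(K(\x),M)\simeq\RHom_{\HR}\bigl(K^{\HR}(\x),\RHom_R(\HR,M)\bigr),
\]
the question becomes the classical statement for the $\HR$-complex $\RHom_R(\HR,M)$. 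That complex is bounded above, and the classical result is \cite[Proposition 14.3.27]{LarsBook}, in the form that $\RHom(\HR/I,-)$ and $\RHom(K(\x),-)$ vanish simultaneously. I would take this base-change route, since the classical statement holds for bounded-above complexes.

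\textbf{Main obstacle.} The main obstacle is the converse direction. One must make sure the classical equivalence applies to $\RHom_R(\HR,M)$, which need not be bounded below; equivalently, one must justify the generation of $\HR/I$ from $K(\x)$ under colimits while $\RHom(-,M)$ converts these colimits into limits. I expect the cleanest way to be the following. Use
\[
\RHom_R(\RGam[I]{R},M)\simeq\LLam[I]{M},
\]
and note that both vanishing conditions are equivalent to $\LLam[I]{M}\simeq0$, since $\HR/I$ and $K(\x)$ both generate the $I$-torsion localizing subcategory.
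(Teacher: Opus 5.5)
Your proof is correct, but for (1)$\Leftrightarrow$(2) it takes a different route from the paper. For (2)$\Leftrightarrow$(3) it coincides with the paper's argument: \Cref{thm:width}, plus the fact that $\inf$ is finite exactly for non-acyclic objects.

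For (1)$\Leftrightarrow$(2) the paper simply cites \cite[Proposition~3.17]{ShaulKoszul}, the DG depth sensitivity of the Koszul complex. That result says (1) holds iff $\sup(K(\x)\lotimes_RM)>-\infty$. You instead reprove the qualitative part of it. Koszul self-duality rewrites (2) as $\RHom_R(K(\x),M)\not\simeq0$. Adjunction along $R\to\HR$, with $\HR\lotimes_RK^R(\x)\simeq K^{\HR}(\x)$, then reduces the question to the classical comparison for $N=\RHom_R(\HR,M)$.

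Two remarks on your version:
\begin{enumerate}
\item Once the reduction is made, the classical equivalence gives both implications. Your truncation argument for one direction is correct but redundant.
\item As you noticed, $N$ is bounded above but usually not bounded below. So the classical input must be the Koszul computation of depth for bounded-above complexes, $\depth_{\HR}(I,N)=n-\sup(K^{\HR}(\x)\lotimes_{\HR}N)$ for $N\in\Dba[\HR]$. A statement requiring two-sided boundedness, such as the classical analogue of this proposition, does not apply. Your localizing-subcategory alternative avoids this issue: $\HR/I$ and $K(\x)$ generate the same localizing subcategory (the derived $I$-torsion objects), so $\RHom_R(-,M)$ vanishes on one iff it vanishes on the other.
\end{enumerate}

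The paper's route is shorter, and the cited result gives the numerical formula rather than just simultaneous vanishing. Yours is self-contained modulo classical facts, and it shows that (1)$\Leftrightarrow$(2) needs only $M\in\Dba$.
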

\begin{proof}
By \cite[Proposition 3.17]{ShaulKoszul} (1) is equivalent to $\sup (K(\x)\lotimes_RM)>-\infty$, i.e. $K(\x)\lotimes_RM$ not acyclic. By \Cref{thm:width} (3) is also equivalent to (2). 
\end{proof}

The next result is a DG version of \cite[Theorem 14.3.28]{LarsBook} and relates the width of a DG-module to its infimum.

\begin{theorem}\label{thm:widthInf}
Let $I$ be a proper ideal of $\HR$ and $M\in\Dbb$, then $\width_R(I,M)\geq\inf M$. Moreover if $M$ is derived $I$-complete or if $M\in\Dfbb[R]$, then $\width_R(I,M)=\inf M$.
\end{theorem}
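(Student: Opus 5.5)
We reduce to the ordinary ring $\HR$ via \Cref{rmk:yang} and then use the inf-preserving properties of $\HR\lotimes_R-$ and of derived completion.

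\textbf{The inequality.} By \eqref{eq:widthBaseChange}, $\width_R(I,M)=\inf(\HR/I\lotimes_{\HR}N)$ with $N=\HR\lotimes_RM\in\D_{\catbb}(\HR)$. By \cite[(1.3)]{ShaulCM}, $\inf N=\inf M$ and $\HH{\inf M}{N}\cong\HH{0}{R}\otimes_{\HR}\HH{\inf M}{M}=\HH{\inf M}{M}$. The classical bound over $\HR$ is $\inf(\HR/I\lotimes_{\HR}N)\geq\inf N$. To see it, take a semiprojective resolution of $N$ concentrated in degrees $\geq\inf N$. This gives $\width_R(I,M)\geq\inf M$. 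If $M$ is acyclic, both sides are $\infty$ and there is nothing to prove.

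\textbf{The complete case.} Suppose $M$ is derived $I$-complete and not acyclic, and set $s=\inf M$. By \Cref{thm:width}, $\width_R(I,M)=\inf(K(\x)\lotimes_RM)$. Using the same degree argument with the Koszul complex over $\HR$, or directly, the lowest homology is
\[
\HH{s}{K(\x)\lotimes_RM}\cong\HH{s}{M}/I\HH{s}{M}.
\]
So we must show $\HH{s}{M}\neq I\HH{s}{M}$. By \Cref{lem:4.2} and completeness,
\[
N\simeq\HR\lotimes_R\LL[I]{R}{M}\simeq\LL[I]{\HR}{(N)},
\]
so $N$ is a derived $I$-complete complex over $\HR$ with $\inf N=s$. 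Since $N\in\D_{\catbb}(\HR)$ and $\HR$ is noetherian, the lowest homology $\HH{s}{N}$ of a derived $I$-complete complex is itself derived $I$-complete. This follows from the spectral sequence or truncation argument for $\LL[I]{\HR}{}$, as in \cite[Theorem 14.3.28]{LarsBook}, which is exactly the ring case. Alternatively, we apply \cite[Theorem 14.3.28]{LarsBook} directly to $N$:
\[
\width_{\HR}(I,N)=\inf N=\inf M,
\]
and conclude by \eqref{eq:widthBaseChange}. The latter route is cleaner.

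\textbf{The finitely generated case.} Suppose $M\in\Dfbb$. Then $\HH{s}{M}$ is a nonzero finitely generated $\HR$-module and $I\subseteq\m$. By Nakayama's lemma, $\HH{s}{M}/I\HH{s}{M}\neq0$, so the displayed lowest homology is nonzero and $\width_R(I,M)=s$. Equivalently, $N\in\D^{f}_{\catbb}(\HR)$, and the classical result applies to $N$.

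\textbf{Main obstacle.} The main obstacle is the complete case: verifying that derived $I$-completeness of $M$ passes to $N=\HR\lotimes_RM$ over $\HR$. This is exactly what \Cref{lem:4.2} supplies, once we check that the isomorphism there is compatible with the canonical completion maps. That compatibility follows because the isomorphism is natural, being built from tensor evaluation on the map $M\to\RHom_R(T,M)$.
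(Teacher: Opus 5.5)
Your proof is correct. For the inequality and for the derived $I$-complete case you follow the paper's route exactly. You reduce to $N=\HR\lotimes_RM$ over $\HR$ via \eqref{eq:widthBaseChange} and \cite[(1.3)]{ShaulCM}, use \Cref{lem:4.2} to see that $N$ is derived $I$-complete, and invoke \cite[Theorem 14.3.28]{LarsBook}.

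The compatibility check you flag as the main obstacle is not actually needed. Any isomorphism $N\simeq\LL[I]{\HR}{(N)}$, natural or not, already makes $N$ derived $I$-complete. This is because $\LL[I]{\HR}{(N)}$ is derived $I$-complete by idempotence of $\LL[I]{\HR}{}$, and derived completeness is invariant under isomorphism by naturality of the unit.

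The finitely generated case is where you genuinely differ. The paper transfers the hypothesis to $\HR$: it asserts $\HR\lotimes_RM\in\Dfbb[\HR]$ by an argument modeled on \cite[Theorem 2.13]{tilting}, and then cites the classical result. You instead stay over $R$. You use the Koszul description from \Cref{thm:width}, identify the bottom homology $\HH{s}{K(\x)\lotimes_RM}\cong\HH{s}{M}/I\HH{s}{M}$ through the iterated cones, and apply Nakayama.

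Your route is more self-contained, since it avoids checking finite generation of every homology module of $\HR\lotimes_RM$. It also proves a bit more: equality holds as soon as the single module $\HH{\inf M}{M}$ is finitely generated, or indeed whenever $I\HH{\inf M}{M}\neq\HH{\inf M}{M}$. The paper's route has the advantage of handling both cases uniformly by one transfer to $\HR$ and one citation.
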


\begin{proof}
The inequality follows from 
\begin{align*}
\width_R(I,M)&=\width_{\HR}(I,\HR\lotimes_RM)\\
&\geq\inf (\HR\lotimes_RM)\\
&=\inf M,
\end{align*}
where the first equality is from \Cref{rmk:yang}, the inequality from \cite[Theorem 14.3.28]{LarsBook}, and the last equality from \cite[(1.3)]{ShaulCM}.

If $M$ is derived $I$-complete in $\D(R)$, then $\HR\lotimes_RM$ is derived $I$-complete in $\D(\HR)$, this follows directly from \Cref{lem:4.2}. Similarly, if $M\in\Dfbb$, then $\HR\lotimes_RM\in\Dfbb[\HR]$ by an argument similar to the proof of \cite[Theorem 2.13]{tilting}. Therefore, under these hypotheses we can argue as in the previous paragraph to get
\begin{align*}
\width_R(I,M)&=\width_{\HR}(I,\HR\lotimes_RM)\\
&=\inf (\HR\lotimes_RM)\\
&=\inf M.\qedhere
\end{align*}
\end{proof}

The next result is a DG version of \cite[Proposition 16.2.22]{LarsBook} and shows the behavior of depth and width under Matlis duality.

\begin{proposition}\label{prp:widthDual}
Let $X\in\D(R)$.
\begin{enumerate}
\item If $X\in\Dba$, then $\depth_RX=\width_R\RHom_R(X,E)$.
\item If $X\in\Dbb$, then $\width_RX=\depth_R\RHom_R(X,E)$.
\end{enumerate}
\end{proposition}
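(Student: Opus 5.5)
Both parts reduce to adjunction isomorphisms together with the homology formula for $\RHom_R(-,E)$ from \cite[Theorem~4.10]{injective}. That formula says $\HH{i}{\RHom_R(N,E)}\cong\Hom_{\HR}(\HH{-i}{N},E_{\HR})$. Since $E_{\HR}$ is an injective cogenerator, this gives
\[
\sup\RHom_R(N,E)=-\inf N\quad\text{and}\quad\inf\RHom_R(N,E)=-\sup N
\]
for every $N\in\D(R)$. We use this repeatedly.

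For (2), let $X\in\Dbb$. Then $\RHom_R(X,E)\in\Dba$, so its depth is defined. Hom-tensor adjunction gives
\[
\RHom_R(\kk,\RHom_R(X,E))\simeq\RHom_R(\kk\lotimes_RX,E).
\]
Taking suprema and applying the formula above with $N=\kk\lotimes_RX$ yields
\[
\depth_R\RHom_R(X,E)=-\sup\RHom_R(\kk\lotimes_RX,E)=\inf(\kk\lotimes_RX)=\width_RX.
\]

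For (1), let $X\in\Dba$. Then $\RHom_R(X,E)\in\Dbb$, so its width is defined. We need to compare $\kk\lotimes_R\RHom_R(X,E)$ with $\RHom_R(\RHom_R(\kk,X),E)$. Replacing $\kk$ directly requires tensor evaluation with $\kk$, and that needs finiteness hypotheses on $\kk$ that we lack. Instead we pass to the Koszul complex $K=K(\x)$ on generators of $\m$, which is perfect, and use \Cref{thm:width}:
\[
\width_R\RHom_R(X,E)=\inf(K\lotimes_R\RHom_R(X,E)).
\]
Because $K$ is compact, tensor evaluation holds with no boundedness hypotheses:
\[
K\lotimes_R\RHom_R(X,E)\simeq\RHom_R(\RHom_R(K,X),E).
\]
This can be checked on $R$ and extended through $\thick_RR$. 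Applying the formula above with $N=\RHom_R(K,X)$ gives
\[
\inf(K\lotimes_R\RHom_R(X,E))=-\sup\RHom_R(K,X).
\]
It remains to show $-\sup\RHom_R(K,X)=\depth_RX$. Here we use the self-duality $\RHom_R(K,X)\simeq\Sigma^{-n}K\lotimes_RX$ of the Koszul complex. Combined with \cite[Proposition~3.17]{ShaulKoszul} (the Koszul computation of depth, $\depth_RX=n-\sup(K\lotimes_RX)$), this gives
\[
-\sup\RHom_R(K,X)=-\sup(K\lotimes_RX)+n=\depth_RX.
\]

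The main obstacle is this last identification of depth via $\RHom_R(K,X)$ for $X\in\Dba$ that need not lie in $\Df$. If \cite[Proposition~3.17]{ShaulKoszul} only gives the criterion for finiteness of depth rather than the formula itself, the fallback is to reduce to $\HR$. One rewrites
\[
\RHom_R(\kk,X)\simeq\RHom_{\HR}(\kk,\RHom_R(\HR,X))
\]
and applies the classical formula over $\HR$, \cite[Proposition~16.2.22]{LarsBook}, to $\RHom_R(\HR,X)$. Then, using the analogue of \cite[(1.3)]{ShaulCM} for $\sup$ under $\RHom_R(\HR,-)$, one matches the two sides. The infinite cases ($\pm\infty$) follow from the same identities, since all the isomorphisms hold in $\D(R)$ regardless of boundedness.
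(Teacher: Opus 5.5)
Your proof is correct, but it takes a different route from the paper. The paper proves both parts by passing to $\HR$ and quoting the local-ring result. For (1) it rewrites $\width_R\RHom_R(X,E)$ as $\width_{\HR}(\HR\lotimes_R\RHom_R(X,E))$ and applies hom-evaluation \cite[Lemma~2.7(i)]{ShaulFinDim} with $\HR$ in the first slot. Adjunction and $\RHom_R(\HR,E)\simeq E_{\HR}$ then give $\width_{\HR}\RHom_{\HR}(\RHom_R(\HR,X),E_{\HR})$, and it finishes with \cite[Proposition~16.2.22]{LarsBook} over $\HR$. Part (2) is the mirror argument through $\RHom_R(\HR,-)$ and \cite[Proposition~3.2]{ShaulCM}.

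You never leave $R$. Everything rests on the degreewise formula of \cite[Theorem~4.10]{injective}, so (2) is a single adjunction with no classical input, which is shorter than the paper's argument. For (1) you go through the Koszul complex: \Cref{thm:width}, hom-evaluation for the compact object $K$, self-duality, and the Koszul formula $\depth_RX=n-\sup(K\lotimes_RX)$ from \cite[Proposition~3.17]{ShaulKoszul}. The paper itself uses that formula in this form in \Cref{prp:14.4.1}.

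The reason you give for the Koszul detour is mistaken, though. Since $\kk$ is a finitely generated $\HR$-module, $\kk\in\Dfb$. So the hom-evaluation the paper applies to $\HR$ applies verbatim to $\kk$, with $X\in\Dba$ and $E$ of finite injective dimension. This gives $\kk\lotimes_R\RHom_R(X,E)\simeq\RHom_R(\RHom_R(\kk,X),E)$, hence $\width_R\RHom_R(X,E)=-\sup\RHom_R(\kk,X)=\depth_RX$ in one line, exactly parallel to your (2). With that, the whole proposition follows from \cite[Theorem~4.10]{injective} and two evaluation/adjunction isomorphisms, shorter than either the paper's argument or your Koszul one.

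Your fallback paragraph is unnecessary and somewhat confused. \cite[Proposition~16.2.22]{LarsBook} is the Matlis duality statement, not a Koszul formula for depth. Matching it back to $\width_R\RHom_R(X,E)$ needs hom-evaluation with $\HR$, the very kind of step you were trying to avoid, and it is precisely what the paper's proof does.
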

\begin{proof}

\begin{enumerate}
\item The assertion follows from the string of equalities below
\begin{align*}
\width_R\RHom_R(X,E)&=\width_{\HR}\RHom_R(X,E)\lotimes_R\HR\\
&=\width_{\HR}\RHom_R(\RHom_R(\HR,X),E)\\
&=\width_{\HR}\RHom_{\HR}(\RHom_R(\HR,X),\RHom_R(\HR,E))\\
&=\width_{\HR}\RHom_{\HR}(\RHom_R(\HR,X),E_{\HR})\\
&=\depth_{\HR}\RHom_R(\HR,X)\\
&=\depth_RX,
\end{align*}
where the first equality is from \eqref{eq:widthBaseChange}, the second from the hom-evaluation part~(i) of \cite[Lemma~2.7]{ShaulFinDim}, the third one is tensor-hom adjunction, the fourth one is from the definition of $E$ (see \cite{injective}), the fifth from \cite[Proposition 16.2.22]{LarsBook}, and the last one is standard.

\item The assertion follows from the string of equalities below
\begin{align*}
\depth_R\RHom_R(X,E)&=\depth_{\HR}\RHom_R(\HR,\RHom_R(X,E))\\
&=\depth_{\HR}\RHom_R(X,\RHom_R(\HR,E))\\
&=\depth_{\HR}\RHom_R(X,E_{\HR})\\
&=\depth_{\HR}\RHom_{\HR}(X\lotimes_R\HR,E_{\HR})\\
&=\width_{\HR}(X\lotimes_R\HR)\\
&=\width_RX,
\end{align*}
where the first equality follows from \cite[Proposition 3.2]{ShaulCM}, the second equality follows by applying tensor-Hom adjunction twice, the third equality follows from the definition of $E$ (see \cite{injective}), the fourth is tensor-Hom adjunction, the fifth follows from \cite[Proposition 16.2.22]{LarsBook}, and the sixth from \eqref{eq:widthBaseChange}. \qedhere
\end{enumerate}
\end{proof}

The next result is a DG version of \cite[Proposition 18.2.30]{LarsBook} and shows the behavior of depth and width under Grothendieck duality.

\begin{proposition}\label{prp:widthGDual}
Let $X\in\D(R)$ and $D$ a normalized dualizing DG $R$-module.
\begin{enumerate}
\item If $X\in\Dba$, then $\depth_RX=\width_R\RHom_R(X,D)$.
\item If $X\in\Dbb$, then $\width_RX=\depth_R\RHom_R(X,D)$.
\end{enumerate}
\end{proposition}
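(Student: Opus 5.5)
The strategy is to reduce to Matlis duality (\Cref{prp:widthDual}) via local duality. Let $D$ be normalized dualizing. The key identity to establish is
\[
\RGam[\m]{(D)}\simeq \Sigma^{d}E
\]
for an appropriate shift $d$; with the normalization $\RHom_R(\kk,D)\simeq\kk$, we expect $d=0$. To see this, note that $\RGam[\m]{D}$ is derived $\m$-torsion, and
\[
\RHom_R(\kk,\RGam[\m]{D})\simeq\RHom_R(\kk,D)\simeq\kk,
\]
since $\kk$ is $\m$-torsion. The injective dimension of $\RGam[\m]{D}$ is finite because $\RGam[\m]{}$ is computed by tensoring with the bounded flat complex $\Tel$. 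By the classification of injectives in \cite{injective}, $\RGam[\m]{D}\simeq E$. If the paper's conventions make this delicate, one can instead cite the local duality statement for DG-rings in \cite{injective} or \cite{ShaulCM} directly.

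Given this, for $X\in\Dfb$ we have $\RHom_R(X,D)$ in $\Dfb$, so it is derived complete. The plan, however, is to avoid finiteness of $X$, since only $X\in\Dba$ or $X\in\Dbb$ is assumed.

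For (1), with $X\in\Dba$, I would use \Cref{thm:width} to write
\[
\width_R\RHom_R(X,D)=\inf(K\lotimes_R\RHom_R(X,D)).
\]
Since $K$ is perfect, tensor evaluation gives
\[
K\lotimes_R\RHom_R(X,D)\simeq\RHom_R(\RHom_R(K,R),\RHom_R(X,D))\simeq\RHom_R(X\lotimes_R K^{*},D),
\]
where $K^{*}=\RHom_R(K,R)\simeq\Sigma^{-n}K$ by Koszul self-duality. Now $X\lotimes_R K^*$ has $\m$-torsion homology, because $K\in\thick_R\kk$. For a torsion object $Z$,
\[
\RHom_R(Z,D)\simeq\RHom_R(Z,\RGam[\m]{D})\simeq\RHom_R(Z,E).
\]
Running the same chain backwards with $E$ in place of $D$ gives
\[
K\lotimes_R\RHom_R(X,D)\simeq K\lotimes_R\RHom_R(X,E).
\]
Taking infima and applying \Cref{thm:width} yields $\width_R\RHom_R(X,D)=\width_R\RHom_R(X,E)$, and \Cref{prp:widthDual}(1) then gives $\depth_RX$. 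One boundedness condition is needed for \Cref{thm:width}: $\RHom_R(X,D)\in\Dbb$, which holds because $X\in\Dba$ and $D$ is bounded.

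For (2), with $X\in\Dbb$, the argument is similar. Depth is detected by $\RHom_R(K,-)$ up to a shift of $n$, by \cite[Proposition 3.17]{ShaulKoszul} or its analogue. Adjunction gives
\[
\RHom_R(K,\RHom_R(X,D))\simeq\RHom_R(K\lotimes_RX,D).
\]
Since $K\lotimes_RX$ is $\m$-torsion, this equals $\RHom_R(K\lotimes_RX,E)\simeq\RHom_R(K,\RHom_R(X,E))$. Hence $\depth_R\RHom_R(X,D)=\depth_R\RHom_R(X,E)$, and \Cref{prp:widthDual}(2) finishes the proof.

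The main obstacle is the identification $\RHom_R(Z,D)\simeq\RHom_R(Z,E)$ for derived $\m$-torsion $Z$, that is, $\RGam[\m]{D}\simeq E$ together with the adjunction $\RHom_R(Z,\RGam[\m]{-})\simeq\RHom_R(Z,-)$ on torsion $Z$. I would first try to cite the DG local duality theorem in \cite{injective}. Failing that, I would argue through the injective classification as sketched above, checking that $\RGam[\m]{D}$ is a bounded-below injective DG-module that is $\m$-torsion with socle $\kk$ in degree $0$. The depth characterization via Koszul in (2) should also be checked to require only $\RHom_R(X,D)\in\Dba$, which holds since $X\in\Dbb$.
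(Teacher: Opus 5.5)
Your argument is correct, but it takes a different route from the paper's.

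\textbf{The paper's route.} The paper reduces everything to the ring $\HR$. By \cite[Proposition 7.5(1)]{tilting}, $\RHom_R(\HR,D)$ is a dualizing complex for $\HR$, and it is normalized by adjunction, since $\RHom_{\HR}(\kk,\RHom_R(\HR,D))\simeq\RHom_R(\kk,D)\simeq\kk$. The paper then reruns the base-change chain of \Cref{prp:widthDual} with $D$ in place of $E$, and closes with the classical Grothendieck-duality statement \cite[Proposition 18.2.30]{LarsBook} over $\HR$.

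\textbf{Your route.} You stay in $\D(R)$ and derive the Grothendieck-duality statement from the Matlis-duality one. The inputs are local duality $\RGam[\m]{(D)}\simeq E$ and the fact that Koszul objects are derived $\m$-torsion.

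\textbf{Comparison.} The paper's route is more economical: it needs only the ring-level result and base-change identities already established, and no DG local duality. Your route gives a stronger, structural conclusion, namely that $\RHom_R(X,D)$ and $\RHom_R(X,E)$ cannot be distinguished by Koszul homology, and it does not depend on \cite[Proposition 18.2.30]{LarsBook}. Its price is the input $\RGam[\m]{(D)}\simeq E$. You should simply cite this from the local duality results of \cite[Section~7]{injective}; the paper itself uses \cite[Propositions 7.24 and 7.25]{injective} in the proof of \Cref{cor:ampLLam}. Your fallback via the classification of injectives is not convincing as written. Finite injective dimension of $\RGam[\m]{(D)}$ is beside the point; what you would need is a recognition criterion for $E$ from \cite{injective}.

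\textbf{A simplification.} You can drop the detour through $K^{*}$ in part (1). The third formula in \Cref{thm:width} gives $\width_RM=n+\inf\RHom_R(K,M)$, and \cite[Proposition 3.17]{ShaulKoszul} computes depth through $\RHom_R(K,-)$. So the single chain
\[
\RHom_R(K,\RHom_R(X,D))\simeq\RHom_R(K\lotimes_RX,D)\simeq\RHom_R(K\lotimes_RX,E)\simeq\RHom_R(K,\RHom_R(X,E))
\]
proves both parts at once. It holds for every $X\in\D(R)$, because $K\lotimes_RX$ is always derived $\m$-torsion. You then need only the boundedness hypotheses, for both $\RHom_R(X,D)$ and $\RHom_R(X,E)$, before invoking \Cref{thm:width} and \Cref{prp:widthDual}. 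For $E$ these follow from $\HH{i}{\RHom_R(X,E)}\cong\Hom_{\HR}(\HH{-i}{X},E_{\HR})$.
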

\begin{proof}
We first note that by \cite[Proposition 7.5(1)]{tilting} $\RHom_R(\HR,D)$ is a dualizing complex for $\HR$ and it is normalized by derived adjunction, since
\[
\RHom_{\HR}(\kk,\RHom_R(\HR,D))\simeq\RHom_R(\kk,D)\simeq\kk.
\]
The same proof as \Cref{prp:widthDual} works now by invoking \cite[Proposition 18.2.30]{LarsBook} instead of \cite[Proposition 16.2.22]{LarsBook}.
\end{proof}
Next we prove the Derived Width Formula for DG-rings, which is dual to the Derived Depth Formula proved by Yang in \cite[Lemma 2.5]{yang}. Over local rings it is proved in \cite[Theorem 16.3.9]{LarsBook}.

\begin{theorem}\label{thm:dwf}
Let $M\in\Dfb$ and $N\in\Db$ such that $\RHom_R(M,N)\in\Db$. If $\flatdim_RM<\infty$ or $\injdim_RN<\infty$, then
\[
\width_R\RHom_R(M,N)=\depth_RM+\width_RN-\depth R.
\]
\end{theorem}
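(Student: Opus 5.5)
The derived width formula will be proved by Matlis duality, converting it into the Derived Depth Formula of Yang \cite[Lemma~2.5]{yang}. Recall that formula in the form
\[
\depth_R(M\lotimes_R Z)=\depth_RM+\depth_RZ-\depth R
\]
for $M\in\Dfb$ and $Z\in\Db$ (or with suitable boundedness) when $\flatdim_R M<\infty$ or $\flatdim_RZ<\infty$. Set $N^\vee=\RHom_R(N,E)$. Then $N^\vee\in\Db$, since by \cite[Theorem~4.10]{injective} its homology is the Matlis dual of that of $N$ in reversed degrees.

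The first step is to identify $\RHom_R(M,N)^\vee$. Since $M\in\Dfb$, hom-evaluation \cite[Lemma~2.7]{ShaulFinDim} gives
\[
\RHom_R(\RHom_R(M,N),E)\simeq M\lotimes_R\RHom_R(N,E)=M\lotimes_RN^\vee.
\]
This needs either $\flatdim_RM<\infty$, i.e.\ $M$ perfect, or $\injdim_RN<\infty$. In the latter case $\flatdim_RN^\vee<\infty$ by the usual DG duality between injective and flat dimension, and $E$ is injective. I expect verifying the exact hypotheses of the evaluation morphism in each of the two cases to be the main technical point, along with the fact that $\injdim_RN<\infty$ implies $\flatdim_RN^\vee<\infty$. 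If that implication is not directly available, I would instead first reduce to $\wh R$, where dualizing DG-modules exist, and use \Cref{prp:widthGDual}.

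Next apply \Cref{prp:widthDual}. Since $\RHom_R(M,N)\in\Db\subseteq\Dbb$, part (2) gives
\[
\width_R\RHom_R(M,N)=\depth_R\RHom_R(\RHom_R(M,N),E)=\depth_R(M\lotimes_RN^\vee),
\]
and similarly $\width_RN=\depth_RN^\vee$.

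Then apply Yang's Derived Depth Formula to $M$ and $N^\vee$. The required finiteness holds either from $\flatdim_RM<\infty$ or from $\flatdim_RN^\vee<\infty$, obtained in the first step from $\injdim_RN<\infty$. This gives
\[
\depth_R(M\lotimes_RN^\vee)=\depth_RM+\depth_RN^\vee-\depth R=\depth_RM+\width_RN-\depth R,
\]
which is the claimed equality.

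Finally, check the degenerate cases. If $N\simeq0$ or $\width_RN=\infty$, both sides must be interpreted consistently. \Cref{prp:finiteDepth} ensures that finiteness of depth and width match, so infinite values propagate the same way on both sides.
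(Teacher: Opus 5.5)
Your proposal is correct and is essentially the paper's proof: \Cref{prp:widthDual}(2) turns the width of $\RHom_R(M,N)$ into the depth of its Matlis dual, hom-evaluation identifies that dual with $M\lotimes_R\RHom_R(N,E)$, Yang's derived depth formula \cite[Lemma~2.5]{yang} splits the depth (with \cite[Lemma~7.12]{ShaulFinDim} giving $\flatdim_R\RHom_R(N,E)<\infty$ when $\injdim_RN<\infty$), and \Cref{prp:widthDual}(2) again turns $\depth_R\RHom_R(N,E)$ into $\width_RN$. One remark is inaccurate: hom-evaluation does not need $\flatdim_RM<\infty$ or $\injdim_RN<\infty$, because $E$ itself has finite injective dimension and $M\in\Dfb$, $N\in\Db$ already suffice; the dimension hypotheses are used only in the depth formula, so your fallback via $\wh R$ and \Cref{prp:widthGDual} is unnecessary.
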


\begin{proof}
The asserted equality follows from the computation below
\begin{align*}
\width_R\RHom_R(M,N)&=\depth_R\RHom_R(\RHom_R(M,N),E)\\
&=\depth_R(M\lotimes_R\RHom_R(N,E))\\
&=\depth_RM+\depth_R\RHom_R(N,E)-\depth R\\
&=\depth_RM+\width_RN-\depth R,
\end{align*}
where the first equality follows from \Cref{prp:widthDual}(2), the second from the hom-evaluation part~(i) of \cite[Lemma~2.7]{ShaulFinDim}, the third from \cite[Lemma 2.5]{yang} and \cite[Lemma 7.12]{ShaulFinDim}, and the fourth from \Cref{prp:widthDual}(2). 
\end{proof}

The next result shows that derived completion commutes with the functor $\RHom_R(\HR,-)$. This compatibility allows us to transfer bounds for local homology over local rings to the DG setting. It is a special case of \cite[Proposition 2.16]{ShaulCompletion}, but we provide a direct proof.

\begin{theorem}\label{thm:LLamCobaseChange}
Let $X \in \Dba$. Then the following isomorphism holds:
\begin{equation*}
\LL[\m\HR]{\HR}{(\RHom_R(\HR, X))} \simeq \RHom_R(\HR, \LL[\m]{R}{(X)}).
\end{equation*}
\end{theorem}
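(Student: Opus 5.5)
We will compute both sides with the telescope description of derived completion and reduce the claim to a Hom-swap adjunction. Choose a finite sequence $\widetilde\x$ in $R_0$ whose image $\x$ in $\HR$ generates $\m$. Set $T=R\otimes_{R_0}\Tel(R_0;\widetilde\x)$, a bounded complex of free $R$-modules, hence semifree. Then, as in the proof of \Cref{lem:4.2},
\[
\LL[\m]{R}{(X)}\simeq\Hom_{R_0}(\Tel(R_0;\widetilde\x),X)\simeq\RHom_R(T,X).
\]
For the base change we use the identity $\HR\lotimes_RT\simeq\HR\otimes_RT\cong\Tel(\HR;\x)$, already invoked at the end of the proof of \Cref{lem:4.2}. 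The telescope description over $\HR$ then gives
\[
\LL[\m\HR]{\HR}{(Z)}\simeq\RHom_{\HR}(\HR\lotimes_RT,Z)\quad\text{for }Z\in\D(\HR).
\]

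The computation is then the chain
\begin{align*}
\LL[\m\HR]{\HR}{(\RHom_R(\HR,X))}
&\simeq\RHom_{\HR}(\HR\lotimes_RT,\RHom_R(\HR,X))\\
&\simeq\RHom_R(T,\RHom_R(\HR,X))\\
&\simeq\RHom_R(\HR\lotimes_RT,X)\\
&\simeq\RHom_R(\HR,\RHom_R(T,X))\\
&\simeq\RHom_R(\HR,\LL[\m]{R}{(X)}).
\end{align*}
The justification for each step is as follows:
\begin{itemize}
\item The second isomorphism is extension/restriction of scalars adjunction along $R\to\HR$.
\item The third and fourth are tensor-hom adjunction, using commutativity of $\lotimes_R$.
\item The last is the description of $\LL[\m]{R}{(X)}$ above.
\end{itemize}
All of these are formal adjunctions valid in $\D(R)$ with no boundedness required. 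The hypothesis $X\in\Dba$ enters only to ensure that the objects in question lie where the paper's definitions apply, namely $\RHom_R(\HR,X)\in\Dba$.

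The main obstacle is checking that the resulting isomorphism is one in $\D(\HR)$ rather than just in $\D(R)$, and that it is natural and compatible with the canonical maps. The $\HR$-structures in the chain come from the $\HR$-factor in the first argument of the Hom. So we will verify that every adjunction used is $\HR$-linear with respect to the action through $\HR$. We do this at the level of actual complexes: $T$ is semifree, and we replace $X$ by a K-injective resolution $I$. Then the chain becomes the honest isomorphisms of complexes of $\HR$-modules
\[
\Hom_{\HR}(\HR\otimes_RT,\Hom_R(\HR,I))\cong\Hom_R(T,\Hom_R(\HR,I))\cong\Hom_R(\HR,\Hom_R(T,I)).
\]
Here $\Hom_R(\HR,I)$ is K-injective over $\HR$, since $\Hom_R(\HR,-)$ is right adjoint to an exact restriction functor. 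Working with $\HR$ as a DG-module is harmless, and if needed one replaces $R\to\HR$ by a semifree or K-flat model. Independence of the choice of lifts is already guaranteed by the background results cited for the telescope description.
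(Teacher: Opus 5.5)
Your proposal is correct and follows essentially the same route as the paper. Both arguments combine three ingredients: the telescope description of derived completion, the base change $\Tel(\HR;\overline{\x})\cong\HR\otimes_{R_0}\Tel(R_0;\x)$, and a swap (adjunction) isomorphism. The only difference is cosmetic: you package the telescope as the semifree DG $R$-module $T=R\otimes_{R_0}\Tel(R_0;\widetilde\x)$ and use $R$-linear adjunctions, whereas the paper works with $\Hom_{R_0}(\Tel(R_0;\x),-)$ directly.

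Your extra check, via a K-injective resolution, that the isomorphism holds in $\D(\HR)$ and not just in $\D(R)$ covers a point the paper passes over. As you also note, the hypothesis $X\in\Dba$ is never actually used.
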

\begin{proof}
Let $\mathbf{x}$ be a finite sequence of elements in $R_0$ such that its image in $\HR$ generates $\m$, and let $\overline{\mathbf{x}}$ be its image in $\HR$. By the base change property of the telescope complex, we have that $\Tel(\HR; \overline{\mathbf{x}}) \cong \Tel(R_0; \mathbf{x}) \otimes_{R_0} \HR$. The $\HR$-complex $\LL[\m\HR]{\HR}{(\RHom_R(\HR, X))}$ is equivalent to the complex $\Hom_{\HR}(\Tel(\HR; \overline{\mathbf{x}}), \RHom_R(\HR, X))$ by \cite[Theorem 13.1.15]{LarsBook}, and through adjunction and the unitor this is equivalent to $\Hom_{R_0}(\Tel(R_0; \mathbf{x}), \RHom_R(\HR, X))$. Note that $\Tel(\HR; \overline{\mathbf{x}})$ is a semi-free $\HR$-complex and $\Tel(R_0; \mathbf{x})$ is a semi-free $R_0$-complex. We now have the isomorphisms
\begin{align*}
\Hom_{R_0}(\Tel(R_0; \mathbf{x}), \RHom_R(\HR, X)) &\simeq \RHom_R(\HR, \Hom_{R_0}(\Tel(R_0; \mathbf{x}), X)) \\
    &\simeq \RHom_R(\HR, \LL[\m]{R}{(X)})
\end{align*}
where the first isomorphism is swap and the second isomorphism is from \cite[Proposition 2.4]{ShaulCompletion}. This gives the desired result.
\end{proof}

The next result provides bounds on the local (co)homology of a complex. It is a DG version of \cite[Proposition 16.2.34]{LarsBook}.

\begin{theorem}\label{thm:bounds}
Let $M\in\D(R)$, then
\begin{enumerate}
\item if $M\in\Dbb$, then $-\inf\RGam[\m]{(M)}\leq\ld R-\width_RM$,
\item if $M\in\Dba$, then $\sup\LLam[\m]{(M)}\leq\ld R-\depth_RM$.
\end{enumerate}
\end{theorem}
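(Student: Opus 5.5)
We reduce both bounds to the corresponding statements over the local ring $\HR$, using the base change results already established. Two facts about $\ld$ will be used throughout. First, $\ld R=\dim\HR$: this follows from the definition, since $\HH{i}{R}$ is a finitely generated $\HR$-module for $i\geq 0$ with $\dim\HH{i}{R}\leq\dim\HR$, and the supremum is attained at $i=0$. Second, over $\HR$ the local ring version \cite[Proposition 16.2.34]{LarsBook} gives, for complexes $N$ over $\HR$,
\[
-\inf\RG[\m]{\HR}{(N)}\leq\dim\HR-\width_{\HR}N,\qquad \sup\LL[\m]{\HR}{(N)}\leq\dim\HR-\depth_{\HR}N,
\]
under the appropriate boundedness hypotheses.

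For (2), take $M\in\Dba$ and set $N=\RHom_R(\HR,M)\in\D(\HR)$, which lies in $\Dba[\HR]$. By \Cref{thm:LLamCobaseChange},
\[
\LL[\m]{\HR}{(N)}\simeq\RHom_R(\HR,\LLam[\m]{(M)}).
\]
The sup of a bounded above DG-module $Y$ is detected by $\RHom_R(\HR,-)$, namely $\sup\RHom_R(\HR,Y)=\sup Y$. This is the dual of \cite[(1.3)]{ShaulCM}; it also follows from Matlis duality with $E$, since $\RHom_R(\HR,Y)$ is dual to $\HR\lotimes_R\RHom_R(Y,E)$. We need $\LLam[\m]{(M)}\in\Dba$; this holds because the telescope complex is bounded, so $\Hom_{R_0}(\Tel,M)$ has $\sup\leq\sup M+n$. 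We also have $\depth_{\HR}N=\depth_RM$, since $\RHom_{\HR}(\kk,N)\simeq\RHom_R(\kk,M)$ by adjunction. Combining these,
\[
\sup\LLam[\m]{(M)}=\sup\LL[\m]{\HR}{(N)}\leq\dim\HR-\depth_{\HR}N=\ld R-\depth_RM.
\]

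For (1), one option is to dualize. With $M\in\Dbb$, set $M^\vee=\RHom_R(M,E)\in\Dba$. Then $\RHom_R(\RGam[\m]{(M)},E)\simeq\LLam[\m]{(M^\vee)}$, by Greenlees--May duality or directly by the telescope description and Hom–tensor adjunction. Since $\RHom_R(-,E)$ reverses degrees, $-\inf\RGam[\m]{(M)}=\sup\LLam[\m]{(M^\vee)}$. By \Cref{prp:widthDual}(2), $\depth_RM^\vee=\width_RM$, so (2) applied to $M^\vee$ gives (1).

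Alternatively, one can argue directly: $\HR\lotimes_R\RGam[\m]{(M)}\simeq\RG[\m]{\HR}{(\HR\lotimes_RM)}$ by the Čech description, infima are preserved by $\HR\lotimes_R-$ via \cite[(1.3)]{ShaulCM}, and widths agree by \eqref{eq:widthBaseChange}. This version requires checking that $\RGam[\m]{(M)}\in\Dbb$, which holds since the Čech complex is bounded.

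The main obstacle is the claim $\sup\RHom_R(\HR,Y)=\sup Y$ for $Y\in\Dba$, together with the duality isomorphism used in the first route to (1). I would establish the former via the Matlis duality with $E$ just indicated, reducing it to \cite[(1.3)]{ShaulCM}. The extreme cases (acyclic $M$, or infinite depth or width) make the inequalities trivial.
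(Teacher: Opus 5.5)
Your proof is correct. Part (2), and the ``alternative'' direct argument you sketch for (1), are exactly the paper's proof. For (2) the paper uses cobase change $\RHom_R(\HR,-)$ together with \Cref{thm:LLamCobaseChange} and \cite[(1.2)]{ShaulCM}. For (1) it uses base change $\HR\lotimes_R-$ of the \v{C}ech complex together with \cite[(1.3)]{ShaulCM} and \eqref{eq:widthBaseChange}. Both parts then invoke \cite[Proposition 16.2.34]{LarsBook} over $\HR$. The equality $\sup\RHom_R(\HR,Y)=\sup Y$ for $Y\in\Dba$, which you flag as the main obstacle, is precisely \cite[(1.2)]{ShaulCM}, so it need not be re-proved. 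Your checks that $\LLam[\m]{(M)}\in\Dba$ and $\RGam[\m]{(M)}\in\Dbb$ via the bounded telescope supply a point the paper leaves implicit here.

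Your primary route to (1) is genuinely different. You deduce (1) formally from (2) applied to $M^\vee=\RHom_R(M,E)$, using $\RHom_R(\RGam[\m]{(M)},E)\simeq\LLam[\m]{(M^\vee)}$, the degree reversal of $\RHom_R(-,E)$, and \Cref{prp:widthDual}(2). That proposition is proved before the theorem, so there is no circularity. Your route shows the two bounds are Matlis dual to one another and confines the descent to $\HR$ to the completion half. The paper instead treats the two halves symmetrically and directly, each by a one-line (co)base change, without invoking $E$ or \Cref{prp:widthDual}. Since the proof of \Cref{prp:widthDual} itself descends to $\HR$ through \cite[Proposition 16.2.22]{LarsBook}, the total input of the two routes is comparable.

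One small correction: your closing remark that infinite depth or width makes the inequalities trivial is not accurate. For non-acyclic $M$ the right-hand side is then $-\infty$, and the assertion is the nontrivial statement that $\LLam[\m]{(M)}$, respectively $\RGam[\m]{(M)}$, is acyclic. Nothing breaks, because your chains never needed to set these cases aside: the local-ring statement over $\HR$ already covers them, and your reductions carry them through.
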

\begin{proof}
\begin{enumerate}
\item The asserted inequality follows from the string of (in)equalities below
\begin{align*}
-\inf\RG[\m]{R}{(M)}&=-\inf\HR\lotimes_R\RG[\m]{R}{(M)}\\
&=-\inf\RG[\m\HR]{\HR}{(\HR\lotimes_RM)}\\
&\leq\dim\HR-\width_{\HR}(\HR\lotimes_RM)\\
&=\ld R-\width_RM,
\end{align*}
where the first equality follows from  \cite[(1.3)]{ShaulCM}, the second equality follows from \cite[Proposition 2.11]{ShaulCM}, the inequality follows from \cite[Proposition 16.2.34]{LarsBook}, and the last equality from \cite[Remark 2.3]{ShaulCM} and \eqref{eq:widthBaseChange}.
\item The asserted inequality follows from the string of (in)equalities below
\begin{align*}
\sup\LL[\m]{R}{(M)}&=\sup\RHom_R(\HR,\LL[\m]{R}{(M)})\\
&=\sup\LL[\m\HR]{\HR}{(\RHom_R(\HR,M))}\\
&\leq\dim\HR-\depth_{\HR}\RHom_R(\HR,M)\\
&=\ld R-\depth_RM,
\end{align*}
where the first equality follows from \cite[(1.2)]{ShaulCM}, the second equality from \Cref{thm:LLamCobaseChange}, the inequality from \cite[Proposition 16.2.34]{LarsBook}, and the last equality from \cite[Remark 2.3]{ShaulCM} and \cite[Proposition 3.2]{ShaulCM}.\qedhere
\end{enumerate}
\end{proof}
The next corollary was proved over commutative local noetherian rings by Strooker in \cite[Corollary 6.1.10]{Strooker}, who credits Bartijn with observing it in his thesis. It can also be found in \cite[Corollary 16.2.35]{LarsBook}.
\begin{corollary}
Let $M\in\Db$ with $\depth_RM<\infty$. Then
\[
\depth_RM+\width_RM\leq\ld R.
\]
\end{corollary}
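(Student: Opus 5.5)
The plan is to combine the two bounds of \Cref{thm:bounds} with the relation between width and local homology from \Cref{thm:width}, and to exploit the fact that for $M\in\Db$ both $\RGam[\m]{(M)}$ and $\LLam[\m]{(M)}$ are nonzero and are controlled from opposite sides. Concretely, \Cref{thm:bounds}(2) gives
\[
\sup\LLam[\m]{(M)}\leq\ld R-\depth_RM ,
\]
while \Cref{thm:width} (first equality, with $I=\m$) gives
\[
\inf\LLam[\m]{(M)}=\width_RM .
\]
The hypothesis $\depth_RM<\infty$ and \Cref{prp:finiteDepth} show that $\width_RM<\infty$. Hence $\LLam[\m]{(M)}$ has a nonzero homology module (its infimum is finite), and therefore $\inf\LLam[\m]{(M)}\leq\sup\LLam[\m]{(M)}$.

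Chaining these gives
\[
\width_RM=\inf\LLam[\m]{(M)}\leq\sup\LLam[\m]{(M)}\leq\ld R-\depth_RM,
\]
which rearranges to $\depth_RM+\width_RM\leq\ld R$. Some bookkeeping is needed here. Since $M\in\Db$, the depth is $>-\infty$: by \cite[Proposition 3.17]{ShaulKoszul}-type bounds $\depth_RM\geq-\sup M$. The width is likewise finite and $\geq\inf M$ by \Cref{thm:widthInf}. So no $\infty-\infty$ issues arise, and $\ld R$ is finite since it equals $\dim\HR$ minus nothing pathological by \cite[Remark 2.3]{ShaulCM}.

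The only genuinely delicate step is ensuring $\LLam[\m]{(M)}\not\simeq0$, i.e.\ that its infimum is a finite number rather than $+\infty$. This is exactly where finiteness of depth enters, via \Cref{prp:finiteDepth}, which converts $\depth_RM<\infty$ into $\width_RM<\infty$. Once that is in place, \Cref{thm:width} forces a nonvanishing homology group of $\LLam[\m]{(M)}$ in degree $\width_RM$, and the inequality $\inf\leq\sup$ is automatic.

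As an alternative symmetric route, one could use \Cref{thm:bounds}(1) with derived torsion and $\depth_RM=\inf$-type descriptions of $\RGam[\m]{(M)}$. The completion route above avoids needing a separate identification of $\sup\RGam[\m]{(M)}$ with $-\depth_RM$.
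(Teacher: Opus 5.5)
Your proof is correct. It is the mirror image of the paper's argument: you work on the completion side, while the paper works on the torsion side.

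The paper's chain is:
\begin{itemize}
\item $\depth_RM=-\sup\RGam[\m]{(M)}$, citing \cite[Proposition 3.3]{ShaulCM};
\item the trivial inequality $-\sup\RGam[\m]{(M)}\leq-\inf\RGam[\m]{(M)}$;
\item \Cref{thm:bounds}(1), namely $-\inf\RGam[\m]{(M)}\leq\ld R-\width_RM$.
\end{itemize}
Your chain is:
\begin{itemize}
\item $\width_RM=\inf\LLam[\m]{(M)}$ by \Cref{thm:width};
\item $\inf\leq\sup$ for the nonzero object $\LLam[\m]{(M)}$;
\item \Cref{thm:bounds}(2), namely $\sup\LLam[\m]{(M)}\leq\ld R-\depth_RM$.
\end{itemize}

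Both arguments are equally short and both invoke \Cref{prp:finiteDepth}, but it plays different roles. In the paper's chain, nonvanishing of $\RGam[\m]{(M)}$ is immediate from $\depth_RM<\infty$. Finiteness of width is used only to rearrange the bound of \Cref{thm:bounds}(1). In your chain, finiteness of width is exactly what guarantees $\LLam[\m]{(M)}\not\simeq0$, so \Cref{prp:finiteDepth} carries the real weight.

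One advantage of your route is that both key inputs are proved in the paper itself. The paper's route relies on the external identification of $\depth_RM$ with $-\sup\RGam[\m]{(M)}$.

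Two cosmetic points:
\begin{itemize}
\item The lower bound $\depth_RM\geq-\sup M$ follows directly from the definition, since $\sup\RHom_R(\kk,M)\leq\sup M$. No Koszul reference is needed.
\item In your last paragraph you say depth is described by the infimum of $\RGam[\m]{(M)}$. It is the supremum, as you yourself state in the following sentence.
\end{itemize}
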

\begin{proof}
By \Cref{prp:finiteDepth} $\width_RM<\infty$. Now we observe that
\begin{align*}
\depth_RM+\width_RM&=-\sup\RGam[\m]{(M)}+\width_RM\\
&\leq-\inf\RGam[\m]{(M)}+\width_RM\\
&\leq\ld R,
\end{align*}
where the equality follows from \cite[Proposition 3.3]{ShaulCM}, the first inequality is trivial, and the last inequality follows from \Cref{thm:bounds}.
\end{proof}

The next result, which will be used later in the paper, shows that derived completion does not change the amplitude of a bounded complex with finitely generated homology.
\begin{proposition}\label{prp:ampCompletion}
If $X\in\Dfb$, then 
\[
\inf X=\inf\LLam[\m]{(X)},\quad\text{and}\quad\sup X=\sup\LLam[\m]{(X)}.
\]
\end{proposition}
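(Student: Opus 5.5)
The infimum equality is immediate from earlier results. Since $X\in\Dfb\subseteq\Dfbb$, \Cref{thm:widthInf} gives $\width_RX=\inf X$, and \Cref{thm:width} gives $\width_RX=\inf\LLam[\m]{(X)}$. Combining these yields $\inf X=\inf\LLam[\m]{(X)}$, and the boundedness of $\LLam[\m]{(X)}$ from below is already part of \Cref{lem:4.2}.

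For the supremum, I would reduce to the ring $\HR$ via the coinduction functor $\RHom_R(\HR,-)$, which detects the supremum by \cite[(1.2)]{ShaulCM}. Since $X\in\Dba$, \Cref{thm:LLamCobaseChange} applies, and I would compute
\begin{align*}
\sup\LLam[\m]{(X)}&=\sup\RHom_R(\HR,\LLam[\m]{(X)})\\
&=\sup\LL[\m\HR]{\HR}{(\RHom_R(\HR,X))},
\end{align*}
while $\sup X=\sup\RHom_R(\HR,X)$, again by \cite[(1.2)]{ShaulCM}. It then suffices to show that, over the noetherian local ring $\HR$, derived $\m$-completion preserves the supremum of the complex $Y=\RHom_R(\HR,X)$.

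To finish, I would verify that $Y\in\Df[\HR]$ and is bounded above. Bounded above follows from $\sup Y=\sup X<\infty$. Degreewise finite generation of homology should follow as in \cite[Theorem~2.13]{tilting}: $\HR$ has a semifree resolution over $R$ with finitely generated pieces because $R\in\Dfbb$ is noetherian, and $X\in\Dfb$. This step is where I expect the main obstacle, since $Y$ need not be bounded below, so I must confirm that the finiteness argument works without a lower bound. The second concern is then the classical fact that, for a bounded-above complex with finitely generated homology over a noetherian local ring, $\LLam[\m]{(Y)}\simeq\wh{\HR}\otimes_{\HR}Y$. This holds, for example, by taking a bounded-above resolution $F$ of $Y$ by finite free modules, where completion acts degreewise as $\wh{\HR}\otimes-$. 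Faithful flatness of $\HR\to\wh{\HR}$ then gives $\HH{i}{\LLam[\m]{(Y)}}\cong\wh{\HR}\otimes\HH{i}{Y}$, which is nonzero exactly when $\HH{i}{Y}\neq0$. Hence the suprema agree, completing the argument. The same degreewise argument applied directly would also reprove the infimum statement.
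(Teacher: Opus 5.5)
Your treatment of the supremum is exactly the paper's proof. You pass to $\RHom_R(\HR,-)$ using \cite[(1.2)]{ShaulCM}, move it past completion with \Cref{thm:LLamCobaseChange}, and use that over $\HR$ derived completion agrees with $\wh{\HR}\otimes_{\HR}-$ on complexes with finitely generated homology, together with faithful flatness.

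For the infimum, the paper writes out the parallel chain with $\HR\lotimes_R-$, \cite[(1.3)]{ShaulCM} and \cite[Lemma~4.2]{ShaulHomDim}. Your route $\inf\LLam[\m]{(X)}=\width_RX=\inf X$ through \Cref{thm:width} and \Cref{thm:widthInf} is a legitimate shortcut, since those results rest on the same reduction to $\HR$.

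The worry you call the main obstacle is harmless. Let $P\to\HR$ be a semifree resolution with finitely many basis elements in each degree. Let $P'\subseteq P$ be the compact sub-DG-module spanned by the basis elements of degree at most $m$. Then $P/P'$ is semifree and lives in degrees $>m$, so $\H_i(\RHom_R(P,X))\cong\H_i(\RHom_R(P',X))$ for $i>\sup X-m$, and $\RHom_R(P',X)\in\thick_RX\subseteq\Df$. Only the upper bound on $X$ enters, so no lower bound on $Y$ is needed.

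What does not work as written is your justification of $\LLam[\m]{(Y)}\simeq\wh{\HR}\otimes_{\HR}Y$ via a bounded-above resolution of $Y$ by finitely generated free modules. Such resolutions need not exist, even for bounded $Y$. For example, take $A=\kk[x,y]/(x,y)^2$ with maximal ideal $\mathfrak n$; no bounded-above complex $F$ of finitely generated free $A$-modules is quasi-isomorphic to $\kk$. After splitting off a contractible summand degree by degree from the top, one may assume $\partial(F)\subseteq\mathfrak nF$. Since $\mathfrak n^2=0$, the homology in the top nonzero degree $t$ then contains $\mathfrak nF_t$, which has length at least $2$.

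Use instead that completion has finite displacement. If $\overline{\x}$ is a sequence of $n$ generators of $\m$, then $\LLam[\m]{(Z)}\simeq\Hom_{\HR}(\Tel(\HR;\overline{\x}),Z)$ for every $Z\in\D(\HR)$. The telescope is concentrated in degrees $-n,\dots,0$, so $\sup\LLam[\m]{(Z)}\le\sup Z+n$. Let $\tau_{\ge j}Y$ and $\tau_{<j}Y$ denote the truncations carrying the homology of $Y$ in degrees $\ge j$ and $<j$. Applying the bound to $Z=\tau_{<j}Y$ in the triangle $\tau_{\ge j}Y\to Y\to\tau_{<j}Y\to\Sigma\tau_{\ge j}Y$ gives $\H_i(\LLam[\m]{(Y)})\cong\H_i(\LLam[\m]{(\tau_{\ge j}Y)})$ for $i\ge j+n$. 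Since $\tau_{\ge j}Y$ is bounded with finitely generated homology, the classical result applies on a bounded-below finite free resolution. It gives $\H_i(\LLam[\m]{(Y)})\cong\wh{\HR}\otimes_{\HR}\H_i(Y)$ for every $i$, and faithful flatness finishes.

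The paper invokes this same fact (``similarly as above'') without spelling it out, so with this repair your proof is complete.
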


\begin{proof}
We first show that $\inf X=\inf\LLam[\m]{(X)}$, indeed
\begin{align*}
\inf X&=\inf(\HR\lotimes_RX)\\
&=\inf \LL[\m\HR]{\HR}{(\HR\lotimes_RX)}\\
&=\inf (\HR\lotimes_R\LL[\m]{R}{(X)})\\
&=\inf\LL[\m]{R}{(X)},
\end{align*}
where the first equality follows from \cite[(1.3)]{ShaulCM}, the second follows since the functor $\LL[\m\HR]{\HR}{(-)}$ is equivalent to the functor $-\lotimes_{\HR}\widehat{\HR}$ on complexes with finitely generated homology, and $\widehat{\HR}$ is faithfully flat over $\HR$ \cite[Proposition 16.1.10]{LarsBook}, the third equality follows from \cite[Lemma 4.2]{ShaulHomDim}, and the last one from \cite[(1.3)]{ShaulCM}.

For the second equality we observe
\begin{align*}
\sup X&=\sup\RHom_R(\HR,X)\\
&=\sup\LL[\m\HR]{\HR}{(\RHom_R(\HR,X))}\\
&=\sup\RHom_R(\HR,\LL[\m]{R}{(X)})\\
&=\sup\LL[\m]{R}{(X)},
\end{align*}
where the first equality follows from \cite[(1.2)]{ShaulCM}, the second follows similarly as above, the third one from \Cref{thm:LLamCobaseChange}, and the last one from \cite[(1.2)]{ShaulCM}.
\end{proof}

The next result, which will be used later in the paper, is dual to \cite[Corollary 4.3]{FN} and connects the width of a DG $R$-module with the width of its cobase change over $\HR$.
\begin{proposition}\label{prp:widthCobaseChange}
If $X\in\Db$ with $\injdim_RX<\infty$, then
\[
\width_RX=\width_{\HR}\RHom_R(\HR,X)+\depth R-\depth \HR.
\]
\end{proposition}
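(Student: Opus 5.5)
We dualize via Matlis duality, reducing the width statement to a depth statement, and then invoke the depth cobase change result \cite[Corollary~4.3]{FN}. That result should say, for $Z\in\Db$ with $\flatdim_RZ<\infty$,
\[
\depth_RZ=\depth_{\HR}(\HR\lotimes_RZ)+\depth R-\depth\HR.
\]

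Set $Z=\RHom_R(X,E)$. Since $X\in\Db$, the homology formula $\HH{i}{\RHom_R(X,E)}\cong\Hom_{\HR}(\HH{-i}{X},E_{\HR})$ shows $Z\in\Db$. By \Cref{prp:widthDual}(2) we have $\width_RX=\depth_RZ$. Next we must check that $\flatdim_RZ<\infty$. For $N\in\Db$, hom-evaluation \cite[Lemma~2.7]{ShaulFinDim} should give
\[
\RHom_R(N\lotimes_R Z,E)\simeq\RHom_R(N,X)\lotimes\ldots
\]
More directly, adjunction gives $\RHom_R(N\lotimes_RZ,E)\simeq\RHom_R(N,\RHom_R(Z,E))$, which is not obviously related to $X$. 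Instead we use the standard duality
\[
\Tor_i^R(N,\RHom_R(X,E))\cong\Hom_{\HR}(\Ext^i_R(N,X),E_{\HR})
\]
(via $\RHom_R(X,E)\lotimes_RN\simeq\RHom_R(\RHom_R(N,X),E)$). This holds when $N$ is replaced by perfect objects or, with the hypotheses in \cite[Lemma~7.12]{ShaulFinDim}, when $\injdim_RX<\infty$. It turns $\injdim_RX<\infty$ into $\flatdim_RZ<\infty$.

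Applying the depth cobase change to $Z$ gives
\[
\width_RX=\depth_{\HR}(\HR\lotimes_RZ)+\depth R-\depth\HR.
\]
It remains to identify $\HR\lotimes_RZ$. Hom-evaluation with the finite argument $\HR$ yields
\[
\HR\lotimes_R\RHom_R(X,E)\simeq\RHom_R(\RHom_R(\HR,X),E)\simeq\RHom_{\HR}(\RHom_R(\HR,X),E_{\HR}),
\]
exactly as in the proof of \Cref{prp:widthDual}(1), using adjunction and $\RHom_R(\HR,E)\simeq E_{\HR}$. Then \cite[Proposition~16.2.22]{LarsBook} over $\HR$ gives $\depth_{\HR}$ of this equal to $\width_{\HR}\RHom_R(\HR,X)$. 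That proposition needs $\RHom_R(\HR,X)\in\Dbb$, which holds because $\injdim_RX<\infty$ bounds $\inf\RHom_R(\HR,X)$ from below. Combining these gives the formula.

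The main obstacle is the legitimacy of the evaluation isomorphisms for nonfinite $X$. Specifically, we need $\flatdim_R\RHom_R(X,E)<\infty$ from $\injdim_RX<\infty$, and the tensor/hom-evaluation with $\HR$, which holds since $\HR\in\Dfb$ and $E$ is injective. If the first fails to be available directly, an alternative is to bypass $Z$. In that case we would repeat the proof of \cite[Corollary~4.3]{FN} dually, using \Cref{thm:dwf} with $M=\HR$ and $N=X$, which gives
\[
\width_R\RHom_R(\HR,X)=\depth_R\HR+\width_RX-\depth R.
\]
We would then combine this with $\width_R\RHom_R(\HR,X)=\width_{\HR}\RHom_R(\HR,X)$ (base change for $\HR$-modules via \eqref{eq:widthBaseChange}) and $\depth_R\HR=\depth\HR$. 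This second route is in fact cleaner, and we would adopt it, checking only that $\RHom_R(\HR,X)\in\Db$. That holds because $\injdim_RX<\infty$ and $X\in\Dba$.
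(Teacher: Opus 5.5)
Your first sketch is exactly the paper's proof: Matlis dualize to $Z=\RHom_R(X,E)$, get $\flatdim_RZ<\infty$ from \cite[Lemma~7.12]{ShaulFinDim}, apply \cite[Corollary~4.3]{FN}, then use hom-evaluation, adjunction and Matlis duality over $\HR$.

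The route you actually adopt is \Cref{thm:dwf} with $M=\HR$ and $N=X$. It is also correct. It is not circular, since \Cref{thm:dwf} is proved earlier without this proposition. Its hypothesis $\RHom_R(\HR,X)\in\Db$ holds because $\inf\RHom_R(\HR,X)\geq-\injdim_RX$ and $\sup\RHom_R(\HR,X)=\sup X$.

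The two arguments pass through the same equalities in a different order. The proof of \Cref{thm:dwf} is Matlis duality plus the derived depth formula \cite[Lemma~2.5]{yang}, and \cite[Corollary~4.3]{FN} is that formula at $\HR$.

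\begin{itemize}
\item Your version is shorter and shows that the proposition is just the case $M=\HR$ of the derived width formula.
\item The paper's version lands directly on a width computed over $\HR$, via Matlis duality over $\HR$, so it needs no change-of-rings comparison.
\end{itemize}

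Your route needs two such comparisons: $\width_RW=\width_{\HR}W$ for $W=\RHom_R(\HR,X)$, and $\depth_R\HR=\depth\HR$. Both are true, but you have not justified them correctly.

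The first does not follow from \eqref{eq:widthBaseChange} as you cite it. That equation gives $\width_{\HR}(\HR\lotimes_RW)$, and $\HR\lotimes_RW\not\simeq W$ in general. Use \Cref{thm:width} instead, together with
\[
K^R(\x)\lotimes_RW\simeq(K^R(\x)\lotimes_R\HR)\lotimes_{\HR}W\simeq K^{\HR}(\x)\lotimes_{\HR}W,
\]
which holds by \cite[Proposition~2.9]{ShaulKoszul}.

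For the second, $\depth_R\HR=-\sup\RGam[\m]{(\HR)}$ by \cite[Proposition~3.3]{ShaulCM}. The \v{C}ech description of derived torsion commutes with restriction along $R\to\HR$, so this equals $\depth\HR$.
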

\begin{proof}
The desired equality follows from the string of equalities below
\begin{align*}
\width_RX&=\depth_R\RHom_R(X,E)\\
&=\depth_{\HR}(\HR\lotimes_R\RHom_R(X,E))+\depth R-\depth\HR\\
&=\depth_{\HR}\RHom_R(\RHom_R(\HR,X),E)+\depth R-\depth \HR\\
&=\depth_{\HR}\RHom_{\HR}(\RHom_R(\HR,X),E_{\HR})+\depth R-\depth \HR\\
&=\width_{\HR}\RHom_R(\HR,X)+\depth R-\depth \HR.
\end{align*}
The first equality follows from \Cref{prp:widthDual}, the second from \cite[Lemma 7.12]{ShaulFinDim} and \cite[Corollary 4.3]{FN}, the third from the hom-evaluation part~(i) of \cite[Lemma~2.7]{ShaulFinDim}, the fourth from tensor-hom adjunction, and the last from another application of \Cref{prp:widthDual}.
\end{proof}
The following result is a DG version of \cite[Proposition 14.4.1]{LarsBook}.
\begin{proposition}\label{prp:14.4.1}
Let $I$ be a proper ideal of $\HR$. Then,
\begin{enumerate}
\item If $M\in\Dbb$, then 
\[
\width_R(I,\LLam[I]{(M)})=\width_R(I,M)=\width_R(I,\RGam[I]{(M)}).
\]
\item If $M\in\Dba$, then
\[
\depth_R(I,\LLam[I]{(M)})=\depth_R(I,M)=\depth_R(I,\RGam[I]{(M)}).
\]
\end{enumerate}
\end{proposition}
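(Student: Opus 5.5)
We reduce both parts to the corresponding statements over the local ring $\HR$, which are \cite[Proposition 14.4.1]{LarsBook}, using the base change and cobase change compatibilities established above.

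\textbf{Part (1).} Let $M\in\Dbb$. By \Cref{lem:4.2}, $\LLam[I]{(M)}\in\Dbb$, so its width is defined. Combining \eqref{eq:widthBaseChange} with \Cref{lem:4.2} gives
\begin{align*}
\width_R(I,\LLam[I]{(M)})&=\width_{\HR}(I,\HR\lotimes_R\LL[I]{R}{(M)})\\
&=\width_{\HR}(I,\LL[I]{\HR}{(\HR\lotimes_RM)})\\
&=\width_{\HR}(I,\HR\lotimes_RM)=\width_R(I,M).
\end{align*}
The third equality is the ring case \cite[Proposition 14.4.1]{LarsBook}; it applies because $\HR\lotimes_RM$ is bounded below by \cite[(1.3)]{ShaulCM}.

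For torsion, note that $\RGam[I]{(M)}\simeq\Tel(R_0;\x)\otimes_{R_0}M$ is bounded below, since the telescope is bounded. Base change gives $\HR\lotimes_R\RG[I]{R}{(M)}\simeq\RG[I]{\HR}{(\HR\lotimes_RM)}$, by the identity $\HR\lotimes_RT\simeq\Tel(\HR;\overline{\x})$ or \cite[Proposition 2.11]{ShaulCM}. The same chain as above, together with the ring case, then gives the equality. Alternatively, one can argue via \Cref{thm:width}: there are isomorphisms $K(\x)\lotimes_R\RGam[I]{(M)}\simeq\RGam[I]{(K(\x)\lotimes_RM)}\simeq K(\x)\lotimes_RM$, because the homology of $K(\x)\lotimes_RM$ is annihilated by a power of $I$, hence $I$-torsion.

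\textbf{Part (2).} Let $M\in\Dba$. We use $\depth_R(I,M)=\depth_{\HR}(I,\RHom_R(\HR,M))$ (\cite[Proposition 3.2]{ShaulCM}, stated for $\m$, but the argument is identical for $I$ by adjunction: $\RHom_R(\HR/I,M)\simeq\RHom_{\HR}(\HR/I,\RHom_R(\HR,M))$). For completion we need a version of \Cref{thm:LLamCobaseChange} for $I$ in place of $\m$. Its proof uses only a sequence generating the ideal, so it gives
\[
\RHom_R(\HR,\LL[I]{R}{(M)})\simeq\LL[I]{\HR}{(\RHom_R(\HR,M))}.
\]
For torsion, the analogous isomorphism $\RHom_R(\HR,\RGam[I]{(M)})\simeq\RG[I]{\HR}{(\RHom_R(\HR,M))}$ follows from the swap of $\Tel\otimes-$ with $\RHom_R(\HR,-)$, which is valid because $\Tel(R_0;\x)$ is a bounded complex of free modules. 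We also need $\LLam[I]{(M)}$ and $\RGam[I]{(M)}$ to lie in $\Dba$: torsion is a bounded tensor, and completion is $\Hom$ from a bounded free complex into a bounded above complex. The ring case then finishes the argument exactly as in part (1).

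\textbf{Main obstacle.} The real work is the cobase-change isomorphism for torsion in part (2), i.e.\ commuting $\RHom_R(\HR,-)$ past $\Tel\otimes-$. If the swap is delicate, the alternative is to use that $\depth_R(I,-)$ can be read off from $\RHom_R(K(\x),-)$, as in \cite{ShaulKoszul}, together with $\RHom_R(K(\x),\RGam[I]{M})\simeq\RHom_R(K(\x),M)$. This last isomorphism holds because $\RHom_R(K(\x),-)$ is tensoring with a shifted $K(\x)$, whose homology is $I$-torsion. Dually, $\RHom_R(K(\x),\LLam[I]{M})\simeq\RHom_R(K(\x),M)$, because $K(\x)\lotimes_RM$ is already derived $I$-complete.
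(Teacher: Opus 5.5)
Your argument is correct in substance, but it follows a different route from the paper. The paper never leaves $R$. With $K$ the Koszul DG-module on $n$ generators of $I$, it uses that $K$ is derived $I$-torsion and \cite[Proposition 2.11]{ShaulCompletion} to get $\RHom_R(K,\LLam[I]{(M)})\simeq\RHom_R(K,M)\simeq\RHom_R(K,\RGam[I]{(M)})$. By self-duality and tensor evaluation, $\RHom_R(K,N)\simeq\Sigma^{-n}(K\lotimes_RN)$, these become $K\lotimes_R\LLam[I]{(M)}\simeq K\lotimes_RM\simeq K\lotimes_R\RGam[I]{(M)}$. Width is then read off from \Cref{thm:width} and depth from \cite[Proposition 3.17]{ShaulKoszul}. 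So one chain of isomorphisms gives all four equalities, and the ring case is never invoked.

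You instead transfer to $\HR$ and quote \cite[Proposition 14.4.1]{LarsBook}. This makes the result a formal consequence of the classical one. The price is four separate transfer isomorphisms (base and cobase change, each for completion and torsion). The paper supplies only \Cref{lem:4.2} and the $\m$-case of \Cref{thm:LLamCobaseChange}, whose proof does work verbatim for $I$. The fallback in your last paragraph is essentially the paper's proof. The one difference is that the paper gets the completion isomorphism from Greenlees--May duality, not from derived $I$-completeness of $K(\x)\lotimes_RM$, which would need a separate criterion.

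One justification needs repair: the torsion cobase change $\RHom_R(\HR,\RGam[I]{(M)})\simeq\RG[I]{\HR}{(\RHom_R(\HR,M))}$.
\begin{enumerate}
\item This is a tensor-evaluation statement, not a Hom--Hom swap like the one in \Cref{thm:LLamCobaseChange}.
\item It does not follow merely from $\Tel(R_0;\x)$ being a bounded complex of free modules. Those free modules have countably infinite rank, and $\RHom_R(\HR,-)$ need not commute with infinite direct sums, because $\HR$ need not be compact over $R$.
\item The isomorphism does hold for $M\in\Dba$. The reason is that $\HR$ admits a semifree resolution $P$ with finitely many basis elements in each degree, all in nonnegative degrees. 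Hence, on DG-modules concentrated in degrees at most a fixed bound, $\Hom_R(P,-)$ is degreewise a finite product and commutes with the sums in $\Tel(R_0;\x)\otimes_{R_0}-$.
\end{enumerate}
More simply, you can avoid cobase change in part (2) altogether. Since $\HR/I$ is derived $I$-torsion, \cite[Proposition 2.11]{ShaulCompletion} gives $\RHom_R(\HR/I,\LLam[I]{(M)})\simeq\RHom_R(\HR/I,M)\simeq\RHom_R(\HR/I,\RGam[I]{(M)})$, and the depth equalities follow immediately from the definition.
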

\begin{proof}Let $K$ be the Koszul complex on a sequence of $n$ elements that
generates $I$. Since $K$ is derived $I$-torsion, it follows from \cite[Proposition 2.11]{ShaulCompletion} that there are isomorphisms
\[
\RHom_R(K,\LLam[I]{(M)})\simeq\RHom_R(K,M)
\simeq\RHom_R(K,\RGam[I]{(M)}).
\]
By \cite[Proposition 2.10]{ShaulKoszul} $K\in\Dfb$ and $\projdim_R K<\infty$. Thus, for every
$N\in\D(R)$, tensor evaluation gives
\[
\RHom_R(K,R)\lotimes_R N
\xrightarrow{\ \simeq\ }\RHom_R(K,N)
\]
by the tensor-evaluation part~(i) of \cite[Lemma~2.7]{ShaulFinDim}.
Moreover, self-duality gives $\RHom_R(K,R)\simeq\Sigma^{-n}K$ by
\cite[Proposition 2.12]{ShaulKoszul}. Consequently,
\[
\RHom_R(K,N)\simeq\Sigma^{-n}(K\lotimes_R N).
\]
Applying this to the three terms above and shifting by $n$ yields
\[
K\lotimes_R\LLam[I]{(M)}\simeq K\lotimes_R M
\simeq K\lotimes_R\RGam[I]{(M)}.
\]
The width equalities now follow from \Cref{thm:width},
which gives $\width_R(I,M)=\inf(K\lotimes_R M)$.
The depth equalities follow from \cite[Proposition 3.17]{ShaulKoszul}.
\end{proof}

The next result, together with \Cref{thm:width}, provides a full DG version of \cite[Theorem 16.2.3]{LarsBook}.

\begin{proposition}
Let $I$ be a proper ideal of $\HR$ and $M\in\Dbb$, then
\[
\width_RM=\width_R\LLam[I]{(M)}=\width_R\RGam[I]{(M)}.
\]
\end{proposition}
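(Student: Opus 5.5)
The plan is to reduce the $\m$-width to a Koszul computation, as in the proof of \Cref{prp:14.4.1}, and then use that the Koszul complex on generators of $\m$ is derived $I$-torsion for every proper ideal $I$. Let $\x$ generate $\m$ and set $K=K(\x)$. By \Cref{thm:width}, $\width_RN=\inf(K\lotimes_RN)$ for every $N\in\Dbb$. So it suffices to show that
\[
K\lotimes_R\LLam[I]{(M)}\simeq K\lotimes_RM\simeq K\lotimes_R\RGam[I]{(M)},
\]
and that both $\LLam[I]{(M)}$ and $\RGam[I]{(M)}$ lie in $\Dbb$, so that \Cref{thm:width} applies to them.

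First, boundedness. \Cref{lem:4.2} gives $\LLam[I]{(M)}\in\Dbb$. For $\RGam[I]{(M)}\simeq\Tel(R_0;\widetilde\x)\otimes_{R_0}M$, the telescope is a bounded complex of free modules concentrated in degrees $-n',\dots,0$. Tensoring it with a bounded below semifree resolution of $M$ therefore gives a bounded below DG-module, so $\inf\RGam[I]{(M)}\geq\inf M-n'$.

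Second, the torsion claim. Since $I\subseteq\m$, every $\HH{i}{K}$ is killed by $\m$, hence is $I$-torsion. By the criterion recalled in Section~2 (\cite[Proposition~2.9]{ShaulCompletion}), $K$ is therefore derived $I$-torsion. Then \cite[Proposition~2.11]{ShaulCompletion} (the MGM-type adjunction) gives
\[
\RHom_R(K,\LLam[I]{(M)})\simeq\RHom_R(K,M)\simeq\RHom_R(K,\RGam[I]{(M)}).
\]
Exactly as in \Cref{prp:14.4.1}, tensor evaluation and Koszul self-duality give $\RHom_R(K,N)\simeq\Sigma^{-n}(K\lotimes_RN)$. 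Applying this to all three terms yields the desired isomorphisms of $K\lotimes_R-$. Taking $\inf$ and invoking \Cref{thm:width} finishes the proof.

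The main obstacle is checking that \cite[Proposition~2.11]{ShaulCompletion} applies to a derived $I$-torsion object that is not the Koszul complex on generators of $I$ itself. As a fallback, one can instead use $\RGam[I]{K}\simeq K$ together with the adjunction $\RHom_R(\RGam[I]{A},B)\simeq\RHom_R(A,\LLam[I]{B})$ and $\RGam[I]\circ\LLam[I]\simeq\RGam[I]$. Alternatively, note that $K\in\thick_R\kk$ and that the equalities are straightforward for $\kk$: since $\kk$ is derived $I$-torsion, $\kk\lotimes_R\RGam[I]{M}\simeq\RGam[I]{(\kk)}\lotimes_RM\simeq\kk\lotimes_RM$.
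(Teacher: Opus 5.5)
Your argument is correct, but it is organized differently from the paper's. The paper first establishes the functor identities $\LLam[\m]{(\LLam[I]{(M)})}\simeq\LLam[\m]{(M)}$ and $\RGam[\m]{(\RGam[I]{(X)})}\simeq\RGam[\m]{(X)}$. These use three facts: derived $\m$-torsion objects such as $\RGam[\m]{(R)}$ are derived $I$-torsion, Greenlees--May duality holds, and the two torsion functors commute. The paper then applies \Cref{prp:14.4.1} at the ideal $\m$ as a black box.

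You instead rerun the proof of \Cref{prp:14.4.1} with the Koszul complex on generators of $\m$, observing that it is derived $I$-torsion. The Greenlees--May isomorphisms then give $K\lotimes_R\LLam[I]{(M)}\simeq K\lotimes_RM\simeq K\lotimes_R\RGam[I]{(M)}$ in a single step. Your route is shorter and needs the torsion observation for only one compact object. The same isomorphism also yields \Cref{prp:depthLLam} for $M\in\Dba$ via \cite[Proposition 3.17]{ShaulKoszul}. The paper's route produces the composite-functor identities, which say more than the width equalities.

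The obstacle you flag is not one. Greenlees--May duality holds for arbitrary objects, and the paper's proof of \Cref{prp:14.4.1} uses only that $K$ is derived $I$-torsion.

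Your claim that $\m$ kills each $\HH{i}{K}$ deserves a line of justification over a DG-ring. One option: multiplication by each lift $\widetilde x_i$ on $R\otimes_{R_0}K^{R_0}(\widetilde\x)$ is null-homotopic. Another: $K\in\thick_R\kk$ shows $K$ is derived $\m$-torsion, which already suffices.

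Finally, your last alternative finishes the proof without $K$. Since $\width_RN=\inf(\kk\lotimes_RN)$ by definition, the chain
\[
\kk\lotimes_R\LLam[I]{(M)}\simeq\kk\lotimes_R\RGam[I]{(\LLam[I]{(M)})}\simeq\kk\lotimes_R\RGam[I]{(M)}\simeq\kk\lotimes_RM
\]
gives both equalities directly, once boundedness below is checked. This is the same computation as in the proof of \Cref{cor:ampLLam}, with $I$ in place of $\m$.
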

\begin{proof}
Since $I\subseteq\m$, every
derived $\m$-torsion DG-module is derived $I$-torsion, and
\cite[Proposition 2.9]{ShaulCompletion} gives for every $X\in\D(R)$
\[
\RGam[I]{(\RGam[\m]{(X)})}\simeq\RGam[\m]{(X)}.
\]
The tensor description of derived torsion in \cite[Proposition 2.4]{ShaulCompletion} shows
that the two derived torsion functors commute. Consequently,
\[
\RGam[\m]{(\RGam[I]{(X)})}\simeq\RGam[\m]{(X)}.
\]

Applying \cite[Proposition 2.11]{ShaulCompletion}, we obtain
\begin{align*}
\LLam[\m]{(\LLam[I]{(M)})}
&\simeq\RHom_R(\RGam[\m]{(R)},\LLam[I]{(M)})\\
&\simeq\RHom_R(\RGam[I]{(\RGam[\m]{(R)})},M)\\
&\simeq\RHom_R(\RGam[\m]{(R)},M)\\
&\simeq\LLam[\m]{(M)}.
\end{align*}

The functors in question preserve boundedness below by
\cite[Proposition 2.4]{ShaulCompletion}. We may therefore apply
\Cref{prp:14.4.1} at $\m$ to obtain
\begin{align*}
\width_R\LLam[I]{(M)}
&=\width_R\LLam[\m]{(\LLam[I]{(M)})}
=\width_R\LLam[\m]{(M)}
=\width_R M,\\
\width_R\RGam[I]{(M)}
&=\width_R\RGam[\m]{(\RGam[I]{(M)})}
=\width_R\RGam[\m]{(M)}
=\width_R M.\qedhere
\end{align*}
\end{proof}

A similar proof shows
\begin{proposition}\label{prp:depthLLam}
Let $I$ be a proper ideal of $\HR$ and $M\in\Dba$, then
\[
\depth_RM=\depth_R\LLam[I]{(M)}=\depth_R\RGam[I]{(M)}.
\]
\end{proposition}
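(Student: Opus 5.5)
The plan is to mirror the proof of the width analogue just given, exchanging the roles of torsion and completion and replacing \Cref{prp:14.4.1}(1) by \Cref{prp:14.4.1}(2). Since $M\in\Dba$, the first task is to check that $\LLam[I]{(M)}$ and $\RGam[I]{(M)}$ again lie in $\Dba$, so that their depths are defined. For $\RGam[I]{}$ this follows from the telescope description $\RGam[I]{(M)}\simeq\Tel(R_0;\widetilde\x)\otimes_{R_0}M$, because the telescope is a bounded complex concentrated in degrees $0$ down to $-n$. For $\LLam[I]{}$ I would use $\Hom_{R_0}(\Tel(R_0;\widetilde\x),M)$, which only raises degrees by at most $n$, or simply cite \cite[Proposition 2.4]{ShaulCompletion} as in the previous proof.

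Next I would reuse the two identifications established in the preceding proof, valid for every $X\in\D(R)$:
\[
\LLam[\m]{(\LLam[I]{(X)})}\simeq\LLam[\m]{(X)},\qquad
\RGam[\m]{(\RGam[I]{(X)})}\simeq\RGam[\m]{(X)}.
\]
The first was derived there for $M\in\Dbb$, but the chain of isomorphisms via \cite[Proposition 2.11]{ShaulCompletion} uses no boundedness, so it holds here verbatim.

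Then I would apply \Cref{prp:14.4.1}(2) with the ideal $\m$ three times: to $M$, to $\LLam[I]{(M)}$, and to $\RGam[I]{(M)}$. This gives
\begin{align*}
\depth_R\LLam[I]{(M)}&=\depth_R\LLam[\m]{(\LLam[I]{(M)})}=\depth_R\LLam[\m]{(M)}=\depth_RM,\\
\depth_R\RGam[I]{(M)}&=\depth_R\RGam[\m]{(\RGam[I]{(M)})}=\depth_R\RGam[\m]{(M)}=\depth_RM.
\end{align*}

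The only step needing any care is the boundedness-above check in the first paragraph. Everything else is formal once the composite-functor identities are transported from the previous proof.
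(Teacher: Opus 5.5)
Your proposal is correct and matches the paper's approach: the paper's proof is just ``a similar proof shows'' after the width analogue, which is exactly your argument. That is, you transport the identities $\LLam[\m]{(\LLam[I]{(M)})}\simeq\LLam[\m]{(M)}$ and $\RGam[\m]{(\RGam[I]{(M)})}\simeq\RGam[\m]{(M)}$, which need no boundedness, and then apply \Cref{prp:14.4.1}(2) at $\m$ three times. Your telescope check that both functors preserve $\Dba$, with $\sup\RGam[I]{(M)}\leq\sup M$ and $\sup\LLam[I]{(M)}\leq\sup M+n$, supplies the one detail the paper leaves implicit.
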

\section{Amplitude inequalities}

In this section, we use the properties of width established above to prove amplitude inequalities and derive applications to DG-modules. We begin with a dual intersection theorem (\Cref{thm:4.6dual}), which yields an amplitude inequality for the derived completion of derived Hom complexes (\Cref{thm:4.7dual}). We then use duality to obtain a corresponding inequality for derived tensor products (\Cref{cor:ampLLam}) and deduce a DG version of Iversen’s inequality (\Cref{cor:iversen}). Finally, we give a criterion for a DG-module to be an Auslander DG-module (\Cref{def:auslander} and \Cref{cor:auslander}) and establish a DG version of Bass’ conjecture (\Cref{cor:bass}), treating both DG-modules with finitely generated homology and derived $\m$-complete DG-modules.

We start with one of the main results of this section, which provides a dual to the intersection theorem proved in \cite[Theorem 4.6]{FN}.
\begin{theorem}\label{thm:4.6dual}
Let $X\in\Dfb$ with $X\not\simeq0$ and $Y\in\Dba$ with $\width_RY<\infty$ and $\injdim_RY<\infty$. Then
\[
\sup\LLam[\m]{(\RHom_R(X,Y))}\geq\ld_RX+\width_RY-\depth R.
\]
\end{theorem}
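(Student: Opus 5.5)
The plan is to reduce the statement, via Matlis duality with respect to $E$, to the intersection theorem \cite[Theorem~4.6]{FN} for derived torsion of derived tensor products. I expect that theorem to say: for $0\not\simeq X\in\Dfb$ and $Z\in\Dbb$ with $\flatdim_RZ<\infty$ and $\depth_RZ<\infty$,
\[
-\inf\RGam[\m]{(X\lotimes_RZ)}\geq\ld_RX+\depth_RZ-\depth R.
\]
Set $N=\RHom_R(X,Y)$ and $Z=\RHom_R(Y,E)$. The first step is to check that $Z$ satisfies these hypotheses.
\begin{enumerate}
\item Since $Y\in\Dba$, the homology formula for $\RHom_R(-,E)$ gives $Z\in\Dbb$.
\item Since $\injdim_RY<\infty$, the standard Matlis duality argument via the Tor/Ext duality $\Hom_{\HR}(\Tor_i^R(M,Z),E_{\HR})\cong\Ext^i_R(M,Y)$ gives $\flatdim_RZ<\infty$ (compare \cite[Lemma~7.12]{ShaulFinDim}).
\item By \Cref{prp:widthDual}(2) we have $\depth_RZ=\width_RY$, which is finite by hypothesis.
\end{enumerate}

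The second step rewrites the completion. By \cite[Proposition~2.11]{ShaulCompletion} and adjunction,
\[
\LLam[\m]{(N)}\simeq\RHom_R(\RGam[\m]{(R)},\RHom_R(X,Y))\simeq\RHom_R(\RGam[\m]{(R)}\lotimes_RX,Y)\simeq\RHom_R(\RGam[\m]{(X)},Y).
\]
Since $\RHom_R(-,E)$ detects nonvanishing homology and reverses degrees, $\sup\LLam[\m]{(N)}=-\inf\RHom_R(\LLam[\m]{(N)},E)$. Hom-evaluation then should give
\[
\RHom_R(\RHom_R(\RGam[\m]{(X)},Y),E)\simeq\RGam[\m]{(X)}\lotimes_R\RHom_R(Y,E)\simeq\RGam[\m]{(X\lotimes_RZ)}.
\]
Combining these,
\[
\sup\LLam[\m]{(\RHom_R(X,Y))}=-\inf\RGam[\m]{(X\lotimes_RZ)}.
\]

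The third step applies \cite[Theorem~4.6]{FN} to $X$ and $Z$ and substitutes $\depth_RZ=\width_RY$. This yields exactly $\sup\LLam[\m]{(\RHom_R(X,Y))}\geq\ld_RX+\width_RY-\depth R$.

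The main obstacle is justifying the hom-evaluation isomorphism in the second step, because $\RGam[\m]{(X)}$ does not have finitely generated homology. I would first write $\RGam[\m]{(X)}\simeq T\lotimes_RX$ with $T=R\otimes_{R_0}\Tel(R_0;\x)$, where $\projdim_RT<\infty$ as in the proof of \Cref{lem:4.2}. I would then evaluate in two stages. First, $\RHom_R(T\lotimes_RX,Y)\simeq\RHom_R(T,\RHom_R(X,Y))$ by adjunction. Second, apply the hom-evaluation part of \cite[Lemma~2.7]{ShaulFinDim} for $X\in\Dfb$ and $\injdim_RY<\infty$, which gives $\RHom_R(\RHom_R(X,Y),E)\simeq X\lotimes_RZ$. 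It then remains to use the finite projective dimension of $T$ to pull it out of the dual, i.e. to show $\RHom_R(\RHom_R(T,W),E)\simeq T\lotimes_R\RHom_R(W,E)$ for $W=\RHom_R(X,Y)$. I would try the tensor-evaluation part of \cite[Lemma~2.7]{ShaulFinDim}, or a d\'evissage on the bounded free complex $\Tel$.

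If that route fails, the fallback is to prove the equality of suprema directly. I would transfer to $\HR$ using \Cref{thm:LLamCobaseChange}, \Cref{prp:widthCobaseChange}, and the classical result over local rings.
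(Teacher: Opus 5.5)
Your second step fails, and the problem is deeper than justification. The identity $\sup\LLam{(\RHom_R(X,Y))}=-\inf\RGam{(X\lotimes_RZ)}$ is false under the hypotheses of the theorem, so the reduction to \cite[Theorem~4.6]{FN} through $E$ breaks down.

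In general, Matlis duality exchanges torsion and completion in one direction only: $\RHom_R(\RGam{(W)},E)\simeq\LLam{(\RHom_R(W,E))}$, by adjunction and Greenlees--May. The direction you need, $\RHom_R(\LLam{(W)},E)\simeq\RGam{(\RHom_R(W,E))}$, is hom-evaluation against $\RGam{(R)}\simeq R\otimes_{R_0}\Tel(R_0;\x)$. The terms of this complex are infinitely generated free modules, so $\Hom$ out of them produces products, and $\RHom_R(-,E)$ does not turn products back into sums. Neither tensor evaluation nor a d\'evissage on $\Tel$ can fix this. The correct direction only gives $-\inf\RGam{(X\lotimes_RZ)}=\sup\LLam{(\RHom_R(X,Y^{\vee\vee}))}$, where $Y^{\vee\vee}=\RHom_R(\RHom_R(Y,E),E)$ is the Matlis bidual, not $Y$.

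Here is a counterexample within the hypotheses. Let $R=A$ be a discrete valuation ring with uniformizer $t$, let $X=A$, and let $Y=\bigoplus_{n\geq1}A/t^nA$.
\begin{enumerate}
\item Since $A$ has global dimension one, $\injdim_AY\leq1$, and $\width_AY=0$.
\item Since $\bigcap_m t^mY=0$, $Y$ has no nonzero divisible submodule, so $\Hom_A(E,Y)=0$. As $\LLam{(Y)}\simeq\Sigma\RHom_A(E,Y)$, it is concentrated in degree $0$. It is nonzero because $Y/tY\neq0$. Hence $\sup\LLam{(\RHom_A(X,Y))}=0$, which is exactly the bound $\ld_AA+\width_AY-\depth A$.
\item On the other hand, $Z=\Hom_A(Y,E)\cong\prod_nA/t^nA$, and $\HH{-1}{\RGam{(Z)}}\cong Z\otimes_AE\neq0$, since $t^j(1,1,\ldots)\notin t^{j+1}Z$ for every $j$. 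Hence $-\inf\RGam{(X\lotimes_AZ)}=1$.
\end{enumerate}
So the two sides differ, and the inequality between them runs the wrong way for transferring a lower bound from the torsion side to the completion side.

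What is needed is a direct bound on the completion of the Hom complex, which is what the paper does.
\begin{enumerate}
\item Check that $\LLam{(\RHom_R(X,Y))}\in\Dba$ via the bounded telescope, and use $\sup M=\sup\RHom_R(\HR,M)$ for $M\in\Dba$ \cite[(1.2)]{ShaulCM}.
\item Move the completion past $\RHom_R(\HR,-)$ by \Cref{thm:LLamCobaseChange}.
\item By adjunction, rewrite it as the completion over $\HR$ of $\RHom_{\HR}(U,V)$, with $U=\HR\lotimes_RX$ and $V=\RHom_R(\HR,Y)$.
\item Reduce to the homology modules of $U$ using $\sup\RHom_{\HR}(U,\LL[\m]{\HR}{(V)})=\sup_i\{\sup\RHom_{\HR}(\H_i(U),\LL[\m]{\HR}{(V)})-i\}$ \cite[Proposition~17.6.14]{LarsBook}.
\item Apply the module-level inequality ($*$) from the proof of \cite[Theorem~2.3]{amp}.
\item Translate back with \Cref{prp:widthCobaseChange} and $\ld_RX=\dim_{\HR}U$ \cite[Proposition~2.13]{ShaulCM}.
\end{enumerate}
Your fallback names some of these ingredients, but as written it aims at the false equality of suprema and gives no steps.

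A smaller point: $\width_RY$ and \Cref{prp:widthDual}(2) require $Y\in\Dbb$. You should derive this from $\injdim_RY<\infty$; see \cite[Lemma~2.1(2)]{yang}.
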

\begin{proof}
We first note that $Y$ is bounded by \cite[Lemma 2.1(2)]{yang}.
The asserted inequality follows from the string of (in)equalities below
\begin{align*}
\sup\LL[\m]{R}{(\RHom_R(X,Y))}&=\sup\RHom_R(\HR,\LL[\m]{R}{(\RHom_R(X,Y))})\\
&=\sup\LL[\m]{\HR}{(\RHom_R(\HR,\RHom_R(X,Y)))}\\
&=\sup\LL[\m]{\HR}{(\RHom_{\HR}(\HR\lotimes_RX,\RHom_R(\HR,Y)))}\\
&\geq\dim_{\HR} X\lotimes_R\HR+\width_{\HR}\RHom_R(\HR,Y)-\depth\HR\\
&=\ld_RX+\width_RY-\depth R+\depth \HR-\depth\HR\\
&=\ld_RX+\width_RY-\depth R
\end{align*}
where the first equality follows from \cite[(1.2)]{ShaulCM} since $\LL[\m]{R}{(\RHom_R(X,Y))}\in\Dba$ because $\RHom_R(X, Y) \in \Dba$ by \cite[Lemma 3.2]{ShaulFinDim}, and $\LL[\m]{R}{(\RHom_R(X,Y))} \simeq \Hom_{R_0}(\Tel(R_0; \mathbf{x}), \RHom_R(X, Y))$ for a suitable lift $\mathbf{x}$ of generators of $\m$ by \cite[Proposition 2.4]{ShaulCompletion}. Since $\Tel(R_0, \mathbf{x})$ is bounded, another application of \cite[Lemma 3.2]{ShaulFinDim} shows that $\LL[\m]{R}{(\RHom_R(X,Y))}\in\Dba$. The second equality follows from \Cref{thm:LLamCobaseChange}.
The third equality is simply derived tensor-Hom adjunction, and the fourth equality follows from \Cref{prp:widthCobaseChange} and \cite[Proposition 2.13]{ShaulCM}. 

It remains to justify the inequality. Set $U=\HR\lotimes_RX$ and $V=\RHom_R(\HR,Y)$. First we notice that $U\in\Dfbb[\HR]$. Moreover, since $V\in\Dba[\HR]$ and has finite injective dimension by \cite[Corollary 2.3(iii)]{ShaulFinDim}, it follows that $V\in\Db$. Moreover $V$ has finite width by \Cref{prp:widthCobaseChange}. Therefore $\RHom_{\HR}(\kk,V)$ is bounded and nonzero by \Cref{prp:finiteDepth}. For each nonzero $\H_i(U)$ we can apply the right inequality in \cite[($*$) in the proof of Theorem 2.3]{amp} to get
\[
\sup\LL[\m]{\HR}{(\RHom_{\HR}(\H_i(U),V))}
\geq\dim_{\HR}\H_i(U)+\width_{\HR}V-\depth\HR.
\]
Since $\LL[\m]{\HR}{(V)}$ is bounded above, we can apply \cite[Proposition 17.6.14]{LarsBook} in the computation below
\begin{align*}
&\sup\LL[\m]{\HR}{(\RHom_{\HR}(U,V))}\\
&\quad=\sup\RHom_{\HR}(U,\LL[\m]{\HR}{(V)})\\
&\quad=\sup_{i\in\mathbb Z}
\bigl\{\sup\RHom_{\HR}(\HH{i}{U},\LL[\m]{\HR}{(V)})-i\bigr\}\\
&\quad=\sup_{i\in\mathbb Z}
\bigl\{\sup\LL[\m]{\HR}{(\RHom_{\HR}(\HH{i}{U},V))}-i\bigr\}\\
&\quad\geq\sup_{i\in\mathbb Z}
\bigl\{\dim_{\HR}\HH{i}{U}-i\bigr\}
+\width_{\HR}V-\depth\HR\\
&\quad=\dim_{\HR}U+\width_{\HR}V-\depth\HR.
\end{align*}
This is the desired inequality.
\end{proof}

The next result is a DG version of the right inequality in \cite[Theorem 2.3]{amp} and it is a dual statement to \cite[Corollary 4.7]{FN}. It generalizes \cite[Proposition 4.5]{yang} in view of \Cref{prp:ampCompletion}.
\begin{theorem}\label{thm:4.7dual}
Let $X\in\Dfba$ with $X\not\simeq0$ and $Y\in\Db$ with $\width_RY<\infty$ and $\injdim_RY<\infty$. Then
\[
\amp\LLam[\m]{(\RHom_R(X,Y))}\geq\amp\RGam[\m](X).
\]
\end{theorem}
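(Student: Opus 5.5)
The inequality will follow from two separate bounds, in the same spirit as the derivation of \cite[Corollary 4.7]{FN} from \cite[Theorem 4.6]{FN}:
\[
\sup\LLam[\m]{(\RHom_R(X,Y))}\geq\ld_RX+\width_RY-\depth R \quad\text{and}\quad
\inf\LLam[\m]{(\RHom_R(X,Y))}\leq \depth_RX+\width_RY-\depth R .
\]
Since \cite[Proposition 3.3]{ShaulCM} gives $\sup\RGam[\m](X)=-\depth_RX$, and (as in \cite{ShaulCM}) $\inf\RGam[\m](X)=-\ld_RX$ for $X\in\Dfba$, subtracting the two bounds yields $\amp\LLam[\m]{(\RHom_R(X,Y))}\geq\ld_RX-\depth_RX=\amp\RGam[\m](X)$.

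\textbf{The upper bound on $\inf$.} First, $Y$ is bounded, and $\RHom_R(X,Y)$ lies in $\Dbb$ because $X\in\Dba$ and $\injdim_RY<\infty$. By \Cref{thm:width}, or by \Cref{prp:14.4.1} together with \Cref{thm:widthInf} applied to the derived complete object $\LLam[\m]{(\RHom_R(X,Y))}$, we get
\[
\inf\LLam[\m]{(\RHom_R(X,Y))}=\width_R\RHom_R(X,Y).
\]
If $X\in\Dfb$, then $\RHom_R(X,Y)\in\Db$ and the derived width formula \Cref{thm:dwf} gives $\width_R\RHom_R(X,Y)=\depth_RX+\width_RY-\depth R$. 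This is an equality, so in particular the desired upper bound.

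\textbf{The lower bound on $\sup$.} When $X\in\Dfb$ this is exactly \Cref{thm:4.6dual}. Note that $\amp\RGam[\m](X)$ is finite: $\depth_RX<\infty$ since $X\not\simeq 0$ has finitely generated homology, and $\ld_RX\leq \ld R-\inf X$.

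\textbf{Removing the boundedness of $X$.} The remaining issue, which I expect to be the main obstacle, is that $X$ is only assumed in $\Dfba$, not $\Dfb$. I would reduce to the bounded case by replacing $X$ with truncations $X_{\geq -n}$. The $\sup$ side passes to the limit because $\ld_RX=\sup_i\{\dim\HH{i}{X}-i\}$ is a supremum over degrees, and the hom-computation used in the proof of \Cref{thm:4.6dual} (via \cite[Proposition 17.6.14]{LarsBook}) already works degreewise for $U=\HR\lotimes_RX\in\Dfba[\HR]$. So I would simply rerun that proof, checking that $\LLam[\m]{(\RHom_R(X,Y))}$ remains bounded above thanks to $\injdim_RY<\infty$.

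For the $\inf$ side, I would use that $\depth_RX$ is detected in finitely many top degrees. Since $\injdim_RY<\infty$, the homology of $\RHom_R(X,Y)$ in a fixed low degree depends only on a bounded truncation of $X$, so $\width_R\RHom_R(X,Y)$ can be computed using $X_{\geq -n}$ for $n\gg0$, via the Koszul description $\inf(K\lotimes_R-)$ from \Cref{thm:width}. Moreover, $\depth_R X_{\geq -n}=\depth_RX$ for $n\gg 0$. Verifying this truncation argument carefully, in particular that the width computation indeed stabilizes, is the delicate step.
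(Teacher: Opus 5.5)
Your bounded case is exactly the paper's argument. With $Z=\LLam[\m]{(\RHom_R(X,Y))}$, the paper also bounds $\sup Z$ by \Cref{thm:4.6dual}, computes $\inf Z=\width_R\RHom_R(X,Y)=\depth_RX+\width_RY-\depth R$ via \Cref{thm:width} and \Cref{thm:dwf}, and subtracts.

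\textbf{The gap.} The gap is the passage from $\Dfba$ to $\Dfb$, and the route you sketch fails as stated.

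If $X\notin\Dbb$, then $\HH{i}{X}\neq0$ for arbitrarily negative $i$. Hence $\ld_RX=\infty$, and the $\sup$ bound you want is literally the assertion $\sup Z=\infty$. So ``checking that $Z$ remains bounded above'' cannot succeed. For example, over a local ring with $Y=E$ and $X=\bigoplus_{n\geq0}\Sigma^{-n}R$, one gets $Z\simeq\prod_{n\geq0}\Sigma^{n}\LLam[\m]{(E)}$, which is unbounded above.

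Nor can the proof of \Cref{thm:4.6dual} simply be rerun. Its first step, $\sup Z=\sup\RHom_R(\HR,Z)$, needs $Z\in\Dba$. Its degreewise computation is carried out for $U=\HR\lotimes_RX\in\Dfbb[\HR]$.

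Truncation does not obviously rescue this. Set $Z_n=\LLam[\m]{(\RHom_R(X_{\geq-n},Y))}$. Then $Z$ and $Z_n$ agree only in degrees below roughly $n-\injdim_RY$. \Cref{thm:4.6dual} only produces nonzero homology of $Z_n$ in some degree $\geq\ld_RX_{\geq-n}+\width_RY-\depth R$, and nothing places that degree inside the window of agreement.

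Your $\inf$-side truncation is beside the point. $Z\in\Dbb$ holds automatically, so in the unbounded case everything hinges on $\sup Z=\infty$.

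\textbf{The missing idea.} The paper reduces directly to $X\in\Dfb$. Since $\kk$ is derived $\m$-torsion, adjunction gives
\[
\RHom_R(\kk,Z)\simeq\RHom_\kk(\kk\lotimes_RX,\RHom_R(\kk,Y)).
\]
Here $\RHom_R(\kk,Y)$ is bounded, because $Y\in\Db$ and $\injdim_RY<\infty$. It is nonzero, because finite width gives finite depth by \Cref{prp:finiteDepth}.

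Because $K\in\thick_R\kk$ and Nakayama applies to $K\lotimes_RX$ for $X\in\Df$, one has $\kk\lotimes_RX\not\simeq0$, hence $Z\not\simeq0$. Now split into two cases.

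If $Z\notin\Db$, then $\amp Z=\infty$ and there is nothing to prove.

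If $Z\in\Db$, then $\sup\RHom_R(\kk,Z)\leq\sup Z<\infty$. From the display,
\[
\sup\RHom_R(\kk,Z)=\sup\RHom_R(\kk,Y)-\inf(\kk\lotimes_RX),
\]
so this forces $\kk\lotimes_RX\in\Dbb$. Then $K\lotimes_RX\in\Dbb$, and $X\in\Dbb$ by Nakayama.

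Thus either the inequality is trivial or $X\in\Dfb$, and in the latter case your bounded-case argument applies verbatim.
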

\begin{proof}
Let $Z\colonequals\LLam[\m]{(\RHom_R(X,Y))}$. We first show that $Z\not\simeq0$. Since $Y$ is bounded and of finite injective dimension, it follows that $\RHom_R(\kk,Y)$ is bounded as well. By \Cref{prp:finiteDepth} $\depth_R Y=-\sup\RHom_R(\kk,Y)$ is finite, and therefore $\RHom_R(\kk,Y)\not\simeq0$. We note that since $\kk$ is derived $\m$-torsion, one has the following isomorphism
\begin{equation}\label{eq:Z}
\RHom_R(\kk,Z)\simeq\RHom_\kk(\kk\lotimes_RX,\RHom_R(\kk,Y)).
\end{equation}
Let $K$ be the Koszul complex on a finite list of generators of $\m$.  Since $K\in\thick\kk$ it follows that $K\lotimes_RX\in\thick(\kk\lotimes_RX)$. If $\kk\lotimes_RX\simeq0$, then $K\lotimes_RX\simeq0$ and therefore $X\simeq0$. This shows that $\kk\lotimes_RX\not\simeq0$, and therefore by \eqref{eq:Z} it follows that $Z\not\simeq0$. We can assume that $Z\in\Db$, otherwise the desired inequality is obvious. 

Next we show that under these hypotheses one has $X\in\Dfb$. Since $Z\in\Db$, one has $\RHom_R(\kk,Z)\in\Dba$, and by \eqref{eq:Z} one has the following equality
\[
\sup\RHom_R(\kk,Z)=\sup\RHom_R(\kk,Y)-\inf \kk\lotimes_RX,
\]
and therefore $\kk\lotimes_R X\in \Dbb$. Since $K\lotimes_RX\in\thick(\kk\lotimes_RX)$, it follows that $K\lotimes_RX\in\Dbb$, and by Nakayama $X\in\Dbb$.

The reduction to $X\in\Dfb$ will allow us to use \Cref{thm:4.6dual} in the string of (in)equalities below
\begin{align*}
\amp\LLam[\m]{(\RHom_R(X,Y))}&=\sup\LLam[\m]{(\RHom_R(X,Y))}-\inf\LLam[\m]{(\RHom_R(X,Y))}\\
&=\sup\LLam[\m]{(\RHom_R(X,Y))}-\width_R\RHom_R(X,Y)\\
&=\sup\LLam[\m]{(\RHom_R(X,Y))}-\depth_RX-\width_RY+\depth R\\
&\geq\ld_RX+\width_RY-\depth R-\depth_RX-\width_RY+\depth R\\
&=\ld_RX-\depth_RX\\
&=\amp\RGam[\m]{(X)},
\end{align*}
where the second equality follows from \Cref{thm:width} since $\RHom_R(X,Y)\in\Dbb$ (this holds as $X \in \Db[R]$ and since $Y$ has finite injective dimension, see \cite[Definition 2.1]{ShaulFinDim}), the third equality from \Cref{thm:dwf}, the inequality from \Cref{thm:4.6dual}.
\end{proof}

We now provide a series of preliminary lemmas that will be used to prove the main results of this section. The next result establishes the tensor-evaluation isomorphism of \cite[Lemma 2.7]{ShaulFinDim} under different hypotheses.

\begin{lemma}\label{lem:tensoreval}
Let $M \in \Db$ with $\projdim_RM < \infty$, $X \in \Db$ with $\flatdim_RX < \infty$, and $N \in \Df$. Then the derived tensor evaluation morphism
\[
\tau: \RHom_R(M, X) \lotimes_R N \to \RHom_R(M, X \lotimes_R N)
\]
is an isomorphism.
\end{lemma}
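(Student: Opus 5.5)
We fix $M$ and $X$ and let $N$ vary over $\D(R)$. Both the source $\RHom_R(M,X)\lotimes_R N$ and the target $\RHom_R(M,X\lotimes_R N)$ are triangulated functors of $N$, and $\tau$ is a natural transformation between them. The first reduction is to the case where $M$ is replaced by a perfect DG-module, where there is nothing to prove.

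If $M\in\thick_R R$, then $\tau$ is an isomorphism for all $N$. Indeed, the class of $M$ for which $\tau$ is an isomorphism is thick, both sides being triangulated in $M$ and additive. For $M=R$ both sides are $X\lotimes_R N$ and $\tau$ is the identity.

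In general, $\projdim_R M<\infty$ with $M\in\Db$ does not make $M$ perfect, so we truncate in $N$ instead. Since $N\in\Df$ and $R$ is noetherian with bounded homology, we may pick a semifree resolution $F\to N$ whose semibasis is finite in each degree and bounded below in degree; this is possible at least when $N\in\Dfbb$, by \cite[Proposition 1.19]{tilting}. We then filter $F$ by finite skeleta $F^{\le p}$, which are perfect, and write $F=\operatorname{hocolim}_p F^{\le p}$. On each $F^{\le p}$, $\tau$ is an isomorphism: both sides commute with finite triangles in $N$, and for $N=R$ it is the identity. Derived tensor products commute with homotopy colimits, so the source becomes $\operatorname{hocolim}_p \RHom_R(M,X)\lotimes_R F^{\le p}$. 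For the target we need $\RHom_R(M,-)$ to commute with this particular hocolim. Here the boundedness enters: $X\lotimes_R F/F^{\le p}$ has $\inf \ge p+\inf X$ because $\flatdim_RX<\infty$ (the flat dimension condition gives $\inf(X\lotimes_R W)\ge \inf W-\flatdim$ type bounds; in fact only $X\in\Dbb$ is needed). Since $\projdim_R M<\infty$, $\RHom_R(M,W)$ has $\inf$ bounded below by $\inf W-\projdim_R M$. Hence $\RHom_R(M,X\lotimes_R F/F^{\le p})$ vanishes in any fixed degree for $p\gg0$, and the comparison is an isomorphism degreewise.

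It remains to handle $N\in\Df$ that is unbounded below. We write $N$ via the triangle from its soft truncations, $\tau_{\ge q}N\to N\to\tau_{<q}N$; truncations exist for nonnegative DG-rings. The part $\tau_{\ge q}N$ is covered by the previous step. For the part $\tau_{<q}N$ we use the finiteness hypotheses in the other direction. Since $X\in\Db$ has finite flat dimension, $\sup(X\lotimes_R W)\le\sup W+\flatdim_R X$. Then $\sup\RHom_R(M,V)\le\sup V-\inf M$, using $M\in\Db$ and \cite[Lemma 3.2]{ShaulFinDim}. Likewise $\sup\RHom_R(M,X)$ is finite, so $\sup(\RHom_R(M,X)\lotimes_R W)\le\sup W+\sup\RHom_R(M,X)$. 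Thus both sides applied to $\tau_{<q}N$ vanish in degrees above $q+c$ for a constant $c$. Letting $q\to-\infty$, $\tau$ on $N$ agrees with $\tau$ on $\tau_{\ge q}N$ in any given degree, so it is an isomorphism.

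The main obstacle is the middle step: controlling $\RHom_R(M,-)$ on the tails $X\lotimes_R F/F^{\le p}$ with $F$ built from finitely many cells per degree. I would first try to make the inf-bound precise using the definition of $\projdim$ applied to bounded truncations of the tail, with the flat dimension of $X$ used to keep these bounded.
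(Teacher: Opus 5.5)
Your proposal has a genuine gap in the case where $N$ is unbounded below. The estimate $\sup(\RHom_R(M,X)\lotimes_R W)\le\sup W+\sup\RHom_R(M,X)$ is false. A bounded DG-module need not have bounded-above tensor displacement, and the estimate fails even under the lemma's hypotheses. For an ordinary local ring $R$ with a nonzerodivisor $x\in\m$, take $M=R$ and $X=W=R/(x)$. Then $\sup(X\lotimes_R W)=1$, while $\sup W+\sup X=0$.

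What your truncation argument actually needs is a constant $c$ with $\sup(\RHom_R(M,X)\lotimes_R W)\le\sup W+c$ for all $W\in\Dba$. This amounts to $\flatdim_R\RHom_R(M,X)<\infty$. That is not a formal consequence of $\RHom_R(M,X)\in\Db$, and it is exactly the nontrivial input the paper's proof supplies. Without it you have no control over the tail $\RHom_R(M,X)\lotimes_R\tau_{<q}N$. So the unbounded-below case is not established, and that case is genuinely needed later, e.g.\ in \Cref{lem:DCtransfer} with $X\in\Df$.

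The repair comes from your own bounded-below step. For every ideal $I\subseteq\HR$ one has $\HR/I\in\Dfb$, so tensor evaluation holds at $N=\HR/I$. This gives $\sup(\RHom_R(M,X)\lotimes_R\HR/I)=\sup\RHom_R(M,X\lotimes_R\HR/I)\le\flatdim_RX-\inf M$, uniformly in $I$. The flat-dimension test over $\HR$ \cite[Theorem 8.3.11]{LarsBook}, combined with \cite[Corollary 2.3(ii)]{ShaulFinDim}, then yields $\flatdim_R\RHom_R(M,X)\le\flatdim_RX-\inf M$. Your truncation step then goes through with $c=\flatdim_R\RHom_R(M,X)$.

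This is the paper's argument; it cites \cite[Lemma 2.7(iv)]{ShaulFinDim} for the evaluation at $\HR/I$. Once both functors have bounded displacement, the paper simply invokes \cite[Theorem 2.11(2)]{tilting}. You instead prove that way-out statement by hand, via finite skeleta for bounded-below $N$ and truncations for the rest, which is a legitimate alternative.

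You also located the difficulty in the wrong place. The $\inf$-estimates in your skeleton step are the same displacement bounds the paper asserts for $\RHom_R(M,X\lotimes_R-)$. Your opening paragraph on perfect $M$ is never used.
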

\begin{proof}
If $M\simeq0$ or $X\simeq0$, the assertion is immediate, so we may
assume that both are nonzero.
Consider the triangulated functors $F\colon\RHom_R(M,X)\lotimes_R-$ and
$G\colon\RHom_R(M,X\lotimes_R-)$. The functor $G$ is homologically
bounded by the assumptions on $M$ and $X$.
(This follows from the same logic as \cite[Proposition A.30c]{LarsBook}
and \cite[Proposition A.31c]{LarsBook}).
Here and below, homological boundedness of a functor means finite
homological displacement, in the sense of \cite[Definition 2.1]{tilting}
with homological indexing. More explicitly, the same arguments give
\begin{align*}
\inf G(L)&\geq\inf X+\inf L-\projdim_RM
&&\text{for }L\in\Dbb,\\
\sup G(L)&\leq\flatdim_RX+\sup L-\inf M
&&\text{for }L\in\Dba.
\end{align*}
Thus these displacement bounds are uniform in $L$.

By the boundedness assumptions on $M$ and $X$ and by the assumption
that $\projdim_RM<\infty$, we have that $\RHom_R(M,X)\in\Db$.
We now show that $\flatdim_R\RHom_R(M,X)<\infty$.
We establish a uniform upper bound for
$\sup(\RHom_R(M,X)\lotimes_R\HR/I)$ as $I$ ranges over the ideals of
$\HR$.
Since $\HR/I$ is in $\Dfb$, we have that
$\RHom_R(M,X)\lotimes_R\HR/I\simeq\RHom_R(M,X\lotimes_R\HR/I)$
by \cite[Lemma 2.7(iv)]{ShaulFinDim}. It follows that
\begin{align*}
\sup\bigl(\RHom_R(M,X)\lotimes_R\HR/I\bigr)
&=\sup\RHom_R(M,X\lotimes_R\HR/I)\\
&\leq\sup(X\lotimes_R\HR/I)-\inf M\\
&\leq\flatdim_RX-\inf M.
\end{align*}
The first inequality follows from \cite[Lemma 3.2(i)]{ShaulFinDim},
and the second from the definition of flat dimension.
The upper bound is independent of $I$.
Applying \cite[Theorem 8.3.11]{LarsBook} to
$\HR\lotimes_R\RHom_R(M,X)$, and using
\cite[Corollary 2.3(ii)]{ShaulFinDim}, therefore gives
\[
\flatdim_R\RHom_R(M,X)\leq\flatdim_RX-\inf M<\infty.
\]
Since $\RHom_R(M,X)$ is homologically bounded and has finite flat
dimension, the functor $F$ is homologically bounded.
(This follows from the same logic as \cite[Proposition A.31c]{LarsBook}).
The same estimates for derived tensor products give
\begin{align*}
\inf F(L)&\geq\inf\RHom_R(M,X)+\inf L
&&\text{for }L\in\Dbb,\\
\sup F(L)&\leq\flatdim_R\RHom_R(M,X)+\sup L
&&\text{for }L\in\Dba.
\end{align*}
Hence both $F$ and $G$ have finite homological displacement.
The tensor-evaluation morphisms define a natural transformation
$\tau:F\to G$, and its component at $R$ is the canonical isomorphism
\[
\tau_R:\RHom_R(M,X)\lotimes_RR
\xrightarrow{\ \simeq\ }\RHom_R(M,X\lotimes_RR).
\]
Thus, applying \cite[Theorem 2.11(2)]{tilting} to the functors $F$ and
$G$ yields that $\tau$ is an isomorphism.
\end{proof}
The next result should be compared to \cite[Lemma 3.5]{FN}, and is a DG version of \cite[Proposition 2.7]{amp}.
\begin{lemma}\label{lem:DCtransfer}
Let $X \in \Df$, let $Y \in \Db$ with $\flatdim_RY < \infty$, and let $I$ be an ideal in $\HR$. Then we have the following isomorphism:
\[
\LLam[I]{(X \lotimes_R Y)} \simeq X \lotimes_R \LLam[I]{(Y)}.
\]
\end{lemma}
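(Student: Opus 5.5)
The plan is to write derived completion via the telescope as in Section~2 and reduce the claim to a tensor-evaluation isomorphism, which \Cref{lem:tensoreval} was set up to provide. Choose a finite sequence $\x$ in $R_0$ whose image generates $I$, and set $T=R\otimes_{R_0}\Tel(R_0;\x)$. As in the proof of \Cref{lem:4.2}, $\projdim_RT<\infty$ by \cite[Corollary 2.3(i)]{ShaulFinDim}, and $\LLam[I]{(N)}\simeq\RHom_R(T,N)$ for every $N\in\D(R)$. Thus the desired isomorphism reads
\[
\RHom_R(T,X\lotimes_RY)\simeq X\lotimes_R\RHom_R(T,Y),
\]
and the natural candidate is the tensor-evaluation morphism $\tau\colon\RHom_R(T,Y)\lotimes_RX\to\RHom_R(T,Y\lotimes_RX)$, composed with the symmetry of $\lotimes_R$.

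To apply \Cref{lem:tensoreval} with $M=T$, $X=Y$ and $N=X$, I must check its hypotheses. We have $Y\in\Db$ with finite flat dimension, and $X\in\Df$, which needs no boundedness in that lemma. The remaining point is $T\in\Db$. The telescope $\Tel(R_0;\x)$ is a bounded complex of free $R_0$-modules concentrated in degrees $-n,\dots,0$. Hence $T$ has a finite filtration by shifts of free DG $R$-modules $\bigoplus R$, and since $R$ has bounded homology, $\HH{*}{T}$ is bounded. Note that $T$ is not in $\Df$; this is why \Cref{lem:tensoreval} is stated with $M\in\Db$ of finite projective dimension rather than perfect.

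Once these hypotheses are verified, $\tau$ is an isomorphism. Composing with the telescope identifications gives
\[
X\lotimes_R\LLam[I]{(Y)}\simeq\RHom_R(T,Y)\lotimes_RX\xrightarrow{\ \tau\ }\RHom_R(T,Y\lotimes_RX)\simeq\LLam[I]{(X\lotimes_RY)}.
\]
For naturality I will observe that everything is natural in $X$.

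I expect the main obstacle to be the boundedness of $T$ together with the identification $\LLam[I]{(-)}\simeq\RHom_R(T,-)$ on unbounded arguments. The first needs care because $T$ involves infinite direct sums; boundedness of homology still holds, since homology commutes with direct sums and the filtration has finite length. For the second, $X\lotimes_RY$ may be unbounded in both directions. I will use the formula $\LLam[I]{(M)}\simeq\Hom_{R_0}(\Tel(R_0;\x),M)$, valid for all $M\in\D(R)$ by \cite[Proposition 2.4]{ShaulCompletion}, and then the adjunction $\Hom_{R_0}(\Tel,M)\cong\Hom_R(R\otimes_{R_0}\Tel,M)$, with $T$ semi-free over $R$ (or K-projective). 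This handles arbitrary $M$ without any boundedness assumption.
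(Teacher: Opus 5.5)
Your proposal is correct and follows the paper's proof. The paper writes $\LLam[I]{(-)}\simeq\RHom_R(\check{C}(\mathbf{j})\lotimes_{R_0}R,-)$, notes that this DG-module has finite projective dimension, and applies \Cref{lem:tensoreval} exactly as you do; your $T$ is a quasi-isomorphic telescope model of the same DG-module. Your explicit check that $T\in\Db$ is a useful addition, since \Cref{lem:tensoreval} requires $M\in\Db$ and the paper leaves this implicit (your aside that $T\notin\Df$ can fail in degenerate cases such as $I=0$, but nothing depends on it).
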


\begin{proof}
Let $J$ be a finitely generated ideal in $R_0$ generated by lifts of the generators of $I$, and denote a finite list of generators of $J$ as $\mathbf{j}$. We have an isomorphism $\LLam[I]{(X \lotimes_R Y)} \simeq \RHom_{R_0}(\check{C}(\mathbf{j}), X \lotimes_R Y)$ in $\D(R)$ by \cite[Proposition 2.4]{ShaulCompletion}. The DG $R$-module $\RHom_{R_0}(\check{C}(\mathbf{j}), X \lotimes_R Y)$ is isomorphic to $\RHom_R(\check{C}(\mathbf{j}) \lotimes_{R_0} R, X \lotimes_R Y)$ through derived Hom-tensor adjunction. Note that the DG $R$-module $\check{C}(\mathbf{j}) \lotimes_{R_0} R$ has finite projective dimension, as $\projdim_{R_0} \check{C}(\mathbf{j})<\infty$. (This follows from \cite[Corollary 2.3]{ShaulFinDim} and \cite[Proposition 8.1.4]{LarsBook}). Thus, we have the isomorphisms

\begin{align*}
\LLam[I]{(X \lotimes_R Y)}  &\simeq \RHom_R(\check{C}(\mathbf{j}) \lotimes_{R_0} R, X \lotimes_R Y) \\
                            &\simeq X \lotimes_R \RHom_R(\check{C}(\mathbf{j}) \lotimes_{R_0} R, Y) \\
                            &\simeq X \lotimes_R \LLam[I]{(Y)}
\end{align*}
where the second isomorphism follows from \Cref{lem:tensoreval} and the third isomorphism from \cite[Proposition 2.4]{ShaulCompletion}.
\end{proof}
The next result is a DG version of \cite[Theorem 13.2.5]{LarsBook}, and will also be used in the proofs of the results that follow.
\begin{corollary}
\label{cor:DCFG}
Let $M \in \Df$. Then we have the following isomorphism:
\[
\LLam{(M)} \simeq M \lotimes_R \widehat{R}.
\]
\end{corollary}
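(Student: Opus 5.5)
The plan is to obtain the corollary as a direct specialization of \Cref{lem:DCtransfer}, taking $X=M$, $Y=R$ and $I=\m$. The hypotheses of that lemma are satisfied in this setting. First, $R\in\Db$ because the standing assumption is that $R$ has bounded homology. Second, $\flatdim_RR=0<\infty$, since $\Tor_i^R(N,R)=\HH{i}{N}$ vanishes for $i>\sup N$ whenever $N\in\Db$. Third, $M\in\Df$ is exactly the first hypothesis of the lemma, and no boundedness of $M$ is required there.

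With these choices, \Cref{lem:DCtransfer} gives
\[
\LLam[\m]{(M\lotimes_RR)}\simeq M\lotimes_R\LLam[\m]{(R)}.
\]
On the left we use the unitor $M\lotimes_RR\simeq M$. On the right we use the definition $\wh R=\LLam[\m]{(R)}$ from Section~2, read as an object of $\D(R)$ via restriction of scalars. Together these give $\LLam{(M)}\simeq M\lotimes_R\wh R$ in $\D(R)$, which is the claim. If one also wants naturality in $M$, it follows because every isomorphism in the proof of \Cref{lem:DCtransfer} (adjunction and tensor evaluation) is natural.

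The only real content is the tensor evaluation isomorphism, \Cref{lem:tensoreval}, applied with $M$ taken to be $\check C(\mathbf j)\lotimes_{R_0}R$, $X$ taken to be $R$, and $N$ taken to be our $M$. That lemma requires $N\in\Df$ with no boundedness assumption on $N$. It also requires $\check C(\mathbf j)\lotimes_{R_0}R$ to be bounded with finite projective dimension, and this was already verified inside the proof of \Cref{lem:DCtransfer}.

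I therefore do not expect any step to be a real obstacle. The one point to check is that the identification $\LLam[\m]{(R)}=\wh R$ is compatible with the $R$-module structure used in $M\lotimes_R\wh R$. This holds because restriction of scalars along $R\to\wh R$ sends $\LLamhat[\m]{(R)}$ to $\LLam[\m]{(R)}$, as recalled in Section~2.
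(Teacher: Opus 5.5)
Your proposal is correct and matches the paper: the corollary is stated as an immediate consequence of \Cref{lem:DCtransfer}. You take $X=M$, $Y=R$ (bounded, with $\flatdim_RR=0$) and $I=\m$, then use $M\lotimes_RR\simeq M$ and identify $\LLam[\m]{(R)}$ with $\wh R$ in $\D(R)$. One small correction: the proof of \Cref{lem:DCtransfer} checks only that $\check C(\mathbf j)\lotimes_{R_0}R$ has finite projective dimension; its boundedness, which \Cref{lem:tensoreval} also needs, is used implicitly there but holds, since $\check C(\mathbf j)$ is a bounded complex of flat $R_0$-modules and this DG-module computes $\RGam[\m]{(R)}$.
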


\begin{remark}
\Cref{prp:ampCompletion} also follows from \Cref{cor:DCFG} along with \cite[Lemma 4.8]{ShaulFinDim} and the proof of \cite[Theorem 4.1]{ShaulFinDim}.
\end{remark}

The next lemma shows that the depth of a derived completion is the same whether computed over $R$ or $\wh{R}$. Together with \Cref{prp:depthLLam}, it will allow us to transfer the finite-depth hypothesis to the completed DG-ring in the proof of \Cref{cor:ampLLam}.

\begin{lemma}\label{lem:depthLLamhat}
Let $X\in\Db$, then
\[
\depth_R\LLam[\m]{(X)}=\depth_{\wh R}\LLamhat[\m]{(X)}.
\]
\end{lemma}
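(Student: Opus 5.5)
We want $\depth_R\LLam[\m]{(X)}=\depth_{\wh R}\LLamhat[\m]{(X)}$, where the left side is computed as $-\sup\RHom_R(\kk,\LLam[\m]{(X)})$ and the right side as $-\sup\RHom_{\wh R}(\kk,\LLamhat[\m]{(X)})$. We use that the residue field of $\HH{0}{\wh R}\cong\wh{\HR}$ is again $\kk$, since $\wh\m=\m\HH{0}{\wh R}$. The plan is to identify the two derived Hom complexes directly via adjunction along $R\to\wh R$. Boundedness is not an issue: since $X\in\Db$, the derived completion $\LLam[\m]{(X)}$ is bounded above (as in the proof of \Cref{thm:4.6dual}, via the bounded telescope). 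Restriction of scalars sends $\LLamhat[\m]{(X)}$ to $\LLam[\m]{(X)}$, so both depths are defined.

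\textbf{Key steps.} First, by the displayed formula $\LLamhat[\m]{(X)}\simeq\RHom_R(\wh R,\LLam[\m]{(X)})$ in $\D(\wh R)$ and Hom-tensor adjunction,
\[
\RHom_{\wh R}(\kk,\LLamhat[\m]{(X)})\simeq\RHom_{\wh R}\bigl(\kk,\RHom_R(\wh R,\LLam[\m]{(X)})\bigr)\simeq\RHom_R(\kk\lotimes_{\wh R}\wh R,\LLam[\m]{(X)})\simeq\RHom_R(\kk,\LLam[\m]{(X)}),
\]
where $\kk$ is viewed as a DG $\wh R$-module through $\wh R\to\HH{0}{\wh R}\to\kk$. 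Restricting it to $R$ gives $\kk$ viewed as an $R$-module through $R\to\HR\to\kk$, because the composite $R\to\wh R\to\kk$ agrees with the canonical one (the induced map on $\HH{0}{}$ is $\HR\to\wh{\HR}$). Taking $-\sup$ of both sides then yields the equality.

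\textbf{Main obstacle.} The main subtlety is making sure that this adjunction is legitimate in $\D(\wh R)$. Concretely, the restriction of the $\wh R$-module $\kk$ to $R$ must be isomorphic in $\D(R)$ to the $R$-module $\kk$, not merely to have the same homology. Since both are concentrated in degree $0$ and equal to $\kk$ with the same $\HR$-action, this should follow: a DG module over a nonnegative DG-ring with homology concentrated in degree zero is determined by its $\HH{0}{}$-module via $R\to\HR$. I would cite the standard truncation argument for this.

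If the adjunction route proves delicate, I would fall back on \Cref{prp:widthDual}-style base change instead. In that argument one compares $\RHom_R(\HR,\LLam[\m]{(X)})$ with $\RHom_{\wh R}(\HH{0}{\wh R},\LLamhat[\m]{(X)})$, using \cite[Proposition 3.2]{ShaulCM} to reduce both depths to depth over $\HR$ and over $\wh{\HR}$. One then concludes by invariance of depth under the faithfully flat completion $\HR\to\wh{\HR}$, which does not change the residue field.
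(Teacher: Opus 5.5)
Your proposal is correct and is essentially the paper's proof. The paper uses the same chain
$\RHom_{\wh R}(\kk,\LLamhat[\m]{(X)})\simeq\RHom_{\wh R}(\kk,\RHom_R(\wh R,\LLam[\m]{(X)}))\simeq\RHom_R(\kk\lotimes_{\wh R}\wh R,\LLam[\m]{(X)})\simeq\RHom_R(\kk,\LLam[\m]{(X)})$, obtained via \cite[Proposition 5.4]{ShaulCompletion}, Hom-tensor adjunction and tensor cancellation, and then takes $-\sup$. Your explicit identification of the restriction of the $\wh R$-module $\kk$ with the $R$-module $\kk$ spells out what the paper covers by noting that $R$ and $\wh R$ have the same residue field, and your fallback route is not needed.
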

\begin{proof}
By
\cite[Proposition 5.4]{ShaulCompletion}, we have an isomorphism
\[
\LLamhat[\m]{(X)}
\simeq
\RHom_R(\wh R,\LLam[\m]{(X)})
\]
in $\D(\wh R)$. Since $R$ and $\wh R$ have the same residue field,
we have the chain of equalities
\begin{align*}
\depth_{\wh R}\LLamhat[\m]{(X)}
&=-\sup\RHom_{\wh R}(\kk,\LLamhat[\m]{(X)})\\
&=-\sup\RHom_{\wh R}
  (\kk,\RHom_R(\wh R,\LLam[\m]{(X)}))\\
&=-\sup\RHom_R
  (\kk\lotimes_{\wh R}\wh R,\LLam[\m]{(X)})\\
&=-\sup\RHom_R(\kk,\LLam[\m]{(X)})\\
&=\depth_R\LLam[\m]{(X)}.
\end{align*}
The first and last equalities follow from the definition of depth,
the second from the isomorphism above, the third from derived
Hom-tensor adjunction, and the fourth from tensor cancellation.
\end{proof}

The next result is a DG version of \cite[Proposition 15.4.30(a)]{LarsBook}.

\begin{lemma}\label{lem:flatDim} 
Let $X\in\Db$, then
\begin{enumerate}
\item $\flatdim_R\LLam[\m]{(X)}\leq\flatdim_RX$.
\item $\flatdim_{\wh R}\LLamhat{(X)}\leq\flatdim_RX$.
\end{enumerate}
\end{lemma}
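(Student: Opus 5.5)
Both parts follow from the tensor-evaluation isomorphism of \Cref{lem:tensoreval}, using the telescope description of derived completion. We may assume $X\not\simeq0$ and $n=\flatdim_RX<\infty$; otherwise there is nothing to prove. Recall that $\flatdim_R$ only requires testing $\Tor_i^R(N,-)$ against $N\in\Db$.

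For (1), choose lifts $\x$ in $R_0$ of generators of $\m$ and set $T=R\otimes_{R_0}\Tel(R_0;\x)$. As in the proof of \Cref{lem:4.2}, $\LLam[\m]{(X)}\simeq\RHom_R(T,X)$ and $\projdim_RT<\infty$. By \Cref{lem:4.2}, $\LLam[\m]{(X)}\in\Dbb$. The argument in the proof of \Cref{thm:4.6dual} (telescope bounded, \cite[Lemma 3.2]{ShaulFinDim}) gives $\LLam[\m]{(X)}\in\Dba$. Now let $N\in\Db$. The flat dimension of a bounded object can be detected by the modules $\HR/I$, by \cite[Theorem 8.3.11]{LarsBook} together with \cite[Corollary 2.3(ii)]{ShaulFinDim}, exactly as in the proof of \Cref{lem:tensoreval}. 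So it suffices to bound $\sup(\LLam[\m]{(X)}\lotimes_R\HR/I)$ for all ideals $I$ of $\HR$. Since $\HR/I\in\Dfb$, \cite[Lemma 2.7(iv)]{ShaulFinDim} (or \Cref{lem:DCtransfer}) gives
\[
\LLam[\m]{(X)}\lotimes_R\HR/I\simeq\RHom_R(T,X\lotimes_R\HR/I)\simeq\LLam[\m]{(X\lotimes_R\HR/I)}.
\]
The key input is that derived completion does not raise $\sup$ on complexes whose supremum is controlled. Over $\HR$, local homology satisfies $\sup\LLam[\m]{(W)}\leq\sup W$ for bounded-above $W$, because the telescope is concentrated in degrees $0,\dots,-d$ with $\Hom$ from degree $-j$ landing in degree $\geq$. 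This naive bound is off by $d$, so it is not sharp enough by itself. Instead I would use that $X\lotimes_R\HR/I$ has homology modules over $\HR/I$. By \Cref{thm:LLamCobaseChange}-style reductions and \cite[(1.2)]{ShaulCM}, $\sup\LLam[\m]{(W)}=\sup\LL[\m]{\HR}{\RHom_R(\HR,W)}$. The classical result \cite[Proposition 15.4.30(a)]{LarsBook} is then applied over $\HR$ to $\RHom_R(\HR,\LLam[\m]{(X)})$.

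A cleaner alternative for the sharp bound is to pass to Matlis duality. Flat dimension of $\LLam[\m]{(X)}$ equals the injective dimension of $\RHom_R(\LLam[\m]{(X)},E)$. Using $\RHom_R(\LLam[\m]{(X)},E)\simeq\RGam[\m]{\RHom_R(X,E)}$, which follows from \cite[Proposition 2.11]{ShaulCompletion} and adjunction, it remains to show that $\RGam[\m]{}$ does not increase injective dimension. For this I would test against $N\in\Db$:
\[
\RHom_R(N,\RGam[\m]{W})\simeq\RHom_R(N,\ \check C\lotimes W).
\]
The torsion functor commutes with coproducts and is computed by a bounded flat complex, and injectivity of $W=\RHom_R(X,E)$ transfers to $\RGam[\m]{W}$ because derived torsion of a DG-module of injective dimension $\leq n$ still has injective dimension $\leq n$. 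This holds since $\RGam[\m]{}$ preserves injective DG-modules, as torsion submodules of injectives are injective over noetherian $\HR$, via \cite{injective}. This duality route is the step I expect to be the main obstacle: making rigorous that $\RGam[\m]{}$ preserves injective dimension over a DG-ring. I would first try to prove it by reducing to $\HR$ via $\RHom_R(\HR,-)$ and \cite[Corollary 2.3(iii)]{ShaulFinDim}, invoking \cite[Proposition 15.4.30]{LarsBook}'s proof there.

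For (2), I would use \cite[Proposition 5.4]{ShaulCompletion}: $\LLamhat{(X)}\simeq\RHom_R(\wh R,\LLam[\m]{(X)})$, whose restriction to $R$ is $\LLam[\m]{(X)}$. For $\wh N\in\Db[\wh R]$, I would write
\[
\wh N\lotimes_{\wh R}\LLamhat{(X)}
\]
and compare it with $\wh N\lotimes_R\LLam[\m]{(X)}$ after restriction. Flat base change $\flatdim_R\wh R=0$ lets one detect flat dimension over $\wh R$ using $\wh R\lotimes_R-$ applied to test modules $\HR/I$, since $\HH{0}{\wh R}/I\wh{}=\HR/I\otimes\wh{\HR}$. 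Then \Cref{cor:DCFG} identifies $\wh R\lotimes_R\HR/I$ with its completion, and $\LLamhat{(X)}\lotimes_{\wh R}(\wh R\lotimes_R\HR/I)\simeq\LLam[\m]{(X)}\lotimes_R\HR/I$ reduces everything to the bound from (1).
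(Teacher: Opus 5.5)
\textbf{Both parts have a genuine gap.}

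\textbf{Part (1).} Your reduction to bounding $\sup(\LLam[\m]{(X)}\lotimes_R\HR/I)=\sup\LLam[\m]{(X\lotimes_R\HR/I)}$ is sound, but that is exactly where the content lies, and neither continuation closes it.

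Derived completion can genuinely raise $\sup$: over a Gorenstein local ring of dimension $d$ one has $\LLam[\m]{(E)}\simeq\Sigma^d\wh R$. So your ``key input'' is false, and $\sup(X\lotimes_R\HR/I)\leq\flatdim_RX$ is not enough, as you partly notice.

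Applying \cite[Proposition 15.4.30(a)]{LarsBook} to $\RHom_R(\HR,\LLam[\m]{(X)})$ uses the wrong base change. Coinduction $\RHom_R(\HR,-)$ controls injective dimension \cite[Corollary 2.3(iii)]{ShaulFinDim}. Nothing you cite relates the flat dimension of a coinduced object over $\HR$ to $\flatdim_R\LLam[\m]{(X)}$.

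The Matlis duality route rests on a false isomorphism. Adjunction gives $\RHom_R(\RGam[\m]{(M)},E)\simeq\LLam[\m]{(\RHom_R(M,E))}$, not $\RHom_R(\LLam[\m]{(X)},E)\simeq\RGam[\m]{(\RHom_R(X,E))}$. Already for $X=R$ a non-complete discrete valuation ring with fraction field $Q$, $\RHom_R(\LLam[\m]{(R)},E)=\Hom_R(\wh R,E)$ contains the nonzero $Q$-vector space $\Hom_R(\wh R/R,E)$. So it is not $\m$-torsion, whereas $\RGam[\m]{(E)}=E$ is.

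What works, using ingredients you already invoke, is induction rather than coinduction. By \cite[Corollary 2.3(ii)]{ShaulFinDim}, \Cref{lem:4.2}, and \cite[Proposition 15.4.30(a)]{LarsBook},
\[
\flatdim_R\LLam[\m]{(X)}=\flatdim_{\HR}\LL[\m]{\HR}{(\HR\lotimes_RX)}\leq\flatdim_{\HR}(\HR\lotimes_RX)=\flatdim_RX.
\]
This is the content of \cite[Theorem 4.3]{ShaulHomDim}, which the paper simply cites.

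\textbf{Part (2).} Testing only against $\wh R\lotimes_R\HR/I$ does not detect flat dimension over $\wh R$. Via \cite[Corollary 2.3(ii)]{ShaulFinDim} and \cite[Theorem 8.3.11]{LarsBook} over $\HH{0}{\wh R}\cong\wh{\HR}$, you need $\wh{\HR}/J$ for all ideals $J$ of $\wh{\HR}$, and most are not extended. Since $\wh{\HR}$ is flat over $\HR$, the extended ones only measure flat dimension over $\HR$, which can be strictly smaller. For instance, over $\HR=k[x,y]_{(x,y)}$, the residue field of a nonzero prime of $\wh{\HR}$ contracting to $0$ is flat over $\HR$ but has positive flat dimension over $\wh{\HR}$.

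The missing ingredient is that for derived complete complexes the two flat dimensions agree \cite[Proposition 1.2(f)]{amp}. The paper uses \Cref{lem:tensoreval} (with $\projdim_R\wh R<\infty$, $\LLam[\m]{(X)}\in\Db$, and part (1)) to get
\[
\wh{\HR}\lotimes_{\wh R}\LLamhat{(X)}\simeq\LLamhat[\m\HR]{(\HR\lotimes_RX)}.
\]
It then applies \cite[Proposition 1.2(f)]{amp} to pass to $\flatdim_{\HR}\LL[\m]{\HR}{(\HR\lotimes_RX)}$, and concludes with the displayed chain above.
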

\begin{proof}
We may assume that $\flatdim_RX<\infty$, as otherwise the
inequalities are immediate. We have that
$\HR\lotimes_RX\in\Db[\HR]$ and
\[
\flatdim_{\HR}(\HR\lotimes_RX)=\flatdim_RX
\]
by \cite[Corollary 2.3]{ShaulFinDim}. Moreover, $\LLam[\m]{(X)}\in\Db$; indeed,
\[
\inf\LLam[\m]{(X)}=\width_RX\geq\inf X>-\infty
\]
by \Cref{thm:width,thm:widthInf}, while
\[
\sup\LLam[\m]{(X)}
\leq\ld R-\depth_RX
\leq\ld R+\sup X<\infty
\]
by \Cref{thm:bounds} and the inequality
$\depth_RX\geq-\sup X$.

$(1)$. This is just a special case of \cite[Theorem 4.3]{ShaulHomDim}.

$(2)$. We have that
$\H_0(\wh R)\simeq\wh{\HR}$ by
\cite[Proposition 1.7]{ShaulCM}. Since the completion
map $R\to\wh R$ has flat dimension zero by
\cite[Corollary 4.6]{ShaulHomDim}, we have
\[
\HR\lotimes_R\wh R\simeq\wh{\HR}.
\]
Moreover, $\projdim_R\wh R<\infty$ by
\cite[Theorem 4.1]{ShaulFinDim}. Since
$\LLam[\m]{(X)}\in\Db$ and
$\flatdim_R\LLam[\m]{(X)}<\infty$ by part $(1)$,
we can apply \Cref{lem:tensoreval} to obtain the
following isomorphisms in $\D(\wh{\HR})$:
\begin{align*}
\wh{\HR}\lotimes_{\wh R}\LLamhat[\m]{(X)}
&\simeq
\HR\lotimes_R\RHom_R(\wh R,\LLam[\m]{(X)})\\
&\simeq
\RHom_R(\wh R,\HR\lotimes_R\LLam[\m]{(X)})\\
&\simeq
\RHom_{\HR}
(\HR\lotimes_R\wh R,\HR\lotimes_R\LLam[\m]{(X)})\\
&\simeq
\RHom_{\HR}
(\wh{\HR},\LLam[\m\HR]{(\HR\lotimes_RX)})\\
&\simeq
\LLamhat[\m\HR]{(\HR\lotimes_RX)}.
\end{align*}
The first isomorphism follows from tensor cancellation
and \cite[Proposition 5.4]{ShaulCompletion}, the second
from \Cref{lem:tensoreval}, and the third from derived
Hom-tensor adjunction. The fourth follows from
\cite[Lemma 4.2]{ShaulHomDim} and the isomorphism
$\HR\lotimes_R\wh R\simeq\wh{\HR}$, while the last
follows from \cite[Proposition 5.4]{ShaulCompletion}
applied to $\HR$.

Thus, we have the (in)equalities
\begin{align*}
\flatdim_{\wh R}\LLamhat[\m]{(X)}
&=\flatdim_{\wh{\HR}}
  (\wh{\HR}\lotimes_{\wh R}\LLamhat[\m]{(X)})\\
&=\flatdim_{\wh{\HR}}
  \LLamhat[\m\HR]{(\HR\lotimes_RX)}\\
&=\flatdim_{\HR}
  \LLam[\m\HR]{(\HR\lotimes_RX)}\\
&\leq\flatdim_RX.
\end{align*}
The first equality follows from
\cite[Corollary 2.3]{ShaulFinDim}, the second from
the isomorphisms above, and the third from
\cite[Proposition 1.2(f)]{amp}. The inequality follows from part (1), \Cref{lem:4.2}, and \cite[Corollary 2.3(ii)]{ShaulFinDim}.
\end{proof}

The following is one of the main results of this section and provides a DG version of the right inequality in \cite[Theorem 2.8]{amp}.
\begin{theorem}\label{cor:ampLLam}
Let $Y\in\Db$ with $\depth_RY<\infty$ and $\flatdim_RY<\infty$, and let $X\in\Df$. We have the following inequality
\[
\amp\LLam[\m]{(X\lotimes_RY})\geq\amp X.
\]
\end{theorem}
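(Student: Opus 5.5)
Throughout, the plan is to reduce to \Cref{thm:4.7dual} by Matlis duality over the completed DG-ring $\wh R$. We then transport the amplitude back along restriction of scalars, which does not change homology. The key steps, in order, are as follows.

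\emph{Reductions.} First, by \Cref{lem:DCtransfer} we have $\LLam[\m]{(X\lotimes_RY)}\simeq X\lotimes_R\LLam[\m]{(Y)}$. Second, set $Y'=\LLam[\m]{(Y)}$. By the proof of \Cref{lem:flatDim}, $Y'\in\Db$, and by \Cref{lem:flatDim}(1) it has finite flat dimension. By \Cref{prp:depthLLam} it has finite depth. Third, $X\lotimes_RY'$ is derived $\m$-complete, so we may pass to $\wh R$. We use \Cref{cor:DCFG} to write
\[
X\lotimes_R Y'\simeq (X\lotimes_R\wh R)\lotimes_{\wh R}\LLamhat[\m]{(Y)}.
\]
This uses that restriction of $\LLamhat[\m]{(Y)}$ is $Y'$ and that $\wh R$ carries the $\wh R$-structure. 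Here $\wh X=X\lotimes_R\wh R\in\Df[\wh R]$. Fourth, because $\flatdim_R\wh R=0$ and $\HR\to\wh{\HR}$ is faithfully flat, $\amp\wh X=\amp X$. By \Cref{lem:depthLLamhat} and \Cref{lem:flatDim}(2), $\wh Y=\LLamhat[\m]{(Y)}$ is bounded with finite depth and finite flat dimension over $\wh R$. So it suffices to prove the statement over $\wh R$ for the derived complete module $\wh Y$, where $\wh X\lotimes\wh Y$ is already complete. We therefore rename and assume that $R$ is complete and $Y$ is derived $\m$-complete.

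\emph{Duality step.} Apply $(-)^\vee=\RHom_R(-,E)$, which preserves amplitude. Swap gives
\[
(X\lotimes_RY)^\vee\simeq\RHom_R(X,Y^\vee).
\]
By \Cref{prp:widthDual}(2), $\width_R Y^\vee=\depth_RY<\infty$. Also $\injdim_RY^\vee=\flatdim_RY<\infty$, and $Y^\vee\in\Db$. If $X$ is not in $\Dba$, the left side is unbounded or the inequality is trivial. Similarly, if $X\lotimes Y$ has infinite amplitude there is nothing to prove. Since $\RHom(X,Y^\vee)$ is Matlis dual to a complete object of bounded homology, we expect it to be derived complete. So \Cref{thm:4.7dual} would give
\[
\amp(X\lotimes_RY)=\amp\LLam[\m]{\RHom_R(X,Y^\vee)}\geq\amp\RGam[\m]{(X)}.
\]
The remaining task is to compare $\amp\RGam[\m]{(X)}$ with $\amp X$. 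This comparison goes the wrong way in general: $\amp\RGam[\m]{(X)}=\ld X-\depth X\geq\amp X$ only for $X\in\Dfb$.

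\emph{Alternative for the last step, which is likely the real route.} Rather than \Cref{thm:4.7dual}, we could dualize with the normalized dualizing DG-module $D$ of the complete ring $R$, using \Cref{prp:widthGDual}. Since $R$ is complete, $X\in\Df$ is derived complete. We write $X\simeq\RHom_R(X^\dagger,D)$ with $X^\dagger=\RHom_R(X,D)$. Then $X\lotimes Y\simeq\RHom_R(X^\dagger,D\lotimes_RY)$ by tensor evaluation. Here $D\lotimes_RY$ has finite injective dimension, is bounded, and has width equal to $\depth_RY$ up to normalization by \Cref{prp:widthGDual}. The inequality then reads $\amp\geq\amp\RGam[\m]{(X^\dagger)}$. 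By local duality, $\RGam[\m]{(X^\dagger)}$ is the Matlis dual of $\LLam[\m]{X}\simeq X$, so its amplitude is $\amp X$, as required.

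\emph{Expected obstacle.} The main obstacle is boundedness. \Cref{thm:4.7dual} requires $X^\dagger\in\Dfba$, which means $X\in\Dfbb$, and the tensor-evaluation step needs $X\lotimes Y$ controlled. We would handle this first by disposing of the case where $\LLam[\m]{(X\lotimes Y)}$ is unbounded (trivial), and then showing, via Nakayama with $K\lotimes_R X$ as in the proof of \Cref{thm:4.7dual}, that boundedness of the left side forces $X\in\Dfb$.
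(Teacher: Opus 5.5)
Your final route is essentially the paper's proof. It consists of reducing to $X\in\Dfbb$ via Nakayama (that is all this argument yields, not $X\in\Dfb$, but it is all you need), passing to $\wh R$, and rewriting $\wh X\lotimes_{\wh R}\LLamhat[\m]{(Y)}$ as $\RHom_{\wh R}(\RHom_{\wh R}(\wh X,D),D\lotimes_{\wh R}\LLamhat[\m]{(Y)})$ with a normalized dualizing DG-module $D$; you then apply \Cref{thm:4.7dual} and identify $\amp\RGam[\wh\m]{(\RHom_{\wh R}(\wh X,D))}$ with $\amp\wh X=\amp X$ by local duality. Drop the Matlis-duality detour, since neither the derived completeness of $\RHom_R(X,Y^\vee)$ nor $\amp\RGam[\m]{(X)}\geq\amp X$ for $X\in\Dfb$ holds in general; also, finiteness of $\width_R(D\lotimes_RY)$ comes from the derived depth formula \cite[Lemma 2.5]{yang} together with \Cref{prp:finiteDepth}, because it is the depth of $D\lotimes_RY$, not its width, that equals $\depth_RY$ up to a shift.
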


\begin{proof}
We may assume $X\not\simeq0$. Let $D$ be a dualizing DG-module for $\wh{R}$ (which exists by \cite[Proposition 7.21]{injective}), and shift $D$ so that it is normalized. Finally, set $\wh{X} = X \lotimes_R \wh{R}$. 

We first show that $\LLam[\m]{(X\lotimes_RY)}\not\simeq0$. We observe that for every $M\in\D(R)$ one has
\begin{align*}
\kk\lotimes_R\LLam[\m]{(M)}&\simeq\RGam[\m]{(\kk)}\lotimes_R\LLam[\m]{(M)}\\
&\simeq\kk\lotimes_R\RGam[\m]{(\LLam[\m]{(M)})}\\
&\simeq\kk\lotimes_R\RGam[\m]{(M)}\\
&\simeq\RGam[\m]{(\kk)}\lotimes_RM\\
&\simeq\kk\lotimes_RM.
\end{align*}
We note that $\kk\lotimes_RY\not\simeq0$ since $\width_RY<\infty$ (by \Cref{prp:finiteDepth}). Moreover $\kk\lotimes_RY\in\Db$ since $Y\in\Db$ and $\flatdim_RY<\infty$. We note that as in the proof of \Cref{thm:4.7dual} one can argue that $\kk\lotimes_RX\not\simeq0$. Therefore, it follows from 
\begin{equation}\label{eq:Z2}
\kk\lotimes_R\LLam[\m]{(X\lotimes_RY)}\simeq\kk\lotimes_RX\lotimes_RY\simeq(\kk\lotimes_R X)\lotimes_\kk(\kk\lotimes_RY)
\end{equation}
that $\LLam[\m]{(X\lotimes_RY)}\not\simeq0$.

Thus we can assume $\LLam[\m]{(X\lotimes_RY)}\in\Db$ since otherwise the desired inequality is trivial. We show that under these hypotheses one has $X\in\Dbb$, this will imply that $\RHom_{\wh R}(\wh{X},D)\in\Dfba[\wh R]$, which will allow us to use \Cref{thm:4.7dual} in the computation below. Since $\LLam[\m]{(X\lotimes_RY)}\in\Db$, it follows that $\kk\lotimes_R\LLam[\m]{(X\lotimes_RY)}\in\Dbb$, moreover from \eqref{eq:Z2} we deduce
\[
\inf(\kk\lotimes_R\LLam[\m]{(X\lotimes_RY)})=\inf(\kk\lotimes_RX)+\inf(\kk\lotimes_RY),
\]
and therefore $\kk\lotimes_RX\in\Dbb$. Now one can deduce $X\in\Dbb$ as in the proof of \Cref{thm:4.7dual}. Since $R\to\wh R$ is faithfully flat, we have
$\wh X\in\Dfbb[\wh R]$ and $\wh X\not\simeq0$.
Thus \cite[Proposition 7.2]{tilting} gives
$\RHom_{\wh R}(\wh X,D)\in\Dfba[\wh R]$, and biduality shows
that it is nonzero.

We now observe the following chain of isomorphisms:
\begin{align*}
\LLamhat[\m]{(X\lotimes_RY)}&\simeq\LLam[\wh{\m}]{(\LLamhat[\m]{(X\lotimes_RY)})}\\
&\simeq\LLam[\wh{\m}]{(\RHom_R(\wh{R}, \LLam{(X \lotimes_R Y)}))}\\
&\simeq\LLam[\wh{\m}]{(\RHom_R(\wh{R}, X\lotimes_R\LLam[\m]{(Y)}))}\\
&\simeq\LLam[\wh{\m}]{(\RHom_R(\wh{R}, \LLam[\m]{(Y)})\lotimes_R X)}\\
&\simeq\LLam[\wh{\m}]{(\RHom_R(\wh{R}, \LLam[\m]{(Y)}) \lotimes_{\wh{R}} \wh{X})}\\
&\simeq\LLam[\wh{\m}]{(\LLamhat[\m]{(Y)} \lotimes_{\wh{R}} \wh{X})}\\
&\simeq\LLam[\wh{\m}]{(\LLamhat[\m]{(Y)} \lotimes_{\wh{R}} (\RHom_{\wh{R}}(\RHom_{\wh{R}}(\wh{X},D),D)))}\\
&\simeq\LLam[\wh{\m}]{(\RHom_{\wh{R}}(\RHom_{\wh{R}}(\wh{X},D),D\lotimes_{\wh{R}}\LLamhat[\m]{(Y)}))}.
\end{align*}
The first isomorphism is \cite[Lemma A.4]{injective}, and the second isomorphism is \cite[Proposition 5.4]{ShaulCompletion}. The third isomorphism follows from Lemma~\ref{lem:DCtransfer}, and the fourth isomorphism is derived tensor evaluation (Lemma~\ref{lem:tensoreval}), noting that $\wh{R}$ has finite projective dimension by \cite[Corollary 4.6]{ShaulHomDim} and \cite[Theorem 4.1]{ShaulFinDim}, and $\LLam Y$ has finite flat dimension by \Cref{lem:flatDim}. The fifth isomorphism is derived tensor cancellation, and the sixth isomorphism is \cite[Proposition 5.4]{ShaulCompletion}. The seventh isomorphism holds by Grothendieck duality \cite[Proposition 7.2]{tilting}.

The DG-module $D \lotimes_{\wh{R}} \LLamhat[\m]{(Y)}$ has finite injective dimension over $\wh{R}$ by \cite[Proposition 3.6]{ShaulFinDim}, and is bounded as both $D$ and $\LLamhat[\m]{(Y)}$ are bounded and $\LLamhat[\m]{(Y)}$ has finite flat dimension. Boundedness of $\LLamhat[\m]{(Y)}$ follows from
boundedness of $\LLam[\m]{(Y)}$ and
\cite[Proposition 5.5]{ShaulCompletion}.

To justify the eighth isomorphism, first note that $D\in\Dfb[\wh R]$
and $\LLamhat[\m]{(Y)}$ have finite flat dimension over $\wh R$
by \Cref{lem:flatDim}. Thus tensor evaluation
\cite[Lemma 2.7(iii)]{ShaulFinDim}, applied with arguments
$D$, $D$, and $\LLamhat[\m]{(Y)}$, gives
\[
\LLamhat[\m]{(Y)}
\simeq \RHom_{\wh R}(D,D)\lotimes_{\wh R}\LLamhat[\m]{(Y)}
\simeq \RHom_{\wh R}(D,D\lotimes_{\wh R}\LLamhat[\m]{(Y)}).
\]
Since $\wh X\in\Dfbb[\wh R]$, $D$ is bounded, and
$D\lotimes_{\wh R}\LLamhat[\m]{(Y)}$ has finite injective dimension, Hom-evaluation (part~(i) of \cite[Lemma 2.7]{ShaulFinDim}) applies with arguments $\wh X$, $D$, and
$D\lotimes_{\wh R}\LLamhat[\m]{(Y)}$. Consequently,
\begin{align*}
\wh X\lotimes_{\wh R}\LLamhat[\m]{(Y)}
&\simeq\wh X\lotimes_{\wh R}
\RHom_{\wh R}(D,D\lotimes_{\wh R}\LLamhat[\m]{(Y)})\\
&\simeq\RHom_{\wh R}(\RHom_{\wh R}(\wh X,D),
D\lotimes_{\wh R}\LLamhat[\m]{(Y)}).
\end{align*}
Together with biduality for $\wh X$ and commutativity of the derived
tensor product, this gives the eighth isomorphism after applying
$\LLam[\wh\m]{(-)}$ over $\wh R$.

In addition, the width of $D \lotimes_{\wh{R}} \LLamhat[\m]{Y}$ is finite by \cite[Lemma 2.5]{yang}, \Cref{prp:finiteDepth}, \Cref{prp:depthLLam} and \Cref{lem:depthLLamhat}. Thus we can apply \Cref{thm:4.7dual} over $\wh R$ to obtain the following inequality
\begin{align*}
    \amp\LLamhat[\m]{(X\lotimes_RY)}
    &= \amp\LLam[\wh{\m}]{(\RHom_{\wh{R}}(\RHom_{\wh{R}}(\wh{X},D),D\lotimes_{\wh{R}}\LLamhat[\m]{(Y)}))} \\
    &\geq \amp\RGam[\wh\m]{(\RHom_{\wh R}(\wh{X}, D))} \\
    &= \amp\RHom_{\wh R}(\wh{X}, E_{\wh R}) \\
    &= \amp\wh{X}
\end{align*}
where the second equality follows from \cite[Propositions 7.24 and 7.25]{injective} over $\wh R$ and the third equality follows from \cite[Theorem 4.10]{injective}.
Applying the forgetful functor to the DG-module $\LLamhat[\m]{(X \lotimes_R Y)}$ as described in \cite[Proposition 5.5]{ShaulCompletion} yields that $\amp\LLamhat[\m]{(X\lotimes_RY)} = \amp\LLam[\m]{(X \lotimes_R Y)}$. We also have that $\amp\wh{X} = \amp X$ by \cite[Lemma 4.8]{ShaulFinDim}, which yields the necessary inequality
\[
\amp\LLam[\m]{(X \lotimes_R Y)} \geq \amp X.\qedhere
\]
\end{proof}

As a consequence of the previous theorem we prove a DG version of Iversen's inequality proved in \cite[Corollary 2.9]{amp}. In the DG setting it was already proved for DG-modules with finitely generated homology by Yang in \cite[Proposition 4.1]{yang}. We reprove it and show that it also holds for derived $\m$-complete DG-modules.

\begin{corollary}\label{cor:iversen}
Let $Y\in\Db$ with $Y\not\simeq0$ and $\flatdim_RY<\infty$. Let $X\in\Df$. If $Y\in\Dfb$ or $Y$ is derived $\m$-complete, then
\[
\amp X\lotimes_RY\geq\amp X.
\]
\end{corollary}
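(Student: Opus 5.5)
The plan is to deduce the statement from \Cref{cor:ampLLam}. Two things must be supplied: the finite-depth hypothesis needed there, and the identification of $\LLam[\m]{(X\lotimes_RY)}$ with $X\lotimes_RY$, at least at the level of amplitude.

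\emph{Step 1: finiteness of depth.} In either case we first check that $\width_RY<\infty$; \Cref{prp:finiteDepth} then gives $\depth_RY<\infty$.
\begin{itemize}
\item If $Y\in\Dfb$ is nonzero, then $\kk\lotimes_RY\not\simeq0$ by the Nakayama consequence recorded in Section~\ref{sec:background}. Since $\flatdim_RY<\infty$, this object is bounded, so $\width_RY<\infty$.
\item If $Y$ is derived $\m$-complete, then \Cref{thm:widthInf} gives $\width_RY=\inf Y$, which is finite because $0\not\simeq Y\in\Db$.
\end{itemize}
Thus \Cref{cor:ampLLam} applies, giving
\[
\amp\LLam[\m]{(X\lotimes_RY)}\geq\amp X.
\]

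\emph{Step 2: removing the completion.} By \Cref{lem:DCtransfer} with $I=\m$,
\[
\LLam[\m]{(X\lotimes_RY)}\simeq X\lotimes_R\LLam[\m]{(Y)}.
\]
\begin{itemize}
\item If $Y$ is derived $\m$-complete, then $\LLam[\m]{(Y)}\simeq Y$, so the left side is just $X\lotimes_RY$ and we are done.
\item If $Y\in\Dfb$, then \Cref{cor:DCFG} gives $\LLam[\m]{(Y)}\simeq Y\lotimes_R\wh R$. Hence
\[
\LLam[\m]{(X\lotimes_RY)}\simeq (X\lotimes_RY)\lotimes_R\wh R.
\]
\end{itemize}

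\emph{Step 3: the main obstacle.} In the finitely generated case we still need $\amp((X\lotimes_RY)\lotimes_R\wh R)=\amp(X\lotimes_RY)$.

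The first thing to try is to note that $X\lotimes_RY\in\Df$. This holds because $X\in\Df$ and $Y\in\Dfb$ has finite flat dimension, so the homology is degreewise finitely generated; one can argue by reducing $Y$ through $\HR$ or via a semifree resolution argument.

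Once that is known, \cite[Lemma 4.8]{ShaulFinDim}, already used at the end of the proof of \Cref{cor:ampLLam} to get $\amp\wh X=\amp X$, yields $\amp(M\lotimes_R\wh R)=\amp M$ for $M\in\Df$. Applying this with $M=X\lotimes_RY$ finishes the proof.

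If the degreewise finiteness of $X\lotimes_RY$ proves awkward, there is a fallback. Use associativity to write
\[
(X\lotimes_RY)\lotimes_R\wh R\simeq \wh X\lotimes_{\wh R}(Y\lotimes_R\wh R).
\]
Then use faithful flatness of $\HR\to\wh{\HR}$ on homology, since $\flatdim_R\wh R=0$, to conclude that $-\lotimes_R\wh R$ preserves $\inf$ and $\sup$ for all DG-modules, not only those in $\Df$.
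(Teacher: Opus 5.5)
Your Step~1 and your handling of the derived $\m$-complete case match the paper's proof. In the finitely generated case the paper gets $\width_RY=\inf Y$ from \Cref{thm:widthInf}, and your Nakayama argument is an equally good substitute.

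The finitely generated case of Step~2 is where you diverge. The paper first reduces to $X\lotimes_RY\in\Db$. It then observes that $Y$ is perfect: $\projdim_RY<\infty$ by \cite[Theorem~4.1]{ShaulFinDim}, so $Y$ is compact and lies in $\thick_R R$. Hence $X\lotimes_RY\in\thick_R X\subseteq\Df$, and \Cref{prp:ampCompletion} gives $\amp(X\lotimes_RY)=\amp\LLam[\m]{(X\lotimes_RY)}$. You instead push the completion onto $Y$ with \Cref{lem:DCtransfer} and \Cref{cor:DCFG}. This gives $\LLam[\m]{(X\lotimes_RY)}\simeq(X\lotimes_RY)\lotimes_R\wh R$ with no finiteness information about $X\lotimes_RY$, in the spirit of the remark after \Cref{cor:DCFG}.

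Your first way of finishing Step~3 is not justified as written. The claim $X\lotimes_RY\in\Df$ is true. However, your sketch (base change to $\HR$, or a semifree resolution of $Y$) does not by itself handle unbounded $X$. The claim also genuinely depends on $\flatdim_RY<\infty$. For example, take $R=\kk[\epsilon]/(\epsilon^2)$ with $|\epsilon|=1$, $Y=\kk$, and $X=\bigoplus_{j\geq0}\Sigma^{-2j}\kk\in\Df$. Then $\HH{0}{X\lotimes_R\kk}\cong\bigoplus_{j\geq0}\Tor^R_{2j}(\kk,\kk)$ is infinite-dimensional over $\kk$. The missing idea on that route is exactly the paper's perfection argument.

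Your fallback, however, is sound and makes $X\lotimes_RY\in\Df$ unnecessary, provided you state the fact it rests on. Since $\wh R$ is nonnegative with $\flatdim_R\wh R=0$ and $\HH{0}{\wh R}\cong\wh{\HR}$, one has $\HH{i}{M\lotimes_R\wh R}\cong\HH{i}{M}\otimes_{\HR}\wh{\HR}$. For bounded $M$ this follows by induction on amplitude using truncation triangles, and bounded $M$ suffices after reducing to $X\lotimes_RY\in\Db$. Faithful flatness of $\wh{\HR}$ over $\HR$ then preserves $\inf$ and $\sup$. The associativity rewrite through $\wh X$ is not needed.

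The trade-off is this. Your route avoids the flat-implies-finite-projective-dimension theorem and the compactness argument, but needs this homology formula for $-\lotimes_R\wh R$. The paper's route needs only \Cref{prp:ampCompletion}, which it proves by base change to $\HR$.
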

\begin{proof}
We can assume that $X\lotimes_RY\in\Db$, otherwise the inequality is trivial. In either case, \Cref{thm:widthInf} gives
$\width_RY=\inf Y<\infty$, since $Y\not\simeq0$ is bounded.
Thus $\depth_RY<\infty$ by \Cref{prp:finiteDepth}.

If $Y$ is derived $\m$-complete, then so is $X\lotimes_RY$ by \Cref{lem:DCtransfer}. Now one concludes by invoking \Cref{cor:ampLLam}.

If $Y\in\Df$, then $Y$ is perfect.
Indeed, \cite[Theorem 4.1]{ShaulFinDim} gives
$\projdim_RY<\infty$, and $Y\in\Dfb$ is therefore compact
by the proof of \cite[Lemma 2.7]{ShaulFinDim}.
Hence $Y\in\thick_R R$; see
\cite[Theorem 5.20 and Corollary 5.21]{tilting}.
It follows that
\[
X\lotimes_RY\in\thick_RX\subseteq\Df,
\]
where the inclusion holds because $\Df$ is a thick subcategory
of $\D(R)$. Thus $X\lotimes_RY\in\Dfb$, so by \Cref{prp:ampCompletion} 
\[
\amp X\lotimes_RY=\amp\LLam[\m]{(X\lotimes_RY)}\geq\amp X,
\]
where the inequality follows from \Cref{cor:ampLLam}.
\end{proof}

We recall the definition of regular sequences from \cite[Definition 5.2]{ShaulCM}. Let $0\not\simeq M\in\Dba$. An element $x\in\m$ is called $M$-regular if it is $\H_{\sup M}(M)$-regular. Inductively, a sequence $x_1,\ldots,x_n\in\m$ is called $M$-regular if $x_1$ is $M$-regular and the sequence $x_2,\ldots,x_n$ is $K(x_1)\lotimes_RM$-regular.

\begin{remark}\label{rmk:regular}
It is noted in \cite[(5.6)]{ShaulCM} that by Nakayama's lemma if $0\not\simeq M\in\Dfb$, then a sequence $\x\subseteq\fm$ is $M$-regular if and only if $\amp M=\amp K(\x)\lotimes_RM$. Since Nakayama's lemma also holds for derived $\m$-complete DG-modules, see \cite[Lemma 3.3]{Zach}, it follows that if $M\not\simeq0$ is a derived $\m$-complete bounded DG $R$-module, then $\x$ is $M$-regular if and only if $\amp M=\amp K(\x)\lotimes_RM$.
\end{remark}

Inspired by \cite{auslander}, we make the following
\begin{definition}\label{def:auslander}
Let $0\not\simeq Y\in\Dba$. We say that $Y$ is an \emph{Auslander DG R-module} if every $Y$-regular sequence is also $R$-regular.
\end{definition}

As an application of Iversen's inequality proved above, we generalize \cite[Example 2.10]{amp} to the DG setting. For DG-modules with finitely generated homology it was already proved in \cite[Remark 4.2(2)]{yang}.

\begin{corollary}\label{cor:auslander}
Let $Y\in\Db$ with $\flatdim_RY<\infty$ and $\amp Y=\amp R$. If $Y\in\Dfb$ or if $Y$ is derived $\m$-complete, then $Y$ is an Auslander DG $R$-module.
\end{corollary}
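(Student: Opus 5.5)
We argue by induction on the length of a $Y$-regular sequence, using Iversen's inequality (\Cref{cor:iversen}) with $X=K(\x)$ together with the amplitude characterization of regular sequences in \Cref{rmk:regular}. Let $\x=x_1,\ldots,x_n$ be a $Y$-regular sequence; note $Y\not\simeq0$ since $Y\in\Dba$ is assumed nonzero in \Cref{def:auslander}. The goal is to show that $\x$ is $R$-regular.

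First, by \Cref{rmk:regular}, applied either because $Y\in\Dfb$ or because $Y$ is bounded and derived $\m$-complete, the $Y$-regularity of $\x$ gives
\[
\amp(K(\x)\lotimes_RY)=\amp Y=\amp R.
\]
Next, apply \Cref{cor:iversen} with $X=K(\x)\lotimes_RR=K(\x)$, which lies in $\Df$ (indeed in $\Dfb$) by \cite[Proposition 2.10]{ShaulKoszul}. Since $Y$ is bounded, nonzero, of finite flat dimension, and either in $\Dfb$ or derived $\m$-complete, this yields
\[
\amp(K(\x)\lotimes_RY)\geq\amp K(\x).
\]
Hence $\amp K(\x)\leq\amp R$.

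For the reverse inequality and to conclude, we use that $R\in\Dfb$ is nonzero. By \Cref{rmk:regular}, $\x$ is $R$-regular if and only if $\amp K(\x)=\amp R$. So it suffices to show $\amp K(\x)\geq\amp R$. This should be the general fact that Koszul complexes do not decrease amplitude, $\amp(K(\x)\lotimes_RM)\geq\amp M$ for $0\not\simeq M\in\Dfb$. One can get it from the triangle $M\xrightarrow{x}M\to K(x)\lotimes_RM$. Nakayama's lemma shows that $\HH{\sup M}{M}/x\HH{\sup M}{M}\neq0$ appears in degree $\sup M$ of $K(x)\lotimes_RM$. On the other hand, in the long exact sequence the kernel of $x$ on $\HH{\inf M}{M}$ contributes in degree $\inf M+1$. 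Alternatively, $\HH{\inf M}{M}\otimes$ survives via $\inf(K(x)\lotimes_RM)=\inf M$, which holds by \Cref{thm:widthInf} and \Cref{thm:width}: since $M\in\Dfbb$, $\inf(K\lotimes M)$ relates to the width. Thus $\sup(K(x)\lotimes_RM)\geq\sup M$ and $\inf(K(x)\lotimes_RM)=\inf M$, giving the amplitude inequality. Iterating over $x_1,\ldots,x_n$ gives $\amp K(\x)\geq\amp R$. Combining, $\amp K(\x)=\amp R$, so $\x$ is $R$-regular.

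The main obstacle is the last step, namely justifying $\amp K(\x)\geq\amp R$ cleanly. The lower bound $\inf(K(x)\lotimes_RM)=\inf M$ follows from the right-exactness of $\HH{\inf}{}$ and Nakayama, but $\sup$ could a priori grow by one, which is harmless since it only increases amplitude. If this is delicate, I would instead cite that the amplitude equality criterion in \cite[(5.6)]{ShaulCM} already encodes $\amp(K(\x)\lotimes_RR)\geq\amp R$ with equality exactly for regular sequences.
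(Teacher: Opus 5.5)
Your argument is correct and is essentially the paper's. \Cref{rmk:regular} gives $\amp(K(\x)\lotimes_RY)=\amp Y=\amp R$, \Cref{cor:iversen} with $X=K(\x)$ gives $\amp K(\x)\le\amp R$, and equality together with \Cref{rmk:regular} for $R$ yields $R$-regularity; note also that $Y\not\simeq0$ follows from $\amp Y=\amp R\ge0$, not from the definition. The only difference is that the paper cites \cite[Lemma 3.13]{minamoto} for $\amp K(\x)\ge\amp R$, whereas your Koszul-triangle and Nakayama argument ($\sup$ cannot drop, $\inf$ is unchanged, then iterate over $x_1,\ldots,x_n$) is a valid self-contained substitute.
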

\begin{proof}
Let $\x$ be a $Y$-regular sequence, then
\begin{align*}
\amp R&=\amp Y\\
&=\amp K(\x)\lotimes_RY\\
&\geq\amp K(\x)
\end{align*}
where the first equality holds by our hypothesis, the second follows from \Cref{rmk:regular} and the inequality from \Cref{cor:iversen}. The reverse inequality follows from \cite[Lemma 3.13]{minamoto} and therefore $\amp K(\x)=\amp R$, which implies that $\x$ is also an $R$-regular sequence by another application of \Cref{rmk:regular}.
\end{proof}

As a final application of the results developed in this section, we show that the DG version of Bass' conjecture, known as Minamoto's conjecture or Shaul's conjecture, holds for DG-modules with finitely generated homology, which was already proved in \cite[Proposition 4.5]{yang}, and for derived $\m$-complete DG-modules. We recall that a DG-ring $R$ is called \emph{Cohen-Macaulay} if $\amp R=\amp\RGam[\m]{(R)}$.

\begin{corollary}\label{cor:bass}
The following are equivalent
\begin{enumerate}
\item $R$ is Cohen-Macaulay.
\item There is a $Y\in\Dfb$ with $\injdim_RY<\infty$ and $\amp Y=\amp R$.
\item There is a derived $\m$-complete DG $R$-module $Y\in\Db$ with $\injdim_RY<\infty$ and $\amp Y=\amp R$.
\end{enumerate}
\end{corollary}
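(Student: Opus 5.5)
We prove the cycle of implications (1)$\Rightarrow$(2), (1)$\Rightarrow$(3), (2)$\Rightarrow$(1) and (3)$\Rightarrow$(1).

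\emph{Existence when $R$ is Cohen--Macaulay.} For (1)$\Rightarrow$(2) we use \cite[Theorem~5.22]{ShaulCM}. It says that when $R$ is Cohen--Macaulay and has a dualizing DG-module, that module can be chosen in $\Dfb$ with amplitude $\amp R$. In general $R$ need not have a dualizing DG-module, so we handle this case with the completion; if needed we simply cite \cite[Theorem~5.22]{ShaulCM} and \cite[Proposition~4.5]{yang} for (1)$\Leftrightarrow$(2), as the introduction indicates. For (1)$\Rightarrow$(3) we take $Y=\RHom_R(\wh R,D)$, or more directly the restriction of a normalized dualizing DG-module $D$ of $\wh R$; this $D$ exists by \cite[Proposition~7.21]{injective}. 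The argument then has four parts.
\begin{itemize}
\item $\wh R$ is Cohen--Macaulay. This holds because completion preserves $\amp R$ (\Cref{prp:ampCompletion}) and preserves local cohomology amplitude.
\item $\amp D=\amp\wh R=\amp R$, by \cite[Theorem~5.22]{ShaulCM} applied over $\wh R$.
\item $D$ is derived $\m$-complete as an $R$-module. It is derived $\wh\m$-complete over $\wh R$ because it lies in $\Dfb[\wh R]$, and restriction of scalars is compatible with completion by \cite[Proposition~5.5]{ShaulCompletion}.
\item $D$ has finite injective dimension over $R$. This follows from adjunction $\RHom_R(N,D)\simeq\RHom_{\wh R}(N\lotimes_R\wh R,D)$ and $\flatdim_R\wh R=0$.
\end{itemize}

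\emph{Cohen--Macaulayness from existence.} This is the substantive direction, and it should follow from \Cref{thm:4.7dual} applied with $X=R$. Let $Y$ be as in (2) or (3), with $Y\not\simeq 0$ since its amplitude is finite. We first check that $\width_RY<\infty$. By \Cref{thm:widthInf}, $\width_RY=\inf Y$ in both cases: $Y\in\Dfb$, or $Y$ is derived $\m$-complete. So the width is finite. \Cref{thm:4.7dual} with $X=R\in\Dfba$ then gives
\[
\amp\LLam[\m]{(Y)}\geq\amp\RGam[\m]{(R)}.
\]
In case (3), $\LLam[\m]{(Y)}\simeq Y$. In case (2), $\amp\LLam[\m]{(Y)}=\amp Y$ by \Cref{prp:ampCompletion}. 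Either way
\[
\amp R=\amp Y\geq\amp\RGam[\m]{(R)}.
\]

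\emph{The reverse inequality.} It remains to show $\amp\RGam[\m]{(R)}\geq\amp R$, which holds for every $R$. Here $\sup\RGam[\m]{(R)}=\ld R$, which is at least $\dim\HR=\ld$ of $\H_0$ and hence at least $\sup R$-related bounds, while $\inf\RGam[\m]{(R)}=-\depth R\le \inf R$-type bounds. These are given by \cite[Theorem~2.15 / Proposition~3.3]{ShaulCM}, which state $\amp\RGam[\m]{(R)}\geq\amp R$. Combining the two inequalities gives equality, i.e., $R$ is Cohen--Macaulay.

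The main obstacle is the existence direction (1)$\Rightarrow$(3) (and (2)) when $R$ lacks a dualizing DG-module. Transferring the dualizing DG-module of $\wh R$ back to $R$ requires checking injective dimension and derived completeness carefully. I would first try the restriction of $D$ along $R\to\wh R$, using flatness of $\wh R$ as above.
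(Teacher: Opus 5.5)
\paragraph{Verdict.} Your proof is correct. It agrees with the paper for (2),(3)$\Rightarrow$(1) and for (1)$\Rightarrow$(2), and it takes a genuinely different route for (1)$\Rightarrow$(3). Two of your justifications along the way need correcting (second paragraph).

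\paragraph{Comparison for (1)$\Rightarrow$(3).} For (2),(3)$\Rightarrow$(1) you do exactly what the paper does: finite width from \Cref{thm:widthInf}, then \Cref{thm:4.7dual} with $X=R$, then $\amp\LLam{(Y)}=\amp Y$ in both cases. As in the paper, (1)$\Rightarrow$(2) is a citation of \cite[Theorem~5.22]{ShaulCM}. For (1)$\Rightarrow$(3), the paper takes $Y=\RHom_R(\RGam{(R)},E)$ and verifies everything over $R$ itself:
\begin{itemize}
\item $\injdim_RY<\infty$ by \cite[Lemma~7.12]{ShaulFinDim}, because $\flatdim_R\RGam{(R)}<\infty$;
\item $\amp Y=\amp\RGam{(R)}=\amp R$ by Matlis duality and the Cohen--Macaulay hypothesis;
\item $\LLam{(Y)}\simeq Y$ by Greenlees--May duality together with $\RGam{(R)}\lotimes_R\RGam{(R)}\simeq\RGam{(R)}$.
\end{itemize}
You instead restrict a normalized dualizing DG-module $D$ of $\wh R$. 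Your four checks are sound. For instance, $\injdim_RD\le\injdim_{\wh R}D$ follows from the adjunction, since $N\lotimes_R\wh R\in\Db[\wh R]$ with $\sup(N\lotimes_R\wh R)\le\sup N$ when $\flatdim_R\wh R=0$. However, your route needs more machinery: existence of $D$, transfer of the Cohen--Macaulay property to $\wh R$, local duality to get $\amp D=\amp\RGam[\wh\m]{(\wh R)}$, and compatibility of completion with restriction of scalars. One can check, using local duality over $\wh R$, that your $D$ is isomorphic in $\D(R)$ to the paper's $Y$. The ingredients are $\RGam[\wh\m]{(D)}\simeq E_{\wh R}$, the fact that $E_{\wh R}$ restricts to $E$, and $\RGam[\wh\m]{(\wh R)}\simeq\RGam{(R)}\lotimes_R\wh R$. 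So both routes build the same module. The paper's description makes the obstacle you flag, the possible lack of a dualizing DG-module over $R$, disappear, since $\RGam{(R)}$ and $E$ always exist.

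\paragraph{Two corrections.}
\begin{itemize}
\item The completion idea you float for (2) cannot work. The homology of $D$ is finitely generated over $\wh{\HR}$, not over $\HR$, so in general $D\notin\Dfb$. The citation is therefore genuinely needed for (1)$\Rightarrow$(2).
\item Your justification of $\amp\RGam{(R)}\ge\amp R$ is not an argument. In the homological grading used here, $\inf\RGam{(R)}=-\ld R=-\dim\HR$ and $\sup\RGam{(R)}=-\depth R$; you have these swapped. The inequality therefore reads $\depth R\le\dim\HR-\sup R$. This is the DG analogue of ``depth $\le$ dimension'' and does not follow from $\ld R\ge\dim\HR$. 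The result you need is \cite[Theorem~4.1]{ShaulCM}, which is what the paper cites. Proposition~3.3 there only identifies $\sup\RGam{(R)}$ with $-\depth R$.
\end{itemize}
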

\begin{proof}
$(1)\Rightarrow(2)$. This is proved in \cite[Theorem 5.22(1)]{ShaulCM}.

$(1)\Rightarrow(3)$. Let $Y=\RHom_R(\RGam[\m]{(R)},E)$. Since $\flatdim_R\RGam[\m]{(R)}<\infty$, it follows from \cite[Lemma 7.12]{ShaulFinDim} that $\injdim_RY<\infty$. Moreover,
\[
\amp Y=\amp\RGam[\m]{(R)}=\amp R
\]
where the first equality follows from \cite[Theorem 4.10]{injective} and the second follows from the definition of a Cohen-Macaulay DG-ring. In addition, we have isomorphisms 
\begin{align*}
\LLam[\m]{(Y)}&\simeq\RHom_R(\RGam[\m]{(R)},\RHom_R(\RGam[\m]{(R)},E))\\
&\simeq\RHom_R(\RGam[\m]{(R)}\lotimes_R\RGam[\m]{(R)},E)\\
&\simeq Y.
\end{align*}
The first isomorphism follows from Greenlees-May duality \cite[Proposition 2.11]{ShaulCompletion}. Thus, we have that $Y$ is derived $\m$-complete.

$(2),(3)\Rightarrow(1)$. We first notice that $\width_RY<\infty$ by \Cref{thm:widthInf}. By taking $X=R$ in \Cref{thm:4.7dual} we deduce that 
\[
\amp\LLam[\m]{(Y)}\geq\amp\RGam[\m]{(R)}.
\]
If $Y\in\Dfb$ then $\amp\LLam[\m]{(Y)}=\amp Y$ by \Cref{prp:ampCompletion}, and if $Y$ is derived $\m$-complete, then trivially $\amp\LLam[\m]{(Y)}=\amp Y$. Therefore the hypotheses yield $\amp R\geq\amp\RGam[\m]{(R)}$. The reverse inequality always holds, see \cite[Theorem 4.1]{ShaulCM}. Therefore $\amp R=\amp\RGam[\m]{(R)}$, i.e. $R$ is Cohen-Macaulay.
\end{proof}

\bibliographystyle{amsplain}
\bibliography{biblio}

@Book{LarsBook,
 Author = {Christensen, Lars Winther and Foxby, Hans-Bj{\o}rn and Holm, Henrik},
 Title = {Derived category methods in commutative algebra},
 FSeries = {Springer Monographs in Mathematics},
 Series = {Springer Monogr. Math.},
 ISSN = {1439-7382},
 ISBN = {978-3-031-77452-2; 978-3-031-77453-9},
 Year = {2024},
 Publisher = {Cham: Springer},
 Language = {English},
 DOI = {10.1007/978-3-031-77453-9},
 zbMATH = {7954534}
}

@article {ShaulCM,
    AUTHOR = {Shaul, Liran},
     TITLE = {The {C}ohen-{M}acaulay property in derived commutative
              algebra},
   JOURNAL = {Trans. Amer. Math. Soc.},
  FJOURNAL = {Transactions of the American Mathematical Society},
    VOLUME = {373},
      YEAR = {2020},
    NUMBER = {9},
     PAGES = {6095--6138},
      ISSN = {0002-9947,1088-6850},
   MRCLASS = {13C14 (13D45 16E35 16E45)},
  MRNUMBER = {4155173},
MRREVIEWER = {P.\ Schenzel},
       DOI = {10.1090/tran/8099},
       URL = {https://doi.org/10.1090/tran/8099},
}

@article {ShaulCompletion,
    AUTHOR = {Shaul, Liran},
     TITLE = {Completion and torsion over commutative {DG} rings},
   JOURNAL = {Israel J. Math.},
  FJOURNAL = {Israel Journal of Mathematics},
    VOLUME = {232},
      YEAR = {2019},
    NUMBER = {2},
     PAGES = {531--588},
      ISSN = {0021-2172,1565-8511},
   MRCLASS = {13D03 (13B35 16E45)},
  MRNUMBER = {3990952},
MRREVIEWER = {Keri\ Sather-Wagstaff},
       DOI = {10.1007/s11856-019-1866-6},
       URL = {https://doi.org/10.1007/s11856-019-1866-6},
}

@article {ShaulKoszul,
    AUTHOR = {Shaul, Liran},
     TITLE = {Koszul complexes over {C}ohen-{M}acaulay rings},
   JOURNAL = {Adv. Math.},
  FJOURNAL = {Advances in Mathematics},
    VOLUME = {386},
      YEAR = {2021},
     PAGES = {Paper No. 107806, 35},
      ISSN = {0001-8708,1090-2082},
   MRCLASS = {13H10 (13D09 16E45)},
  MRNUMBER = {4266748},
MRREVIEWER = {Xiaojun\ Chen},
       DOI = {10.1016/j.aim.2021.107806},
       URL = {https://doi.org/10.1016/j.aim.2021.107806},
}

@article {Zach,
    AUTHOR = {Nason, Zachary},
     TITLE = {Maximal {C}ohen-{M}acaulay {DG}-complexes},
   JOURNAL = {J. Pure Appl. Algebra},
  FJOURNAL = {Journal of Pure and Applied Algebra},
    VOLUME = {230},
      YEAR = {2026},
    NUMBER = {2},
     PAGES = {Paper No. 108193, 23},
      ISSN = {0022-4049,1873-1376},
   MRCLASS = {13C14 (13D09 16E35 16E45 16E65)},
  MRNUMBER = {5029991},
       DOI = {10.1016/j.jpaa.2026.108193},
       URL = {https://doi.org/10.1016/j.jpaa.2026.108193},
}

@article {ShaulFinDim,
    AUTHOR = {Bird, Isaac and Shaul, Liran and Sridhar, Prashanth and
              Williamson, Jordan},
     TITLE = {Finitistic dimensions over commutative {DG}-rings},
   JOURNAL = {Math. Z.},
  FJOURNAL = {Mathematische Zeitschrift},
    VOLUME = {309},
      YEAR = {2025},
    NUMBER = {1},
     PAGES = {Paper No. 3, 29},
      ISSN = {0025-5874,1432-1823},
   MRCLASS = {16E45 (13D03 13D09 18Gxx)},
  MRNUMBER = {4827125},
MRREVIEWER = {Jian\ Liu},
       DOI = {10.1007/s00209-024-03617-2},
       URL = {https://doi.org/10.1007/s00209-024-03617-2},
}

@article{amp,
  title={Amplitude inequalities for local (co)homology},
  author={Christensen, Lars Winther and Ferraro, Luigi and Holm, Henrik},
  journal={https://arxiv.org/abs/2606.02850},
  volume={},
  number={},
  pages={},
  year={},
  publisher={}
}

@article{yang, title={Intersection Theorem for {DG}-modules over non-positive {DG}-rings}, DOI={10.1017/prm.2025.10112}, journal={Proceedings of the Royal Society of Edinburgh: Section A Mathematics}, author={Yang, Xiaoyan}, year={2025}, pages={1–18}}

@article{minamoto,
  title={On commutative differential graded algebras},
  author={Minamoto, Hiroyuki },
  journal={https://arxiv.org/abs/1903.07514},
  volume={},
  number={},
  pages={},
  year={},
  publisher={}
}

@article {width,
    AUTHOR = {Rao, Yanping and Liu, Zhongkui and Yang, Xiaoyan and Chen,
              Wenjing},
     TITLE = {Depth and width for unbounded {DG}-modules},
   JOURNAL = {Algebra Colloq.},
  FJOURNAL = {Algebra Colloquium},
    VOLUME = {30},
      YEAR = {2023},
    NUMBER = {1},
     PAGES = {61--72},
      ISSN = {1005-3867,0219-1733},
   MRCLASS = {13C15 (13D30 18G10)},
  MRNUMBER = {4562914},
MRREVIEWER = {Liang\ Zhao},
       DOI = {10.1142/S1005386723000068},
       URL = {https://doi.org/10.1142/S1005386723000068},
}

@article {injective,
    AUTHOR = {Shaul, Liran},
     TITLE = {Injective {DG}-modules over non-positive {DG}-rings},
   JOURNAL = {J. Algebra},
  FJOURNAL = {Journal of Algebra},
    VOLUME = {515},
      YEAR = {2018},
     PAGES = {102--156},
      ISSN = {0021-8693,1090-266X},
   MRCLASS = {16D50 (13C11 13D45 16E45)},
  MRNUMBER = {3859962},
MRREVIEWER = {C.\ N\u ast\u asescu},
       DOI = {10.1016/j.jalgebra.2018.07.040},
       URL = {https://doi.org/10.1016/j.jalgebra.2018.07.040},
}

@book {strooker,
    AUTHOR = {Strooker, Jan R.},
     TITLE = {Homological questions in local algebra},
    SERIES = {London Mathematical Society Lecture Note Series},
    VOLUME = {145},
 PUBLISHER = {Cambridge University Press, Cambridge},
      YEAR = {1990},
     PAGES = {xiv+308},
      ISBN = {0-521-31526-3},
   MRCLASS = {13Hxx (13D03 13D40)},
  MRNUMBER = {1074178},
MRREVIEWER = {W.\ V.\ Vasconcelos},
       DOI = {10.1017/CBO9780511629242},
       URL = {https://doi.org/10.1017/CBO9780511629242},
}

@article{FN,
  title={Intersection theorems over {DG}-rings revisited},
  author={Ferraro, Luigi and Nason, Zach},
  journal={https://arxiv.org/abs/2606.32031},
  volume={},
  number={},
  pages={},
  year={},
  publisher={}
}

@article{ShaulHomDim,
	title = {Homological {Dimensions} of {Local} ({Co})homology {Over} {Commutative} {DG}-rings},
	volume = {61},
	copyright = {https://www.cambridge.org/core/terms},
	issn = {0008-4395, 1496-4287},
	url = {https://www.cambridge.org/core/product/identifier/S0008439500024279/type/journal_article},
	doi = {10.4153/CMB-2017-054-1},
	number = {4},
	urldate = {2026-06-15},
	journal = {Canadian Mathematical Bulletin},
	author = {Shaul, Liran},
	month = dec,
	year = {2018},
	pages = {865--877},
}

@article {auslander,
    AUTHOR = {Nasehpour, Peyman},
     TITLE = {Auslander modules},
   JOURNAL = {Beitr. Algebra Geom.},
  FJOURNAL = {Beitr\"age zur Algebra und Geometrie. Contributions to Algebra
              and Geometry},
    VOLUME = {59},
      YEAR = {2018},
    NUMBER = {4},
     PAGES = {617--624},
      ISSN = {0138-4821,2191-0383},
   MRCLASS = {13C05 (13A15)},
  MRNUMBER = {3871097},
MRREVIEWER = {Alveera\ Mehdi},
       DOI = {10.1007/s13366-018-0378-6},
       URL = {https://doi.org/10.1007/s13366-018-0378-6},
}

@article{tilting,
  title={Duality and tilting for commutative {DG} rings},
  author={Yekutieli, Amnon},
  journal={https://arxiv.org/abs/1312.6411},
  volume={},
  number={},
  pages={},
  year={},
  publisher={}
}

@article {ShaulLifting,
    AUTHOR = {Shaul, Liran and Williamson, Jordan},
     TITLE = {Lifting (co)stratifications between tensor triangulated
              categories},
   JOURNAL = {Israel J. Math.},
  FJOURNAL = {Israel Journal of Mathematics},
    VOLUME = {261},
      YEAR = {2024},
    NUMBER = {1},
     PAGES = {249--280},
      ISSN = {0021-2172,1565-8511},
   MRCLASS = {18G80 (18G35 18M05)},
  MRNUMBER = {4776493},
       DOI = {10.1007/s11856-023-2578-5},
       URL = {https://doi.org/10.1007/s11856-023-2578-5},
}
\end{document}